\documentclass[11pt]{article}

\usepackage[utf8]{inputenc}
\usepackage[T1]{fontenc}
\usepackage{lmodern}
\usepackage{amsmath,amssymb,amsthm,mathtools}
\usepackage{mathrsfs}
\usepackage{bm}
\usepackage{enumitem}
\usepackage{natbib}
\usepackage{url}
\usepackage[hidelinks]{hyperref}
\usepackage[margin=1in]{geometry}

\numberwithin{equation}{section}

\newtheorem{theorem}{Theorem}[section]
\newtheorem{proposition}[theorem]{Proposition}
\newtheorem{lemma}[theorem]{Lemma}
\newtheorem{corollary}[theorem]{Corollary}
\newtheorem{definition}[theorem]{Definition}

\newcommand{\R}{\mathbb{R}}
\newcommand{\Pcal}{\mathcal{P}}
\newcommand{\Gcal}{\mathcal{G}}
\newcommand{\bx}{\bm{x}}

\newcommand{\Hess}{\operatorname{Hess}}
\newcommand{\gradS}{\operatorname{grad}_{\mathbb S^2}}
\newcommand{\dd}{\,\mathrm{d}}
\newcommand{\keywords}[1]{\par\medskip\noindent\textbf{Keywords:} #1\par}

\setcitestyle{authoryear,open={[},close={]},semicolon,aysep={,}}

\hypersetup{
  pdftitle={An Elegant Analytical Resolution of the Sprott--Zeraoulia Conjecture for Three-Dimensional Quadratic Differential Systems with Symmetric Jacobian Matrices},
  pdfauthor={Marcelo Messias and Rafael Paulino Silva},
  pdfkeywords={Sprott--Zeraoulia conjecture; symmetric Jacobian matrix; polynomial gradient flow; Lojasiewicz inequality; Poincare compactification}
}

\title{An Elegant Analytical Resolution of the\\
Sprott--Zeraoulia Conjecture for Three-Dimensional  \\
Quadratic Differential Systems
with Symmetric Jacobian Matrices}

\author{Marcelo Messias\thanks{Corresponding author: \texttt{marcelo.messias1@unesp.br}} \and
Rafael Paulino Silva\thanks{UNESP Postdoctoral Program; \texttt{paulino.silva@unesp.br}}\\[0.6em]
\small Department of Mathematics and Computer Science, School of Technology and Sciences,\\
\small S\~ao Paulo State University (UNESP), Rua Roberto Simonsen 305,\\
\small 19060-900 Presidente Prudente, S\~ao Paulo, Brazil}

\date{}

\begin{document}
\maketitle
\begin{abstract}
The Sprott--Zeraoulia conjecture, formulated on the basis of numerical
experiments, states that three-dimensional quadratic continuous-time
systems with symmetric Jacobian matrices cannot exhibit chaotic
behavior.  We give an analytical resolution of this conjecture in terms
of the principal precise notions of chaos used for continuous dynamical
systems.  The starting point is the classical, but decisive, fact that
symmetry of the Jacobian on $\R^3$ is equivalent to a global gradient
representation.  Hence every system in the conjectured class can be
written as $\dot{\bx}~=~\nabla V(\bx)$, where $V$ is a polynomial of degree
at most three, and the conjecture becomes a problem about cubic
polynomial gradient flows.

For bounded trajectories, the monotonicity property of gradient systems 
first confines every omega-limit set to a critical level of $V$.  When
the equilibria on that level are isolated, convergence follows from the
connectedness of the omega-limit set.  In the general case, which may
include curves or surfaces of equilibria, the \L{}ojasiewicz gradient
inequality implies finite length and convergence of every bounded
forward trajectory to a single equilibrium.  Consequently, compact
invariant sets support no nonstationary recurrence, positive-entropy
measure, topological transitivity, Smale horseshoe, or Devaney,
Auslander--Yorke, Li--Yorke, mean Li--Yorke, and distributional chaos.

Solutions escaping to infinity are treated through the Poincar\'e
compactification.  The induced flow on the sphere at infinity is the
spherical gradient flow generated by the highest homogeneous part of
$V$, and therefore cannot support chaotic limit dynamics either.
Finally, we connect the gradient formulation with earlier
Darboux-theoretic criteria: invariant algebraic surfaces satisfy
$\langle\nabla V,\nabla f\rangle=Kf$, and an invariant affine plane with
constant cofactor produces an exact tangential--normal decomposition of
the potential and the flow.
\end{abstract}

\keywords{Sprott--Zeraoulia conjecture; symmetric Jacobian matrix; polynomial gradient flow; \L{}ojasiewicz inequality; Poincar\'e compactification}


\section{Introduction}
\label{sec:introduction}

Three-dimensional quadratic differential systems occupy a central
position in nonlinear dynamics.  Their algebraic degree is low enough
to permit explicit analysis, but their phase portraits may contain
bifurcations, invariant manifolds, strange attractors, and chaotic
dynamics.  Against this background, Zeraoulia and Sprott formulated the
conjecture that a quadratic continuous-time system in $\R^3$ whose
Jacobian matrix is symmetric at every point cannot be chaotic
\cite{ZeraouliaSprott2014}.  The conjecture arose from numerical
experiments in the general $19$-parameter family of such systems.  
For this purpose, they randomly chose the parameter values and three initial conditions from a Gaussian distribution with mean zero and variance one, and computed the largest Lyapunov exponent for approximately $10^9$ (one billion) cases. They found that the vast majority of the solutions were unbounded (one or more variables exceeded 1000), whereas a small subset, of the order of one percent, was attracted to a stable equilibrium. Not a single case exhibited a significantly positive Lyapunov exponent (greater than 0.001). Nevertheless, a conclusive mathematical proof of this quite general conjecture has remained lacking. Note that the
symmetry of the linearization at each equilibrium implies that all
local eigenvalues are real, thereby excluding saddle-focus equilibria
and the classical Shilnikov mechanism.  This local observation,
however, does not by itself rule out other mechanisms of recurrent or
chaotic dynamics.

Several partial positive answers were subsequently obtained by
algebraic methods.  In \cite{MessiasSilva2018}, an invariant algebraic
surface with nonzero constant cofactor was used to confine limit sets
and establish nonchaotic behavior for a significant subclass of the
Sprott--Zeraoulia family.  Related criteria were developed for
polynomial jerk equations and Lorenz-like systems
\cite{MessiasSilva2020,MessiasSilva2022}, and the recent result of
\cite{SilvaMessias2026} allows several invariant algebraic surfaces
whose cofactors satisfy an appropriate linear relation.  These works
show that Darboux polynomials, invariant surfaces, and exponential
factors may provide a geometric skeleton that prevents or constrains
chaotic dynamics.  They do not, however, cover every quadratic system
with symmetric Jacobian.

The present paper starts from a different structural observation. Take the quadratic continuous-time differential systems with symmetric Jacobian matrix denoted by $\dot x=F(x)$. On
the simply connected space $\R^3$, the condition
\begin{equation}
 DF(\bx)=DF(\bx)^T
 \label{eq:intro-symmetric}
\end{equation}
is equivalent to the existence of a globally defined scalar potential
$V$ such that
\begin{equation}
 F=\nabla V.
 \label{eq:intro-gradient}
\end{equation}
For a quadratic polynomial field, the potential has degree at most
three.  Thus the class introduced by Zeraoulia and Sprott is precisely
the class of cubic polynomial gradient flows in $\R^3$.

The implication \eqref{eq:intro-symmetric}--\eqref{eq:intro-gradient}
is classical.  We emphasize that the contribution of this paper is not
the Poincar\'e lemma itself, nor the isolated observation that gradient
flows possess a monotone potential.  Rather, the contribution is to
recognize that this classical structure applies to the entire
conjectured family and to develop all the additional arguments needed
to convert a numerically motivated and informally stated conjecture
into a precise global theorem.  Four issues must be addressed.  First,
the term \emph{chaos} has no single universal definition, so the
conclusion must be formulated and proved separately for the principal
topological and measure-theoretic notions used in the literature.
Second, monotonicity alone only places an omega-limit set inside a
critical level; it does not immediately imply convergence when the
critical set contains continua of equilibria.  Third, bounded-orbit
arguments do not treat solutions that escape to infinity, possibly in
finite original time.  Fourth, a satisfactory resolution should
explain how the global gradient mechanism is related to the invariant
algebraic structures used in the earlier partial results.

The gradient identity
\begin{equation}
 \frac{\dd}{\dd t}V(\bx(t))
 =\langle\nabla V(\bx(t)),\dot{\bx}(t)\rangle
 =\|\nabla V(\bx(t))\|^2
 \geq 0
 \label{eq:intro-monotonicity}
\end{equation}
is the basic mechanism behind the proof.  It immediately excludes
nonconstant periodic orbits, nonstationary recurrent points,
homoclinic trajectories, and heteroclinic cycles.  For a bounded
forward trajectory, it implies that the omega-limit set is contained in
one critical level of $V$.  If the equilibria on that level are
isolated, connectedness of the omega-limit set yields convergence to a
single equilibrium.  The nonisolated case is subtler and is essential
for a complete treatment of polynomial systems, because their
equilibrium sets may contain algebraic curves or surfaces.  We use the
\L{}ojasiewicz gradient inequality to prove that every bounded forward
trajectory has finite length and converges to one equilibrium, without
any isolation or hyperbolicity assumption.

This convergence theorem allows the original conjecture to be replaced
by explicit dynamical statements.  On every compact invariant set, the
recurrent and nonwandering core is stationary; every ergodic invariant
probability measure is a Dirac mass at an equilibrium; and every time
map has zero metric and topological entropy.  We consequently exclude
topological transitivity on nontrivial compact invariant sets, Devaney
and Auslander--Yorke chaos, Li--Yorke and mean Li--Yorke chaos,
distributional chaos, positive-entropy symbolic dynamics, and Smale
horseshoes.  In particular, no compact invariant set can be a
nontrivial transitive strange attractor.  This formulation avoids the
ambiguity of claiming merely that the systems are ``not chaotic'' and
states exactly which standard mechanisms and definitions are ruled
out.

To treat unbounded solutions, we use the Poincar\'e compactification.
Writing the potential as
\[
 V=V_3+V_2+V_1+V_0,
\]
where $V_j$ is homogeneous of degree $j$, the quadratic homogeneous part
of the field is $F_2=\nabla V_3$.  After the standard desingularizing
time change, the vector field induced on the sphere at infinity is
\begin{equation}
 u'=\operatorname{grad}_{\mathbb S^2}
 \bigl(V_3|_{\mathbb S^2}\bigr)(u).
 \label{eq:intro-spherical-gradient}
\end{equation}
Thus the dynamics at infinity is itself an analytic gradient flow.
Its trajectories converge to critical points of the spherical
potential, and its compact invariant sets cannot support recurrence,
positive entropy, horseshoes, or any of the chaos notions considered in
the finite part.  Hence escape to infinity does not provide a hidden
chaotic alternative to the bounded dynamics.

The potential formulation also clarifies and strengthens the
Darboux-theoretic approach.  If $f=0$ is an invariant algebraic surface
with cofactor $K$, then
\begin{equation}
 \langle\nabla V,\nabla f\rangle=Kf.
 \label{eq:intro-darboux}
\end{equation}
On the regular part of $f=0$, the restricted vector field is the
intrinsic gradient of $V|_{\{f=0\}}$.  At a regular equilibrium, the
value of the cofactor is the normal eigenvalue of the Hessian.  In the
important case of an invariant affine plane with constant cofactor, an
orthogonal affine change of coordinates gives the exact decomposition
\begin{equation}
 V(u,v,w)=W(u,v)+\frac{k}{2}w^2,
 \qquad
 (\dot u,\dot v,\dot w)=(W_u,W_v,kw).
 \label{eq:intro-plane-splitting}
\end{equation}
Therefore the invariant plane found by the algebraic criterion is not
an accidental barrier: it is the manifestation of an exact separation
between a planar gradient flow and a linear transverse direction.  The
leading homogeneous part of a Darboux polynomial further induces an
invariant algebraic trace on the sphere at infinity.

The main contributions may therefore be summarized as follows:
\begin{enumerate}[label=\textup{(\roman*)}]
\item the complete reformulation of the Sprott--Zeraoulia class as
      cubic polynomial gradient flows;
\item convergence of every bounded forward trajectory to one
      equilibrium, including nonisolated critical sets through the
      \L{}ojasiewicz gradient inequality;
\item systematic exclusion of the principal compact-invariant notions
      and mechanisms of chaos, together with the classification of
      invariant ergodic measures and the proof of zero entropy;
\item a gradient description of the Poincar\'e sphere at infinity,
      excluding chaotic limit dynamics for escaping solutions; and
\item a structural connection with Darboux theory, including the normal
      interpretation of the cofactor and the exact splitting associated
      with an invariant affine plane.
\end{enumerate}
To the best of our knowledge, these ingredients have not previously
been assembled into a complete analytical resolution of the
Sprott--Zeraoulia conjecture.

The paper is organized as follows.  Section~2 describes the general
quadratic family and the symmetry constraints.  Sections~3 and~4 prove
the global gradient representation and develop the cubic-potential
parametrization.  Section~5 establishes convergence of bounded
trajectories and the finite-space trajectory dichotomy.  Section~6
excludes the standard compact-invariant notions of chaos.  Section~7
studies the Poincar\'e compactification and the dynamics at infinity,
thereby completing the dynamical analysis of bounded and escaping
solutions.  Section~8 then connects the gradient formulation with
invariant algebraic surfaces and the Darboux criteria of
\cite{MessiasSilva2018,MessiasSilva2020,MessiasSilva2022,SilvaMessias2026}.
Section~9 summarizes the conclusions and discusses extensions to
arbitrary polynomial degree and higher dimension.


\section{General Quadratic Polynomial Vector Fields}
\label{sec:general-quadratic}

Throughout the manuscript, a \emph{quadratic polynomial vector field}
means a vector field whose components have degree at most two.  We shall
say that it has \emph{exact degree two} when at least one component has a
nonzero quadratic part.

Consider
\begin{equation}
\dot{\bx}=F(\bx),
\qquad
\bx=(x,y,z)\in\R^3,
\qquad
F=(P,Q,R),
\label{eq:general-system}
\end{equation}
where
\begin{align*}
P(x,y,z)
&=a_0+a_1x+a_2y+a_3z
  +a_4x^2+a_5xy+a_6xz+a_7y^2+a_8yz+a_9z^2,\\
Q(x,y,z)
&=b_0+b_1x+b_2y+b_3z
  +b_4x^2+b_5xy+b_6xz+b_7y^2+b_8yz+b_9z^2,\\
R(x,y,z)
&=c_0+c_1x+c_2y+c_3z
  +c_4x^2+c_5xy+c_6xz+c_7y^2+c_8yz+c_9z^2.
\end{align*}
Thus the vector space of all such systems has dimension $30$ (number of real coefficients in polynomials $P, Q$ and $R$ ).

The Jacobian matrix is
\[
DF(x,y,z)=
\begin{pmatrix}
P_x&P_y&P_z\\
Q_x&Q_y&Q_z\\
R_x&R_y&R_z
\end{pmatrix},
\]
and its entries are affine functions of $(x,y,z)$.

\begin{proposition}\label{prop:constraints}
The Jacobian matrix $DF$ is symmetric for every $(x,y,z)\in\R^3$ if
and only if the following eleven independent relations hold:
\begin{align}
 b_1&=a_2, & b_4&=\frac{a_5}{2}, & b_5&=2a_7,
 & b_6&=a_8, \label{eq:constraints1}\\
 c_1&=a_3, & c_4&=\frac{a_6}{2}, & c_5&=a_8,
 & c_6&=2a_9, \label{eq:constraints2}\\
 c_2&=b_3, & c_7&=\frac{b_8}{2}, & c_8&=2b_9.
 \label{eq:constraints3}
\end{align}
Equivalently, every quadratic vector field with symmetric Jacobian can
be written in the $19$-parameter form
\begin{equation}
\left\{
\begin{aligned}
\dot{x}={}&a_0+a_1x+a_2y+a_3z
+a_4x^2+a_5xy+a_6xz+a_7y^2+a_8yz+a_9z^2,\\
\dot{y}={}&b_0+a_2x+b_2y+b_3z
+\frac{a_5}{2}x^2+2a_7xy+a_8xz
+b_7y^2+b_8yz+b_9z^2,\\
\dot{z}={}&c_0+a_3x+b_3y+c_3z
+\frac{a_6}{2}x^2+a_8xy+2a_9xz
+\frac{b_8}{2}y^2+2b_9yz+c_9z^2.
\end{aligned}
\right.
\label{eq:19parameter-system}
\end{equation}
\end{proposition}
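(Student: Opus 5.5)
Symmetry of $DF$ at every point amounts to the three identities $P_y=Q_x$, $P_z=R_x$, $Q_z=R_y$ holding identically on $\R^3$. Each entry of $DF$ is an affine polynomial in $(x,y,z)$. A polynomial identity holds for all real arguments if and only if all coefficients agree. So the plan is to write each of the three differences as an affine polynomial and set its four coefficients (constant, $x$, $y$, $z$) equal to zero. This gives $3\times 4=12$ scalar linear equations in the $30$ coefficients. The same computation gives both directions at once: the constraints are necessary because a vanishing affine function has vanishing coefficients, and sufficient because the equations reconstruct the identities term by term.

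Concretely I will compute
\[
P_y=a_2+a_5x+2a_7y+a_8z,\qquad Q_x=b_1+2b_4x+b_5y+b_6z,
\]
and matching coefficients yields $b_1=a_2$, $b_4=a_5/2$, $b_5=2a_7$, $b_6=a_8$, which is \eqref{eq:constraints1}. Similarly,
\[
P_z=a_3+a_6x+a_8y+2a_9z,\qquad R_x=c_1+2c_4x+c_5y+c_6z,
\]
gives $c_1=a_3$, $c_4=a_6/2$, $c_5=a_8$, $c_6=2a_9$, which is \eqref{eq:constraints2}. Finally,
\[
Q_z=b_3+b_6x+b_8y+2b_9z,\qquad R_y=c_2+c_5x+2c_7y+c_8z,
\]
gives $c_2=b_3$, $c_5=b_6$, $c_7=b_8/2$, $c_8=2b_9$.

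The one point needing care is the count: twelve equations are produced, but only eleven are listed. The $x$-coefficient equation of the third pair, $c_5=b_6$, follows from $c_5=a_8$ and $b_6=a_8$ already obtained, so it is redundant. This reflects the integrability condition behind $\partial_x\partial_y\partial_z$ of the cubic potential. Once it is dropped, I will argue independence of the remaining eleven directly. Each of them expresses a different coefficient, namely one of $b_1,b_4,b_5,b_6,c_1,c_4,c_5,c_6,c_2,c_7,c_8$, in terms of coefficients outside that set. The system is therefore already solved for eleven distinct unknowns and has rank eleven. The solution space then has dimension $30-11=19$.

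Substituting these eleven expressions into $Q$ and $R$ gives \eqref{eq:19parameter-system} verbatim, with free parameters $a_0,\dots,a_9,b_0,b_2,b_3,b_7,b_8,b_9,c_0,c_3,c_9$, nineteen in all. The main obstacle is only bookkeeping, namely noticing and justifying the single redundant equation. Apart from that the proof is a routine coefficient comparison.
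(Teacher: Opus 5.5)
Your proposal is correct and follows the paper's proof essentially verbatim: it computes the three pairs of off-diagonal partials, matches the coefficients of $1,x,y,z$, discards the redundant relation $c_5=b_6$ (implied by $b_6=a_8=c_5$), and substitutes into $Q$ and $R$. Your pivot argument for the independence of the remaining eleven relations (each one solves for a distinct coefficient in terms of free ones, so the rank is eleven) is a small but welcome addition, since the paper only asserts this independence.
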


\begin{proof}
The off-diagonal symmetry conditions are
\[
P_y=Q_x,\qquad P_z=R_x,\qquad Q_z=R_y.
\]
Now
\begin{align*}
P_y&=a_2+a_5x+2a_7y+a_8z,
&Q_x&=b_1+2b_4x+b_5y+b_6z,\\
P_z&=a_3+a_6x+a_8y+2a_9z,
&R_x&=c_1+2c_4x+c_5y+c_6z,\\
Q_z&=b_3+b_6x+b_8y+2b_9z,
&R_y&=c_2+c_5x+2c_7y+c_8z.
\end{align*}
Comparison of the coefficients of $1,x,y,z$ gives
\eqref{eq:constraints1}--\eqref{eq:constraints3}.  Notice that the
relation $b_6=c_5$ obtained from the third identity is already implied
by $b_6=a_8=c_5$; hence there are eleven, rather than twelve,
independent constraints.  Substitution of these relations into
\eqref{eq:general-system} yields \eqref{eq:19parameter-system}.
\end{proof}

The free coefficients in \eqref{eq:19parameter-system} are
\[
(a_0,\ldots,a_9),\quad
(b_0,b_2,b_3,b_7,b_8,b_9),\quad
(c_0,c_3,c_9),
\]
which gives $10+6+3=19$ parameters.  Therefore the symmetry condition
has codimension $30-19=11$ in the vector space of all quadratic
polynomial vector fields.  This agrees with the parametrization used in
the original formulation of the conjecture \cite{ZeraouliaSprott2014}.


\section{Symmetric Jacobians and Global Gradient Structure}
\label{sec:gradient-structure}

The coefficient calculation of the previous section has a global
geometric interpretation.  We first recall the classical equivalence
between closed one-forms and gradient fields on simply connected
domains; see, for example, \cite{Lee2013}.

\begin{theorem}\label{thm:symmetric-gradient}
Let $\Omega\subset\R^3$ be an open and simply connected set, and let
$F=(P,Q,R)\colon\Omega\to\R^3$ be of class $C^1$.  The following
statements are equivalent:
\begin{enumerate}[label=\textup{(\roman*)}]
\item $DF(\bx)=DF(\bx)^{T}$ for every $\bx\in\Omega$;
\item $\nabla\times F=0$ in $\Omega$;
\item there exists a function $V\in C^2(\Omega)$ such that
      $F=\nabla V$.
\end{enumerate}
The potential $V$ is unique up to an additive constant on each connected
component of $\Omega$.
\end{theorem}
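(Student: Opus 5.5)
The plan is to prove the cycle of implications (i)$\Leftrightarrow$(ii), (iii)$\Rightarrow$(i), and (ii)$\Rightarrow$(iii), and then establish uniqueness separately.

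The equivalence (i)$\Leftrightarrow$(ii) is pure bookkeeping. The three off-diagonal symmetry conditions $P_y=Q_x$, $P_z=R_x$, $Q_z=R_y$ are exactly the vanishing of the three components $R_y-Q_z$, $P_z-R_x$, $Q_x-P_y$ of $\nabla\times F$. For (iii)$\Rightarrow$(i), if $F=\nabla V$ with $V\in C^2(\Omega)$, then $DF=\Hess V$, and this matrix is symmetric by Schwarz's theorem on the equality of mixed second partials of a $C^2$ function.

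The substantive direction is (ii)$\Rightarrow$(iii). I will use the language of differential forms. Set $\omega=P\,dx+Q\,dy+R\,dz$, a $C^1$ one-form on $\Omega$. Condition (ii) says precisely that $d\omega=0$. The key steps are as follows.
\begin{enumerate}[label=\textup{(\alph*)}]
\item Show that $\oint_\gamma\omega=0$ for every piecewise $C^1$ closed curve $\gamma$ in $\Omega$.
\item Fix a base point $\bx_0$ in each connected component of $\Omega$, and define $V(\bx)=\int_{\gamma}\omega$, where $\gamma$ is any piecewise $C^1$ path in $\Omega$ from $\bx_0$ to $\bx$. Such paths exist because open connected sets are polygonally connected, and step (a) makes $V$ independent of the choice of path.
\item Compute $\partial_x V(\bx)=\lim_{h\to0}\frac1h\int_0^h P(\bx+se_1)\,ds=P(\bx)$ by appending a short straight segment inside a small ball, and treat the $y$ and $z$ derivatives the same way. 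This gives $\nabla V=F$. Since $F\in C^1$, it follows that $V\in C^2$.
\end{enumerate}

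I expect step (a) to be the main obstacle, since it is the only place where simple connectivity enters. First I would handle it locally: on a ball or any star-shaped subset, the explicit formula
\[
 V(\bx)=\int_0^1\langle F(\bx_0+t(\bx-\bx_0)),\bx-\bx_0\rangle\dd t
\]
can be differentiated under the integral sign, and using $\nabla\times F=0$ one gets $\nabla V=F$. This is the local Poincar\'e lemma.

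To globalize, I would show that $\int_\gamma\omega$ is invariant under homotopy of closed curves. Given a homotopy $H\colon[0,1]^2\to\Omega$ between $\gamma$ and a constant loop, Lebesgue's number lemma lets me subdivide the square into small rectangles, each mapped into a ball where $\omega$ is exact. The integral over the image of each small rectangle's boundary then vanishes, and the interior edges cancel in the sum. Intermediate curves $H(\cdot,s)$ may fail to be $C^1$; I will handle this either by working with the local primitives directly (summing the differences of the local potentials at the grid vertices) or by piecewise linear approximation. This reduces everything to the statement that $\int_\gamma\omega=0$ for null-homotopic $\gamma$, which is all closed curves by hypothesis. Alternatively, I would cite the de Rham statement $H^1_{dR}(\Omega)=0$ for simply connected $\Omega$, as in \cite{Lee2013}, and present the above as its content.

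Uniqueness is then straightforward. If $\nabla V_1=\nabla V_2$, then $\nabla(V_1-V_2)=0$, so $V_1-V_2$ is locally constant and hence constant on each connected component of $\Omega$. For the application in the paper $\Omega=\R^3$ is star-shaped, so the explicit radial-integral formula alone already suffices. That formula also shows directly that $V$ is a polynomial of degree at most three when $F$ is quadratic.
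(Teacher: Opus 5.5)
Your proposal is correct and follows the same route as the paper: the components of $\nabla\times F$ give (i)$\Leftrightarrow$(ii), exactness of the closed form $\omega=P\,\dd x+Q\,\dd y+R\,\dd z$ on a simply connected domain gives (iii), Schwarz gives the converse, and local constancy gives uniqueness. The only difference is that the paper simply cites exactness as ``the Poincar\'e lemma.'' You instead supply the local star-shaped argument together with the homotopy-invariance globalization, which is what that citation actually requires for a general simply connected $\Omega$.
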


\begin{proof}
The Jacobian is symmetric if and only if
\[
P_y=Q_x,\qquad P_z=R_x,\qquad Q_z=R_y.
\]
Since
\[
\nabla\times F=(R_y-Q_z,\,P_z-R_x,\,Q_x-P_y),
\]
conditions \textup{(i)} and \textup{(ii)} are equivalent.  The one-form
\[
\omega=P\,\dd x+Q\,\dd y+R\,\dd z
\]
is closed precisely when these identities hold.  Since $\Omega$ is
simply connected, the Poincar\'e lemma gives a scalar function $V$ with
$\dd V=\omega$, that is, $F=\nabla V$.  Because $F$ is $C^1$, the
potential is $C^2$.  Conversely, if $F=\nabla V$, then
\[
DF=\Hess V,
\]
which is symmetric by equality of mixed partial derivatives (Clairaut-Schwarz Lemma).  Finally,
if two potentials have the same gradient, their difference has zero
gradient and is therefore constant on each connected component.
\end{proof}

For polynomial fields on $\R^3$, the potential can be chosen polynomial
without appealing to an abstract existence argument.

\begin{proposition}\label{prop:radial-potential}
Let $F\colon\R^3\to\R^3$ be a polynomial vector field of degree at most
$d$ and suppose that $DF$ is symmetric.  Then
\begin{equation}
V(\bx)=\int_0^1 \langle F(t\bx),\bx\rangle\,\dd t
\label{eq:radial-potential}
\end{equation}
is a polynomial of degree at most $d+1$, satisfies $V(0)=0$, and obeys
\[
\nabla V=F.
\]
Every other potential differs from $V$ by an additive constant.
\end{proposition}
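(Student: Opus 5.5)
The plan is to differentiate the radial integral directly under the integral sign and use the symmetry of $DF$ to recognize a total derivative in $t$. First, polynomiality and degree: write $F=\sum_{j=0}^{d}F_j$ with $F_j$ homogeneous of degree $j$. Then $F(t\bx)=\sum_j t^jF_j(\bx)$, so
\[
\langle F(t\bx),\bx\rangle=\sum_{j=0}^{d}t^j\langle F_j(\bx),\bx\rangle,
\qquad
V(\bx)=\sum_{j=0}^{d}\frac{1}{j+1}\langle F_j(\bx),\bx\rangle .
\]
Each term is a homogeneous polynomial of degree $j+1$. Hence $V$ is a polynomial of degree at most $d+1$, and clearly $V(0)=0$.

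Next, the gradient. Since the integrand is polynomial in $(t,\bx)$, differentiation under the integral sign is justified. For each $i$,
\[
\partial_i V(\bx)=\int_0^1\Bigl(F_i(t\bx)+t\sum_k \partial_iF_k(t\bx)\,x_k\Bigr)\dd t .
\]
This is the only place the hypothesis enters: symmetry of $DF$ gives $\partial_iF_k=\partial_kF_i$, so the bracket becomes
\[
F_i(t\bx)+t\sum_k\partial_kF_i(t\bx)\,x_k=\frac{\dd}{\dd t}\bigl(tF_i(t\bx)\bigr).
\]
Integrating from $0$ to $1$ yields $\partial_iV(\bx)=F_i(\bx)$, that is, $\nabla V=F$.

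Alternatively, one can work with the homogeneous decomposition. Symmetry of $DF$ holds degree by degree, so each $DF_j$ is symmetric. Then Euler's identity $DF_j(\bx)\bx=jF_j(\bx)$ together with $\nabla\langle F_j,\bx\rangle=F_j+DF_j^T\bx=(j+1)F_j$ gives the same conclusion term by term; I would mention this as a check.

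Finally, uniqueness up to an additive constant follows from the last clause of Theorem~\ref{thm:symmetric-gradient}, since $\R^3$ is connected. There is no real obstacle here. The only step needing care is the symmetric-index swap that produces the total $t$-derivative, and recording that differentiation under the integral is legitimate for polynomial integrands.
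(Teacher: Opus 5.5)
Your proof is correct and follows the paper's argument: differentiate under the integral sign, use $\partial_iF_k=\partial_kF_i$ to recognize the bracket as $\frac{\dd}{\dd t}\bigl(tF_i(t\bx)\bigr)$, integrate over $[0,1]$, and invoke Theorem~\ref{thm:symmetric-gradient} for uniqueness up to a constant. Your explicit formula $V=\sum_j\frac{1}{j+1}\langle F_j(\bx),\bx\rangle$ justifies the degree bound and $V(0)=0$ more concretely than the paper does. Your Euler-identity check, which uses the degree-by-degree symmetry of $DF_j$, is also a valid independent route to $\nabla V=F$.
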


\begin{proof}
Because $F(t\bx)$ is polynomial in $(t,\bx)$, integration with respect
to $t$ shows that $V$ is polynomial and $\deg V\le d+1$.  Write
$\bx=(x_1,x_2,x_3)$ and $F=(F_1,F_2,F_3)$.  For each $j$,
\begin{align*}
\frac{\partial V}{\partial x_j}(\bx)
&=\int_0^1\left[
F_j(t\bx)+t\sum_{i=1}^3x_i
\frac{\partial F_i}{\partial x_j}(t\bx)
\right]\dd t\\
&=\int_0^1\left[
F_j(t\bx)+t\sum_{i=1}^3x_i
\frac{\partial F_j}{\partial x_i}(t\bx)
\right]\dd t\\
&=\int_0^1\frac{\dd}{\dd t}\bigl[tF_j(t\bx)\bigr]\dd t
=F_j(\bx),
\end{align*}
where symmetry of $DF$ was used in the second equality.  Thus
$\nabla V=F$.  Uniqueness modulo constants follows from
Theorem~\ref{thm:symmetric-gradient}.
\end{proof}

\begin{corollary}
\label{cor:degree}
A quadratic polynomial vector field on $\R^3$ with symmetric Jacobian
admits a polynomial potential of degree at most three.  If the vector
field has exact degree two, then every polynomial potential has exact
degree three.
\end{corollary}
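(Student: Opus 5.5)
The first assertion follows by specializing Proposition~\ref{prop:radial-potential} to $d=2$. A quadratic field with symmetric Jacobian therefore has the polynomial potential \eqref{eq:radial-potential}, and this potential has degree at most three. By Theorem~\ref{thm:symmetric-gradient}, every other potential, polynomial or not, differs from it by an additive constant. Hence every potential is a polynomial of degree at most three.

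For the exact-degree statement, I will use homogeneous components. Write $F=F_0+F_1+F_2$ with $F_j$ homogeneous of degree $j$, and write a polynomial potential as $V=V_3+V_2+V_1+V_0$, where $V_j$ is homogeneous of degree $j$. Since $\nabla V_j$ is homogeneous of degree $j-1$, we can compare components in $\nabla V=F$ degree by degree. This gives $\nabla V_3=F_2$, $\nabla V_2=F_1$ and $\nabla V_1=F_0$.

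Exact degree two means $F_2\neq 0$. Then $\nabla V_3\neq 0$, so $V_3\neq 0$, and $V$ has exact degree three. Finally, any other polynomial potential is $V+c$ with $c$ a constant. Adding $c$ changes only $V_0$, so $V+c$ also has exact degree three.

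There is no substantial obstacle. The only point needing care is that uniqueness up to constants is taken on all of $\R^3$, which is connected; this is what guarantees that \emph{every} polynomial potential shares the same cubic part $V_3$.
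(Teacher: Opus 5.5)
Your proof is correct and follows essentially the same route as the paper: existence comes from Proposition~\ref{prop:radial-potential} with $d=2$, and exact degree comes from matching top homogeneous components, $\nabla V_3=F_2\neq0$. The only difference is how you reach every potential: you use uniqueness up to additive constants on the connected space $\R^3$, whereas the paper uses the identity $\deg(\nabla V)=\deg V-1$ for any nonconstant polynomial (since a nonconstant homogeneous polynomial has nonzero gradient), which treats an arbitrary polynomial potential directly; both are valid.
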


\begin{proof}
The first assertion follows from Proposition~\ref{prop:radial-potential}
with $d=2$.  For the second, let $V_m$ be the highest-degree homogeneous
part of a potential $V$.  If $m\ge1$, then the highest-degree part of
$\nabla V$ is $\nabla V_m$ and has degree $m-1$; moreover,
$\nabla V_m\not\equiv0$ because a nonconstant homogeneous polynomial
cannot have identically zero gradient.  Hence $\deg(\nabla V)=\deg V-1$.
Therefore $\deg F=2$ implies $\deg V=3$.
\end{proof}

The potential is automatically a strict Lyapunov function along every
nonstationary trajectory, with the sign convention used in this paper.

\begin{corollary}
\label{cor:monotonicity}
Let $\bx(t)$ be a solution of
\[
\dot{\bx}=\nabla V(\bx).
\]
Then
\begin{equation}
\frac{\dd}{\dd t}V(\bx(t))
=\langle\nabla V(\bx(t)),\dot{\bx}(t)\rangle
=\|\nabla V(\bx(t))\|^2\ge0.
\label{eq:monotonicity}
\end{equation}
Equality at a time $t$ holds if and only if $\bx(t)$ is an equilibrium.
Thus $V$ is strictly increasing along every nonconstant orbit.
\end{corollary}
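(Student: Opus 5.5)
The identity in \eqref{eq:monotonicity} is a chain-rule computation. Since $V$ is a polynomial (Corollary~\ref{cor:degree}), and in any case $V\in C^2$ by Theorem~\ref{thm:symmetric-gradient}, the composition $t\mapsto V(\bx(t))$ is $C^1$ on the maximal interval of existence of the solution. Its derivative is $\langle\nabla V(\bx(t)),\dot{\bx}(t)\rangle$. Substituting $\dot{\bx}(t)=\nabla V(\bx(t))$ gives $\|\nabla V(\bx(t))\|^2$, which is clearly nonnegative.

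For the equality clause, the derivative vanishes at time $t$ exactly when $\nabla V(\bx(t))=0$. Since the vector field is $F=\nabla V$, this says precisely that $\bx(t)$ is an equilibrium.

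For strict monotonicity along a nonconstant orbit I would argue as follows. The field $F=\nabla V$ is polynomial, hence locally Lipschitz, so solutions are unique. If $\bx(t_0)$ were an equilibrium for some $t_0$, the constant function $\bx\equiv\bx(t_0)$ would also be a solution through that point. By uniqueness the orbit would then be constant, contrary to assumption. Hence along a nonconstant orbit $\nabla V(\bx(t))\neq0$ for every $t$, so the derivative of $V(\bx(t))$ is strictly positive everywhere. By the mean value theorem, $V(\bx(t))$ is strictly increasing.

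The only point needing care is this use of uniqueness: it rules out a nonconstant orbit touching an equilibrium at a single instant. It is immediate from local Lipschitz continuity of polynomial fields, so I expect no real obstacle.
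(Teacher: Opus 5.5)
Your proof is correct and follows essentially the same route as the paper, which states this corollary without a separate proof and treats it as the direct chain-rule computation you give. Your appeal to uniqueness of solutions for the locally Lipschitz field $\nabla V$ rules out a nonconstant orbit that meets an equilibrium. That is what justifies the strict-increase clause, a point the paper leaves implicit.
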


Many texts define a gradient flow by $\dot{\bx}=-\nabla V(\bx)$, in
which case $V$ decreases along trajectories.  The two conventions are
equivalent after replacing $V$ by $-V$.  In the present manuscript the
choice $\dot{\bx}=\nabla V$ is retained because it coincides directly
with the given vector field.


\section{Cubic Polynomial Potentials and Canonical Parametrization}
\label{sec:cubic-potentials}

Let $\Pcal_{\le d}(\R^3)$ denote the vector space of real polynomials in
three variables of degree at most $d$, and define
\[
\Gcal_{\le2}(\R^3)
=
\left\{
F\in\Pcal_{\le2}(\R^3)^3:\ DF=DF^T
\right\}.
\]

\begin{proposition}
\label{prop:quotient}
The gradient operator
\[
\nabla\colon\Pcal_{\le3}(\R^3)
\longrightarrow\Gcal_{\le2}(\R^3)
\]
is linear and surjective, and
\[
\ker(\nabla)=\R.
\]
Consequently,
\begin{equation}
\Gcal_{\le2}(\R^3)
\cong
\Pcal_{\le3}(\R^3)/\R.
\label{eq:quotient}
\end{equation}
In particular,
\[
\dim\Gcal_{\le2}(\R^3)
=\binom{3+3}{3}-1
=20-1=19.
\]
\end{proposition}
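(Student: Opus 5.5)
The plan is to check the three algebraic claims in turn (well-definedness and linearity, surjectivity, kernel) and then read off the isomorphism and the dimension from the first isomorphism theorem.

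\emph{Well-definedness and linearity.} If $V\in\Pcal_{\le3}(\R^3)$, each partial derivative $\partial V/\partial x_j$ has degree at most two, so $\nabla V\in\Pcal_{\le2}(\R^3)^3$. Moreover $D(\nabla V)=\Hess V$ is symmetric by equality of mixed partials, as already noted in the proof of Theorem~\ref{thm:symmetric-gradient}. Hence $\nabla V\in\Gcal_{\le2}(\R^3)$. Linearity is immediate because differentiation is linear.

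\emph{Surjectivity.} Given $F\in\Gcal_{\le2}(\R^3)$, I would invoke Proposition~\ref{prop:radial-potential} with $d=2$. The radial integral \eqref{eq:radial-potential} yields a polynomial $V$ of degree at most three with $\nabla V=F$, so $V\in\Pcal_{\le3}(\R^3)$ is a preimage of $F$.

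\emph{Kernel.} If $\nabla V=0$ on the connected set $\R^3$, then $V$ is constant by the uniqueness clause of Theorem~\ref{thm:symmetric-gradient}. Conversely, constants are killed by $\nabla$, so $\ker(\nabla)=\R$, embedded as the constant polynomials.

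\emph{Isomorphism and dimension.} The first isomorphism theorem for linear maps now gives \eqref{eq:quotient}. It remains to compute dimensions. The monomials $x^\alpha y^\beta z^\gamma$ with $\alpha+\beta+\gamma\le3$ form a basis of $\Pcal_{\le3}(\R^3)$. Their number is the number of nonnegative solutions of $\alpha+\beta+\gamma+\delta=3$, namely $\binom{6}{3}=20$. Rank–nullity then gives $\dim\Gcal_{\le2}(\R^3)=20-1=19$.

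As a consistency check, this agrees with the count of free parameters obtained in Proposition~\ref{prop:constraints}. The argument has no genuine obstacle. The only step requiring an earlier nontrivial input is surjectivity, specifically the fact that the potential can be chosen polynomial of degree at most three, and that is exactly what Proposition~\ref{prop:radial-potential} provides.
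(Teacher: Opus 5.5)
Your proof is correct and follows the paper's argument: linearity, surjectivity from Proposition~\ref{prop:radial-potential}, kernel equal to the constants, and the first isomorphism theorem with $\dim\Pcal_{\le3}(\R^3)=\binom{6}{3}=20$. You also check explicitly that $\nabla V$ lands in $\Gcal_{\le2}(\R^3)$ via symmetry of $\Hess V$, a well-definedness detail the paper leaves implicit.
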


\begin{proof}
Linearity is immediate.  Surjectivity follows from
Proposition~\ref{prop:radial-potential}.  The gradient of a polynomial
vanishes identically if and only if the polynomial is constant.  The
isomorphism \eqref{eq:quotient} therefore follows from the first
isomorphism theorem for vector spaces.  Finally,
$\dim\Pcal_{\le3}(\R^3)=\binom{6}{3}=20$.
\end{proof}

A coordinate-free parametrization makes the internal symmetry of the
cubic terms especially transparent.  Every $V\in\Pcal_{\le3}(\R^3)$
can be written uniquely as
\begin{equation}
V(\bx)
=\kappa+\langle\ell,\bx\rangle
+\frac12\langle A\bx,\bx\rangle
+\frac16\,T[\bx,\bx,\bx],
\label{eq:tensor-potential}
\end{equation}
where $\kappa\in\R$, $\ell\in\R^3$, $A$ is a symmetric $3\times3$
matrix, and $T=(T_{ijk})$ is a fully symmetric third-order tensor.  In
coordinates,
\begin{align}
F_i(\bx)
&=\frac{\partial V}{\partial x_i}(\bx)
=\ell_i+\sum_{j=1}^3 A_{ij}x_j
+\frac12\sum_{j,k=1}^3T_{ijk}x_jx_k,
\label{eq:tensor-vector-field}\\
(DF(\bx))_{ij}
&=\frac{\partial^2V}{\partial x_i\partial x_j}(\bx)
=A_{ij}+\sum_{k=1}^3T_{ijk}x_k.
\label{eq:tensor-hessian}
\end{align}
The dimensions of the effective data are
\[
\underbrace{3}_{\ell}
+\underbrace{6}_{A=A^T}
+\underbrace{10}_{T\text{ fully symmetric}}
=19.
\]
The omitted scalar $\kappa$ is precisely the one-dimensional kernel of
the gradient map.

A symmetric affine matrix-valued function is not automatically the
Hessian of a cubic polynomial.  Besides pointwise symmetry in the
indices $i,j$, its linear coefficients must satisfy the compatibility
relations
\[
\frac{\partial H_{ij}}{\partial x_k}
=
\frac{\partial H_{ik}}{\partial x_j}
=
\frac{\partial H_{jk}}{\partial x_i},
\]
which are equivalent to full symmetry of the tensor $T_{ijk}$ in
\eqref{eq:tensor-hessian}.  Thus the Hessian structure is more rigid
than merely being a symmetric affine matrix.

For later computations it is useful to record the potential associated
with the explicit $19$-parameter system \eqref{eq:19parameter-system}.
Up to an arbitrary additive constant $\kappa$, it is
\begin{align}
V(x,y,z)={}&\kappa+a_0x+b_0y+c_0z
+\frac12\bigl(a_1x^2+b_2y^2+c_3z^2\bigr)
+a_2xy+a_3xz+b_3yz
\nonumber\\
&+\frac{a_4}{3}x^3
+\frac{a_5}{2}x^2y
+\frac{a_6}{2}x^2z
+a_7xy^2+a_8xyz+a_9xz^2
\nonumber\\
&+\frac{b_7}{3}y^3
+\frac{b_8}{2}y^2z
+b_9yz^2
+\frac{c_9}{3}z^3.
\label{eq:explicit-potential}
\end{align}
Direct differentiation gives
\[
\nabla V=(P,Q,R),
\]
with $(P,Q,R)$ exactly as in \eqref{eq:19parameter-system}.  Conversely,
every polynomial in \eqref{eq:explicit-potential} generates a quadratic
vector field with symmetric Jacobian.

The equilibrium set is the algebraic set
\begin{equation}
\operatorname{Crit}(V)
=
\left\{\bx\in\R^3:\nabla V(\bx)=0\right\},
\label{eq:critical-set}
\end{equation}
that is, the common zero set of three quadratic polynomials.  It may be
finite or may contain positive-dimensional components.  This distinction
will be important when convergence of bounded trajectories is studied.

\subsection*{Reformulation of the conjecture}

The preceding results yield the following exact algebraic reformulation:
\begin{quote}
The class of three-dimensional quadratic polynomial vector fields with
symmetric Jacobian is precisely the class of gradient vector fields
$\dot{\bx}=\nabla V(\bx)$ generated by polynomials
$V\in\Pcal_{\le3}(\R^3)$, modulo additive constants.
\end{quote}
Accordingly, the Sprott--Zeraoulia problem may be rephrased as follows:

\medskip
\noindent\textbf{Gradient formulation of the Sprott--Zeraoulia problem.}
\emph{Determine, in a precise dynamical sense, which forms of chaotic
behavior are impossible for gradient flows generated by cubic
polynomials on $\R^3$.}
\medskip

This wording is deliberately more precise than simply saying that
``cubic gradient flows are not chaotic.''  The next sections must first
specify the relevant class of trajectories or invariant sets and then
use the monotonicity identity \eqref{eq:monotonicity}, compactness
arguments, and analytic gradient theory to establish the strongest
valid conclusion.


\section{Dynamical Consequences of the Gradient Structure}
\label{sec:dynamical-gradient}

We now turn from the algebraic description of the vector field to its
dynamical consequences.  Throughout this section,
\begin{equation}
\dot{\bx}=\nabla V(\bx),
\qquad \bx\in\R^3,
\label{eq:gradient-flow-section5}
\end{equation}
where $V\colon\R^3\to\R$ is at least of class $C^2$.  In the
Sprott--Zeraoulia class, $V$ is a polynomial of degree at most three and
is therefore real analytic.

We denote the equilibrium set by
\[
\operatorname{Crit}(V)
=
\{\bx\in\R^3:\nabla V(\bx)=0\}.
\]
Whenever a solution through $\bx_0$ is defined at time $t$, it will be
denoted by $\phi_t(\bx_0)$.

\subsection{Existence and bounded forward trajectories}

A polynomial vector field is locally Lipschitz, so every initial
condition determines a unique maximal solution.  Since quadratic vector
fields need not be globally Lipschitz, a solution may in principle
escape to infinity in finite time.  We will treat first the bounded solutions, which are more relevant to
the study of attractors and are automatically global, as stated in the next well-known result. 

\begin{lemma}\label{lem:bounded-global}
Let $F\colon\R^3\to\R^3$ be locally Lipschitz, and let
$\bx\colon[0,T_{\max})\to\R^3$ be a maximal forward solution of
$\dot{\bx}=F(\bx)$.  If $\bx([0,T_{\max}))$ is bounded, then
$T_{\max}=+\infty$.
\end{lemma}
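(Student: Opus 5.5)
We argue by contradiction. Suppose the trajectory stays in a bounded set but $T_{\max}<+\infty$. Choose $R>0$ such that $\bx([0,T_{\max}))\subset \overline{B}_R(0)$, and let $K=\overline{B}_{R+1}(0)$, which is compact. Since $F$ is locally Lipschitz, it is continuous. Hence
\[
M=\max_{\bm{y}\in K}\|F(\bm{y})\|<\infty,
\]
and a finite cover of $K$ by balls on which $F$ is Lipschitz gives a uniform Lipschitz constant $L$ on $K$.

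The first step is to show that the solution has a limit as $t\to T_{\max}^-$. For $0\le s<t<T_{\max}$ we have
\[
\|\bx(t)-\bx(s)\|\le\int_s^t\|F(\bx(\tau))\|\dd\tau\le M(t-s).
\]
So $\bx$ is uniformly continuous on $[0,T_{\max})$, and the Cauchy criterion gives a limit $\bx^*=\lim_{t\to T_{\max}^-}\bx(t)\in\overline{B}_R(0)$.

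The second step is to contradict maximality. Use the Picard--Lindel\"of theorem in its uniform form: every initial point $\bm{y}\in\overline{B}_R(0)$ has a solution on $[0,\delta]$, where $\delta=\min(1/M,\,1/(2L))>0$ is independent of $\bm{y}$. Pick $t_0\in(T_{\max}-\delta,T_{\max})$ and solve from $\bx(t_0)\in \overline{B}_R(0)$. By uniqueness, this solution agrees with $\bx$ on $[t_0,T_{\max})$, and it exists up to time $t_0+\delta>T_{\max}$. This extends $\bx$ beyond $T_{\max}$ and contradicts maximality. (Alternatively, define $\bx(T_{\max})=\bx^*$, check from the integral equation that it is a solution on $[0,T_{\max}]$, and restart the local existence theorem at $\bx^*$.)

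The only delicate point is that the local existence time must be uniform in the initial point. The compactness of $K$ and the choice of $\delta$ above take care of this. Everything else is standard.
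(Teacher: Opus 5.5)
Your proof is correct and follows the same route as the paper: assume $T_{\max}<+\infty$, use that $F$ is bounded and Lipschitz on a compact neighborhood of the bounded image, and extend the solution past $T_{\max}$, contradicting maximality. The only difference is that you prove the continuation step explicitly, via a Picard--Lindel\"of existence time $\delta$ that is uniform over initial points in $\overline{B}_R(0)$, whereas the paper simply cites the standard continuation theorem; to cover the degenerate cases $M=0$ or $L=0$, take e.g.\ $\delta=\min(1/(M+1),1/(2L+1))$.
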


\begin{proof}
Suppose, by contradiction, that $T_{\max}<+\infty$.  Since the image of
the solution is bounded, it is contained in a compact set $K$.  The
vector field is bounded and locally Lipschitz on a neighborhood of $K$.
The standard continuation theorem for ordinary differential equations
then allows the solution to be extended beyond $T_{\max}$, contradicting
maximality.  Hence $T_{\max}=+\infty$.
\end{proof}

Thus, from now on, every bounded forward trajectory is understood to be
defined for all $t\geq0$.

\subsection{The energy identity and immediate exclusions}

Corollary~\ref{cor:monotonicity} gives the differential identity
\[
\frac{\dd}{\dd t}V(\bx(t))
=\|\nabla V(\bx(t))\|^2
=\|\dot{\bx}(t)\|^2.
\]
Its integrated form will be used repeatedly.

\begin{proposition}
\label{prop:energy-balance}
For every solution of \eqref{eq:gradient-flow-section5} and every pair
$0\leq s\leq t$ in its interval of existence,
\begin{equation}
V(\bx(t))-V(\bx(s))
=
\int_s^t\|\nabla V(\bx(\tau))\|^2\,\dd\tau
=
\int_s^t\|\dot{\bx}(\tau)\|^2\,\dd\tau.
\label{eq:energy-balance}
\end{equation}
Consequently, $V$ is constant along an orbit if and only if that orbit
is an equilibrium.

If the forward trajectory of $\bx_0$ is bounded, then the finite limit
\begin{equation}
V_\infty(\bx_0)
:=
\lim_{t\to+\infty}V(\phi_t(\bx_0))
\label{eq:potential-limit}
\end{equation}
exists and
\begin{equation}
\int_0^{+\infty}\|\dot{\bx}(t)\|^2\,\dd t
=
V_\infty(\bx_0)-V(\bx_0)
<+\infty.
\label{eq:velocity-L2}
\end{equation}
\end{proposition}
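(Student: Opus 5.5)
The identity \eqref{eq:energy-balance} follows from the fundamental theorem of calculus applied to Corollary~\ref{cor:monotonicity}. On the interval of existence the solution is $C^1$ and $V$ is $C^2$, so $\tau\mapsto V(\bx(\tau))$ is $C^1$ with derivative $\|\nabla V(\bx(\tau))\|^2$. Integrating from $s$ to $t$ gives the first equality. The second equality is the identity $\dot{\bx}=\nabla V(\bx)$ substituted into the integrand.

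For the characterization of constancy, suppose first that $V$ is constant along the orbit. Then the left-hand side of \eqref{eq:energy-balance} vanishes for all $s\le t$. The integrand is continuous and nonnegative, so it vanishes identically, and therefore $\nabla V(\bx(\tau))=0$ for every $\tau$. In particular $\bx(0)$ is an equilibrium. By uniqueness of solutions (the field is locally Lipschitz), the orbit is that equilibrium. Conversely, along an equilibrium $V$ is trivially constant. One point needs care here: "constant along an orbit" should be read on the whole interval of existence. Even constancy on any subinterval of positive length already forces an equilibrium point, and uniqueness then propagates this to the entire orbit.

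For the bounded case, Lemma~\ref{lem:bounded-global} ensures that the solution is defined on $[0,+\infty)$. The closure of the forward orbit is compact, so by continuity $V$ is bounded on it. By Corollary~\ref{cor:monotonicity}, $t\mapsto V(\phi_t(\bx_0))$ is nondecreasing. A bounded monotone function has a finite limit, and this limit is $V_\infty(\bx_0)$. Taking $s=0$ in \eqref{eq:energy-balance} and letting $t\to+\infty$, monotone convergence of the nonnegative integrand gives \eqref{eq:velocity-L2}. Finiteness follows because $V_\infty(\bx_0)-V(\bx_0)\le \sup_K V - V(\bx_0)<\infty$.

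There is no real obstacle in this argument. The only subtle point is invoking uniqueness in the equilibrium characterization, so that vanishing of $\nabla V$ along the trajectory identifies the orbit as a single stationary point rather than merely a curve on which the speed happens to be zero.
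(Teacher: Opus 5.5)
Your proof is correct and follows essentially the same route as the paper: integrate the identity of Corollary~\ref{cor:monotonicity}, then combine boundedness of $V$ on the compact closure of the orbit with monotonicity to obtain the finite limit and \eqref{eq:velocity-L2}. The paper leaves implicit the uniqueness step in the equilibrium characterization and the appeal to Lemma~\ref{lem:bounded-global} for global existence; your proof spells both out.
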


\begin{proof}
Equation \eqref{eq:energy-balance} follows by integrating
\eqref{eq:monotonicity}.  If the trajectory is bounded, its closure is
compact, and therefore $V$ is bounded on that closure.  Since
$t\mapsto V(\bx(t))$ is nondecreasing, it converges to a finite limit.
Letting $t\to+\infty$ in \eqref{eq:energy-balance} gives
\eqref{eq:velocity-L2}.
\end{proof}

The square-integrability in \eqref{eq:velocity-L2} does not, by itself,
prove convergence of the trajectory.  A function may have derivative in
$L^2(0,+\infty)$ without having finite total variation.  The stronger
estimate needed below is supplied by the \L{}ojasiewicz gradient
inequality.

The strict monotonicity of $V$ already excludes several classical
mechanisms of recurrent dynamics.

\begin{proposition}
\label{prop:immediate-exclusions}
For the gradient system \eqref{eq:gradient-flow-section5}, the following
statements hold.
\begin{enumerate}[label=\textup{(\roman*)}]
\item There are no nonconstant periodic orbits.
\item Every positively recurrent point is an equilibrium.  More
precisely, if $t_n\to+\infty$ and
\[
\phi_{t_n}(\bx_0)\to\bx_0,
\]
then $\nabla V(\bx_0)=0$.
\item There are no nonconstant homoclinic orbits.
\item If a nonconstant complete orbit is heteroclinic from an
 equilibrium $p$ to an equilibrium $q$, then
\begin{equation}
V(p)<V(q).
\label{eq:heteroclinic-order}
\end{equation}
In particular, heteroclinic cycles are impossible.
\end{enumerate}
\end{proposition}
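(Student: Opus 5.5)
The common tool for all four items is the energy balance of Proposition~\ref{prop:energy-balance}, combined with the strict monotonicity of Corollary~\ref{cor:monotonicity}. The key observation is that if $\bx_0$ is not an equilibrium, then $\|\nabla V\|^2>0$ on a neighborhood of $\bx_0$. Consequently, for any $\tau>0$ small enough that the solution stays in that neighborhood on $[0,\tau]$,
\[
V(\phi_\tau(\bx_0))-V(\bx_0)=\int_0^\tau\|\nabla V(\phi_s(\bx_0))\|^2\,\dd s=:\delta>0 .
\]
Every item then follows by comparing values of $V$ and showing they would have to be equal.

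\textbf{Item (ii), recurrent points.} Suppose $\phi_{t_n}(\bx_0)\to\bx_0$ with $t_n\to+\infty$. By monotonicity, $V(\phi_{t_n}(\bx_0))\ge V(\phi_\tau(\bx_0))=V(\bx_0)+\delta$ for $t_n\ge\tau$. Continuity of $V$ gives $V(\phi_{t_n}(\bx_0))\to V(\bx_0)$, so $V(\bx_0)\ge V(\bx_0)+\delta$, a contradiction. Hence $\nabla V(\bx_0)=0$.

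\textbf{Item (i), periodic orbits.} If $\phi_T(\bx_0)=\bx_0$ for some $T>0$, then \eqref{eq:energy-balance} gives $\int_0^T\|\nabla V\|^2\,\dd t=0$. So $\nabla V$ vanishes along the orbit and the orbit is an equilibrium. Alternatively, (i) is a special case of (ii) with $t_n=nT$.

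\textbf{Items (iii) and (iv), connecting orbits.} Let $\bx(t)$ be a nonconstant complete orbit with $\bx(t)\to p$ as $t\to-\infty$ and $\bx(t)\to q$ as $t\to+\infty$. By continuity of $V$ we have $V(p)=\lim_{t\to-\infty}V(\bx(t))$ and $V(q)=\lim_{t\to+\infty}V(\bx(t))$. Strict monotonicity then gives
\[
V(p)\le V(\bx(0))<V(\bx(1))\le V(q),
\]
which is \eqref{eq:heteroclinic-order}. Taking $p=q$ gives $V(p)<V(p)$, which is impossible, and this proves (iii).

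For a heteroclinic cycle $p_0\to p_1\to\cdots\to p_k=p_0$, applying (iv) along each connection gives $V(p_0)<V(p_1)<\cdots<V(p_k)=V(p_0)$, again a contradiction.

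The only point needing care is the strictness of the inequality $V(\bx(0))<V(\bx(1))$. It holds because a nonconstant orbit never meets an equilibrium, by uniqueness of solutions. Hence $\|\nabla V\|^2>0$ along the whole orbit, and the integral in \eqref{eq:energy-balance} over $[0,1]$ is strictly positive. Beyond this there is no real obstacle, since everything reduces to \eqref{eq:energy-balance}.
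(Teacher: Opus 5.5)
Your proof is correct and follows essentially the same route as the paper: everything reduces to the energy balance \eqref{eq:energy-balance} and strict monotonicity of $V$ along nonconstant orbits. Item (i) is handled identically, and item (ii) uses the same squeeze argument; you phrase it by contradiction with an explicit $\delta>0$, whereas the paper shows $V(\phi_t(\bx_0))=V(\bx_0)$ for every $t>0$. The only cosmetic difference is that you obtain (iii) as the case $p=q$ of (iv), whereas the paper argues directly that a nondecreasing function with equal limits at $\pm\infty$ is constant.
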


\begin{proof}
If $\bx(t)$ is periodic with period $T>0$, then
\eqref{eq:energy-balance} gives
\[
0=V(\bx(T))-V(\bx(0))
=\int_0^T\|\dot{\bx}(t)\|^2\,\dd t,
\]
so $\dot{\bx}\equiv0$.

Now suppose that $\phi_{t_n}(\bx_0)\to\bx_0$.  For each fixed $t>0$
and all sufficiently large $n$, $t<t_n$, and monotonicity yields
\[
V(\bx_0)
\leq V(\phi_t(\bx_0))
\leq V(\phi_{t_n}(\bx_0)).
\]
Passing to the limit as $n\to\infty$ gives
$V(\phi_t(\bx_0))=V(\bx_0)$.  Hence the orbit is stationary.

If a complete orbit is homoclinic to $p$, then
$V(\bx(t))\to V(p)$ as both $t\to-\infty$ and $t\to+\infty$.  A
nondecreasing function with equal limits at both ends must be constant,
so the orbit is stationary.  Finally, along a nonconstant heteroclinic
orbit from $p$ to $q$, the potential is strictly increasing; taking the
limits as $t\to\pm\infty$ gives \eqref{eq:heteroclinic-order}.  A cycle
would require a strict cyclic ordering of finitely many real numbers,
which is impossible.
\end{proof}

\subsection{Omega-limit sets of bounded trajectories}

For a forward complete solution through $\bx_0$, its omega-limit set is
\begin{equation}
\omega(\bx_0)
=
\left\{
\bm{y}\in\R^3:
\phi_{t_n}(\bx_0)\to\bm{y}
\text{ for some sequence }t_n\to+\infty
\right\}.
\label{eq:omega-definition}
\end{equation}
The following result is the gradient version of the conclusion suggested
by LaSalle's invariance principle \cite{LaSalle1960}.

\begin{theorem}\label{thm:omega-critical}
Let $\bx(t)=\phi_t(\bx_0)$ be a bounded forward trajectory of
\eqref{eq:gradient-flow-section5}.  Then $\omega(\bx_0)$ is nonempty,
compact, connected, and invariant.  Moreover,
\begin{equation}
\omega(\bx_0)
\subset
\operatorname{Crit}(V)
\cap
V^{-1}\bigl(V_\infty(\bx_0)\bigr).
\label{eq:omega-critical-level}
\end{equation}
Thus every accumulation point of a bounded trajectory is an equilibrium,
and all such accumulation points lie on the same critical level of the
potential.
\end{theorem}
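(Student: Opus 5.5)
The plan has two stages. First we establish the standard topological facts about omega-limit sets of bounded trajectories. Then we use the energy identity of Proposition~\ref{prop:energy-balance} to locate $\omega(\bx_0)$ inside the critical level $V^{-1}(V_\infty(\bx_0))$. By Lemma~\ref{lem:bounded-global}, the trajectory is defined for all $t\ge0$. Its closure $K$ is compact, so every sequence $\phi_{t_n}(\bx_0)$ with $t_n\to+\infty$ has a convergent subsequence, and $\omega(\bx_0)\neq\emptyset$. We use the representation
\[
\omega(\bx_0)=\bigcap_{T\ge0}\overline{\{\phi_t(\bx_0):t\ge T\}} .
\]
This exhibits $\omega(\bx_0)$ as a decreasing intersection of nonempty compact connected sets; each is connected as the closure of a continuous image of $[T,\infty)$. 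Hence $\omega(\bx_0)$ is compact and connected; if it were a disjoint union of two nonempty closed sets, a standard separation argument would contradict connectedness of the tail closures for large $T$.

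For invariance, let $\bm y\in\omega(\bx_0)$ with $\phi_{t_n}(\bx_0)\to\bm y$. We first note that the solution through $\bm y$ exists on all of $\R$ and stays in $K$. To see this, fix $s\in\R$. For large $n$ the points $\phi_{t_n+s}(\bx_0)$ are defined and lie in $K$, and continuous dependence on initial conditions on the compact set $K$ gives $\phi_s(\bm y)=\lim\phi_{t_n+s}(\bx_0)$. For $s<0$ this requires a short continuation argument: the solution through $\bm y$ cannot leave a neighborhood of $K$ before time $s$, because nearby solutions stay in $K$. The same identity shows $\phi_s(\bm y)\in\omega(\bx_0)$.

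Now the level-set inclusion. By continuity of $V$ and Proposition~\ref{prop:energy-balance}, $V(\bm y)=\lim V(\phi_{t_n}(\bx_0))=V_\infty(\bx_0)$ for every $\bm y\in\omega(\bx_0)$. Thus $V$ is constant, equal to $V_\infty(\bx_0)$, on $\omega(\bx_0)$. Since $\omega(\bx_0)$ is invariant, the orbit through any $\bm y\in\omega(\bx_0)$ stays in $\omega(\bx_0)$, so $V$ is constant along it. By the ``if and only if'' clause of Proposition~\ref{prop:energy-balance} (equivalently Corollary~\ref{cor:monotonicity}), $\|\nabla V(\phi_t(\bm y))\|^2$ then vanishes identically, and in particular $\nabla V(\bm y)=0$. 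This gives \eqref{eq:omega-critical-level}.

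An alternative to invariance for the criticality step is worth keeping in reserve. Suppose $\nabla V(\bm y)\neq0$. Then $\|\nabla V\|\ge c>0$ on a ball $B$ around $\bm y$. The trajectory enters $B/2$ infinitely often and, having speed at most $\sup_K\|\nabla V\|$, spends a time bounded below in $B$ on each visit. This makes $\int\|\dot\bx\|^2=\infty$, contradicting \eqref{eq:velocity-L2}.

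The main obstacle is the invariance step, specifically verifying carefully that the flow through $\bm y$ is defined for negative times. Once invariance holds, the rest is immediate. If that step becomes awkward, I would fall back on the direct $L^2$-velocity argument above for criticality, which needs only forward-time information, and prove invariance separately afterwards.
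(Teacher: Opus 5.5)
Your proposal is correct and follows the paper's proof essentially step for step. The nested tail closures $K_T$ give nonemptiness, compactness, and connectedness, and invariance of $\omega(\bx_0)$ together with $V\equiv V_\infty(\bx_0)$ on it forces $\nabla V=0$ there. Your treatment of invariance is more careful than the paper's, which just calls it standard. The continuation argument you give for $s<0$ is needed equally for $s>0$, but only forward invariance for small $s\ge0$ is used when differentiating at $s=0$. Your $L^2$-velocity fallback is also a valid alternative route to criticality.
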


\begin{proof}
By Lemma~\ref{lem:bounded-global}, the trajectory is defined for all
$t\geq0$.  For $T\geq0$, set
\[
K_T
=
\overline{\{\phi_t(\bx_0):t\geq T\}}.
\]
Each $K_T$ is nonempty and compact because the trajectory is bounded.
It is connected because it is the closure of the continuous image of
the connected interval $[T,+\infty)$.  The family $(K_T)_{T\geq0}$ is
nested, and
\[
\omega(\bx_0)=\bigcap_{T\geq0}K_T.
\]
It follows that $\omega(\bx_0)$ is nonempty, compact, and connected.
Its invariance is a standard consequence of continuity and uniqueness of
the flow.

Let $\bm{y}\in\omega(\bx_0)$.  There exists $t_n\to+\infty$ such that
$\phi_{t_n}(\bx_0)\to\bm{y}$.  By continuity of $V$ and
\eqref{eq:potential-limit},
\[
V(\bm{y})=V_\infty(\bx_0).
\]
Since the omega-limit set is invariant, $\phi_s(\bm{y})\in\omega(\bx_0)$
for every $s$ for which the orbit is considered.  Hence
\[
V(\phi_s(\bm{y}))=V_\infty(\bx_0)
\]
is constant in $s$.  Differentiating gives
\[
\|\nabla V(\phi_s(\bm{y}))\|^2=0.
\]
In particular, $\nabla V(\bm{y})=0$, proving
\eqref{eq:omega-critical-level}.
\end{proof}

\begin{corollary}\label{cor:isolated-convergence} Let $\bx(t)$ be a bounded forward trajectory.  If the set
\[
\operatorname{Crit}(V)
\cap V^{-1}\bigl(V_\infty(\bx_0)\bigr)
\]
is discrete, then there exists an equilibrium $p$ such that
\[
\lim_{t\to+\infty}\bx(t)=p.
\]
In particular, this conclusion holds whenever all equilibria are
isolated.
\end{corollary}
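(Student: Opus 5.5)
The plan is to combine Theorem~\ref{thm:omega-critical} with a purely topological fact: a connected set contained in a discrete set is a single point. By Theorem~\ref{thm:omega-critical}, $\omega(\bx_0)$ is nonempty, compact and connected, and it lies in $D:=\operatorname{Crit}(V)\cap V^{-1}(V_\infty(\bx_0))$. The set $D$ is closed, being the intersection of the preimages of closed sets under continuous maps. If $D$ is discrete, every subset of $D$ carries the discrete subspace topology. A nonempty connected discrete space is a singleton. Hence $\omega(\bx_0)=\{p\}$ for some equilibrium $p$.

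The second step is to pass from ``the omega-limit set is a point'' to ``the trajectory converges.'' I will argue by contradiction using boundedness. Suppose $\bx(t)\not\to p$. Then there are $\varepsilon>0$ and times $t_n\to+\infty$ with $\|\bx(t_n)-p\|\ge\varepsilon$. Since the trajectory is bounded, the sequence $\bx(t_n)$ has a convergent subsequence. Its limit belongs to $\omega(\bx_0)$ by \eqref{eq:omega-definition}, yet lies at distance at least $\varepsilon$ from $p$. This contradicts $\omega(\bx_0)=\{p\}$, so $\lim_{t\to+\infty}\bx(t)=p$.

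For the final assertion, suppose every equilibrium is isolated in $\operatorname{Crit}(V)$. Then every point of the subset $D$ is isolated in $D$, so $D$ is discrete and the first part applies.

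There is no serious obstacle; the only point to check is that ``discrete'' is used in the subspace sense. That is all the argument needs, because connectedness of $\omega(\bx_0)$ is intrinsic to $\omega(\bx_0)$ itself. Compactness of $\omega(\bx_0)$ is not even required for this step.
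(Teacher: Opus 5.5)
Your proposal is correct and follows the paper's own argument: Theorem~\ref{thm:omega-critical} places the connected set $\omega(\bx_0)$ inside a discrete set, so it is a singleton, and boundedness then gives convergence. You additionally spell out the subsequence argument for that last step, which the paper simply asserts.
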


\begin{proof}
By Theorem~\ref{thm:omega-critical}, the omega-limit set is a connected
subset of a discrete set.  Therefore it consists of a single point,
say $\omega(\bx_0)=\{p\}$.  A bounded trajectory whose omega-limit set
is a singleton converges to that point.
\end{proof}

The preceding argument does not settle the case in which the critical
set contains curves or surfaces.  For cubic polynomial potentials,
this possible degeneracy is resolved by analyticity.

\subsection{The \L{}ojasiewicz gradient inequality and convergence}

We recall the local gradient inequality of \L{}ojasiewicz.  It is the
key analytic ingredient in the asymptotic theory of gradient
trajectories \cite{Lojasiewicz1984,Kurdyka1998}.

\begin{theorem}\label{thm:lojasiewicz-inequality}
Let $V$ be real analytic in a neighborhood of a critical point $p$.
Then there exist a neighborhood $U$ of $p$, a constant $C>0$, and an
exponent $\theta\in(0,1)$ such that
\begin{equation}
|V(\bx)-V(p)|^\theta
\leq C\|\nabla V(\bx)\|,
\qquad \bx\in U.
\label{eq:lojasiewicz-inequality}
\end{equation}
\end{theorem}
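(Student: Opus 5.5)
Since the paper cites this classical result of \L{}ojasiewicz \cite{Lojasiewicz1984,Kurdyka1998} rather than proving it, I would only sketch the standard semianalytic argument. Its core is a one-variable Puiseux computation along analytic arcs, and the heavy lifting is done by the curve selection lemma.

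\emph{Normalization and the critical set.} After a translation and subtracting a constant, I would assume $p=0$ and $V(p)=0$, and write $f=V$. The first step is to show that $f$ vanishes on $\operatorname{Crit}(f)\cap B_\varepsilon$ for some $\varepsilon>0$. Suppose critical points $x_n\to0$ had $f(x_n)\neq0$. The set $\operatorname{Crit}(f)\setminus f^{-1}(0)$ is semianalytic and accumulates at $0$. The curve selection lemma then gives an analytic arc $\gamma\colon[0,\delta)\to\operatorname{Crit}(f)$ with $\gamma(0)=0$ and $f(\gamma(t))\neq0$ for $t>0$. But $\frac{\dd}{\dd t}f(\gamma(t))=\langle\nabla f(\gamma(t)),\gamma'(t)\rangle=0$, so $f\circ\gamma\equiv f(0)=0$, a contradiction. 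Consequently, on the closed ball $\overline B_\varepsilon$ every point with $f\neq0$ is noncritical, and at points with $f=0$ the inequality \eqref{eq:lojasiewicz-inequality} holds trivially.

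\emph{Reduction to arcs.} Suppose \eqref{eq:lojasiewicz-inequality} fails on $U=B_{\varepsilon/2}$ for every $C>0$ and every $\theta\in(0,1)$. I would consider the semianalytic set
\[
 A=\{x\in B_{\varepsilon/2}: f(x)\neq0,\ \|\nabla f(x)\|\,\|x\|\le |f(x)|\}
\]
and show two things. First, $A$ accumulates at $0$. Second, near $0$ the complement of $A$ is controlled, which is the standard Bochnak--\L{}ojasiewicz bound $\|x\|\,\|\nabla f(x)\|\ge c|f(x)|$ localized near $0$. Applying curve selection to $A$ (together with a bound of the form $|f(x)|\le C\|x\|^{?}$) produces an analytic arc $x(t)$ with $x(0)=0$ along which $f(x(t))=a t^k(1+o(1))$ and $\|\nabla f(x(t))\|=b t^m(1+o(1))$ with $a,b\neq0$, while $\|x'(t)\|$ is bounded. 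Differentiating along the arc gives
\[
 k|a|t^{k-1}(1+o(1))=|\langle\nabla f(x(t)),x'(t)\rangle|\le C' t^{m},
\]
so $m\le k-1$. Hence $\|\nabla f(x(t))\|\gtrsim|f(x(t))|^{(k-1)/k}$ along every such arc, with exponent $(k-1)/k<1$.

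\emph{Uniformity.} To obtain a single exponent and constant, I would define for small $s>0$
\[
 \mu(s)=\min\{\|\nabla f(x)\|:\ x\in\overline B_{\varepsilon/2},\ |f(x)|=s\}.
\]
This is positive by the first step and compactness, and it is a subanalytic function of one variable, so it admits a Puiseux expansion $\mu(s)=c s^{\alpha}(1+o(1))$ with $c>0$ and $\alpha\in\mathbb{Q}$. By curve selection, the minimizers can be chosen along an analytic arc $x(t)$ with $|f(x(t))|=s(t)$. The arc computation above then forces $\alpha<1$. Taking any $\theta\in(\alpha,1)$ and shrinking $U$ so that $|f|$ is small on it yields \eqref{eq:lojasiewicz-inequality} with some $C>0$.

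\emph{Main obstacle.} The hard step is justifying the subanalytic tools: the curve selection lemma and the Puiseux expansion of $\mu$, the latter requiring that the graph of $\mu$ is subanalytic by stability of subanalytic sets under proper projection. There is also a technical point when minimizers lie on the boundary sphere $\|x\|=\varepsilon/2$. I would handle it by working on $\overline B_{\varepsilon/2}$ while keeping the first step valid on the larger ball $B_\varepsilon$. I would take these tools as known from \cite{Lojasiewicz1984,Kurdyka1998}, since the paper uses them only through the cited result.
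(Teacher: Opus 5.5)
The paper gives no proof of this statement. It is quoted from \cite{Lojasiewicz1984,Kurdyka1998} and used as a black box in Theorems~\ref{thm:analytic-gradient-convergence}, \ref{thm:finite-accumulation-convergence} and \ref{thm:convergence-sphere-infinity}, so leaning on those references is exactly what the paper does. What you add is an outline of the standard subanalytic proof, which is essentially Kurdyka's argument: minimize $\|\nabla f\|$ over the level sets $\{|f|=s\}$, select an analytic arc of minimizers, and differentiate $f$ along it. The outline is correct, and it makes visible what the citation hides. The exponent $\theta<1$ comes from the one-variable identity $\frac{\dd}{\dd t}f(x(t))=ka\,t^{k-1}(1+o(1))$, which gives $\|\nabla f\|\geq c|f|^{(k-1)/k}$ along arcs. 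Analyticity enters only through curve selection and the tameness of $\mu$. This is why no such inequality, and no such convergence theorem, is available for general $C^\infty$ potentials.

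Some points should be tidied.
\begin{itemize}
\item Your paragraph \emph{Reduction to arcs} is vestigial. The contradiction hypothesis is never closed, and the set $A$ and the Bochnak--\L{}ojasiewicz bound are never used; on the complement of $A$ that bound holds trivially with $c=1$. The estimate $|f(x)|\leq C\|x\|^{?}$ is unfinished. The only ingredient your final argument needs is the arc computation, which holds along \emph{any} analytic arc $x(t)$ with $f(x(0))=0$ and $f\circ x\not\equiv0$. State it as a lemma and delete the rest.
\item As $s\to0$, the minimizers accumulate at some $q\in\overline B_{\varepsilon/2}\cap f^{-1}(0)$, which need not be $p$, since the critical set may be a curve through $p$. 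Curve selection must therefore be applied at $q$. This is harmless, because the arc lemma only needs $f(q)=0$ and analyticity of $f$ near $q$. That requirement, rather than minimizers lying on the boundary sphere, is the real reason for the margin between $B_{\varepsilon/2}$ and $B_\varepsilon$.
\item $\mu$ and the set of minimizers are defined through a minimum, that is, a universal quantifier. Stability of subanalytic sets under proper projection is therefore not enough; you also need stability under complements. Gabrielov's theorem supplies this, as does the fact that subanalytic subsets of the $(s,y)$-plane are semianalytic.
\item The Puiseux expansion of $\mu$ is superfluous. The image of $t\mapsto|f(x(t))|$ contains an interval $(0,s_0]$, so the arc lemma directly gives $\mu(s)\geq c\,s^{(k-1)/k}$ on $(0,s_0]$. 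Any $\theta\in[(k-1)/k,1)\cap(0,1)$ then works once $|f|\leq\min\{1,s_0\}$ on $U$.
\end{itemize}
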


For completeness, we now show in detail how this local inequality turns
the qualitative description of the omega-limit set into convergence to
one point.

\begin{theorem}
\label{thm:analytic-gradient-convergence}
Let $V\colon\R^3\to\R$ be real analytic, and let
$\bx(t)=\phi_t(\bx_0)$ be a bounded forward solution of
\eqref{eq:gradient-flow-section5}.  Then there exists a single critical
point $p\in\operatorname{Crit}(V)$ such that
\begin{equation}
\lim_{t\to+\infty}\bx(t)=p.
\label{eq:trajectory-convergence}
\end{equation}
Moreover, the trajectory has finite length:
\begin{equation}
\int_0^{+\infty}\|\dot{\bx}(t)\|\,\dd t<+\infty.
\label{eq:finite-length}
\end{equation}
\end{theorem}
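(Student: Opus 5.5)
The plan is the classical \L{}ojasiewicz argument, adapted to the increasing convention $\dot{\bx}=\nabla V$. By Theorem~\ref{thm:omega-critical}, $\omega(\bx_0)$ is a nonempty compact subset of $\operatorname{Crit}(V)\cap V^{-1}(V_\infty)$, where $V_\infty=V_\infty(\bx_0)$. Fix any $p\in\omega(\bx_0)$. Theorem~\ref{thm:lojasiewicz-inequality} gives a ball $B(p,r)$, a constant $C>0$ and an exponent $\theta\in(0,1)$ such that \eqref{eq:lojasiewicz-inequality} holds there, with $V(p)=V_\infty$.

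First dispose of a degenerate case. If $V(\phi_{t_0}(\bx_0))=V_\infty$ for some finite $t_0$, then monotonicity and \eqref{eq:energy-balance} force $\dot{\bx}\equiv0$ for $t\ge t_0$. The trajectory is then eventually stationary and both conclusions are trivial. Otherwise $V(\bx(t))<V_\infty$ for all $t$, and we set
\[
h(t)=(V_\infty-V(\bx(t)))^{1-\theta}>0.
\]
Whenever $\bx(t)\in B(p,r)$, the chain rule and \eqref{eq:lojasiewicz-inequality} give the key differential inequality
\[
-\frac{\dd h}{\dd t}=(1-\theta)(V_\infty-V)^{-\theta}\|\nabla V\|^2\ge \frac{1-\theta}{C}\|\nabla V(\bx(t))\|=\frac{1-\theta}{C}\|\dot{\bx}(t)\|.
\]
Integrating it on any interval $[t_1,t_2]$ during which the trajectory stays in $B(p,r)$ yields the length bound
\[
\int_{t_1}^{t_2}\|\dot{\bx}\|\,\dd t\le \frac{C}{1-\theta}\,h(t_1).
\]

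The main obstacle is a trapping argument showing that the trajectory eventually never leaves $B(p,r)$, since the inequality is only local. Because $p$ is an accumulation point and $h(t)\to0$, we can choose $t_1$ with $\|\bx(t_1)-p\|<r/2$ and $\frac{C}{1-\theta}h(t_1)<r/2$. Suppose the trajectory leaves $B(p,r)$, and let $t_2>t_1$ be the first exit time. On $[t_1,t_2]$ the length bound applies, so
\[
\|\bx(t_2)-p\|\le \|\bx(t_1)-p\|+\int_{t_1}^{t_2}\|\dot{\bx}\|\,\dd t<r/2+r/2=r,
\]
which contradicts exit. Hence $\bx(t)\in B(p,r)$ for all $t\ge t_1$, and the length bound holds on $[t_1,\infty)$. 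Adding the finite length on the compact interval $[0,t_1]$ proves \eqref{eq:finite-length}.

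Convergence then follows from finite length. For $s<t$ we have $\|\bx(t)-\bx(s)\|\le\int_s^t\|\dot{\bx}\|$, so $\bx(t)$ is Cauchy as $t\to\infty$ and has a limit. That limit must be an element of $\omega(\bx_0)$ and hence equals $p$. Since $p\in\operatorname{Crit}(V)$ by Theorem~\ref{thm:omega-critical}, this proves \eqref{eq:trajectory-convergence}.
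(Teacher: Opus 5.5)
Your proof is correct, but it is organized differently from the paper's. The paper does not localize at a single point of $\omega(\bx_0)$. It applies Theorem~\ref{thm:lojasiewicz-inequality} at every $p\in\Gamma=\omega(\bx_0)$ and extracts a finite subcover by compactness. After shrinking to the region where $|V-c|\le 1$ and taking the largest of the finitely many exponents, it obtains one inequality $|V-c|^\theta\le C\|\nabla V\|$ on a whole neighborhood $U$ of $\Gamma$. Boundedness then gives $\operatorname{dist}(\bx(t),\Gamma)\to0$, so the trajectory lies in $U$ for all large $t$ automatically. The same differential inequality for $e(t)^{1-\theta}$ then yields the tail-length bound without any exit-time argument.

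Your route replaces this uniformization with a trapping argument: you choose $t_1$ with $\bx(t_1)$ close to $p$ and $\frac{C}{1-\theta}h(t_1)<r/2$, and then rule out a first exit from $B(p,r)$. This needs the \L{}ojasiewicz inequality at only one point. It uses boundedness only to produce an accumulation point and, through Theorem~\ref{thm:omega-critical}, to get its criticality and the value $V(p)=V_\infty$.

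This is exactly the mechanism the paper uses later in Theorem~\ref{thm:finite-accumulation-convergence}, so your argument essentially already contains that stronger result. The paper's proof of the present theorem instead trades the trapping step for a compactness argument, which genuinely depends on the whole forward orbit being bounded.
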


\begin{proof}
Let
\[
c=V_\infty(\bx_0),
\qquad
\Gamma=\omega(\bx_0).
\]
By Theorem~\ref{thm:omega-critical}, $\Gamma$ is a compact subset of
$\operatorname{Crit}(V)\cap V^{-1}(c)$.

For every $p\in\Gamma$, Theorem~\ref{thm:lojasiewicz-inequality}
provides a neighborhood $U_p$, a constant $C_p>0$, and an exponent
$\theta_p\in(0,1)$ such that
\[
|V(\bx)-c|^{\theta_p}
\leq C_p\|\nabla V(\bx)\|,
\qquad \bx\in U_p.
\]
By compactness, finitely many of these neighborhoods cover $\Gamma$.
After shrinking their union so that $|V-c|\leq1$, taking the largest of
the finitely many exponents and increasing the constant if necessary,
we obtain a neighborhood $U$ of $\Gamma$, a constant $C>0$, and one
exponent $\theta\in(0,1)$ such that
\begin{equation}
|V(\bx)-c|^\theta
\leq C\|\nabla V(\bx)\|,
\qquad \bx\in U.
\label{eq:uniform-lojasiewicz}
\end{equation}

For every bounded forward trajectory,
\[
\operatorname{dist}(\bx(t),\Gamma)\longrightarrow0
\qquad\text{as }t\to+\infty.
\]
Indeed, otherwise a sequence of points of the trajectory would remain a
fixed positive distance from $\Gamma$; boundedness would produce an
accumulation point outside $\Gamma$, contradicting the definition of the
omega-limit set.  Hence there exists $t_0$ such that $\bx(t)\in U$ for
all $t\geq t_0$.

Define the nonnegative energy gap
\[
e(t)=c-V(\bx(t)).
\]
Since $V(\bx(t))$ increases to $c$, we have $e(t)\to 0$, and
\begin{equation}
e'(t)=-\|\nabla V(\bx(t))\|^2.
\label{eq:energy-gap-derivative}
\end{equation}
If $e(t)$ becomes zero at a finite time, monotonicity forces the
trajectory to be stationary thereafter, and the conclusion follows.
Assume therefore that $e(t)>0$ for all sufficiently large $t$.
Using \eqref{eq:uniform-lojasiewicz}, we obtain
\begin{align*}
-\frac{\dd}{\dd t}e(t)^{1-\theta}
&=(1-\theta)e(t)^{-\theta}
  \|\nabla V(\bx(t))\|^2\\
&\geq
\frac{1-\theta}{C}\|\nabla V(\bx(t))\|
=
\frac{1-\theta}{C}\|\dot{\bx}(t)\|.
\end{align*}
Integration from $t$ to $T>t$ gives
\[
\int_t^T\|\dot{\bx}(s)\|\,\dd s
\leq
\frac{C}{1-\theta}
\left(e(t)^{1-\theta}-e(T)^{1-\theta}\right).
\]
Letting $T\to+\infty$ yields
\begin{equation}
\int_t^{+\infty}\|\dot{\bx}(s)\|\,\dd s
\leq
\frac{C}{1-\theta}e(t)^{1-\theta}.
\label{eq:tail-length-estimate}
\end{equation}
Thus the tail has finite length, and adding the finite initial segment
proves \eqref{eq:finite-length}.  Moreover, for $t_2>t_1\geq t_0$,
\[
\|\bx(t_2)-\bx(t_1)\|
\leq
\int_{t_1}^{t_2}\|\dot{\bx}(s)\|\,\dd s.
\]
The right-hand side tends to zero as $t_1,t_2\to+\infty$, so
$\bx(t)$ is a Cauchy curve and converges to a point $p$.  Necessarily
$p\in\Gamma\subset\operatorname{Crit}(V)$.
\end{proof}

\begin{corollary}\label{cor:cubic-bounded-convergence}
Every bounded forward trajectory of a quadratic polynomial vector field
on $\R^3$ with symmetric Jacobian converges to a single equilibrium and
has finite length.
\end{corollary}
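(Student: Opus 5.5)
The corollary follows by specializing Theorem~\ref{thm:analytic-gradient-convergence} to the Sprott--Zeraoulia class, so the plan is a short chain of earlier results. First, given a quadratic polynomial vector field $F$ on $\R^3$ with $DF=DF^T$, I invoke Proposition~\ref{prop:radial-potential} (or Corollary~\ref{cor:degree}) to get a polynomial potential $V$ of degree at most three with $F=\nabla V$. The system $\dot{\bx}=F(\bx)$ is then literally the gradient system \eqref{eq:gradient-flow-section5}, and its trajectories and equilibria coincide with those of the gradient flow. In particular, the equilibrium set of $F$ is exactly $\operatorname{Crit}(V)$.

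Second, I check the hypothesis of Theorem~\ref{thm:analytic-gradient-convergence}: $V$ is a polynomial, hence real analytic on all of $\R^3$. It has a convergent Taylor expansion (indeed a finite one) around every point. Therefore Theorem~\ref{thm:lojasiewicz-inequality} applies at every critical point, and this is the only place analyticity enters.

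Third, take any bounded forward trajectory. By Lemma~\ref{lem:bounded-global} it is defined on $[0,+\infty)$. Theorem~\ref{thm:analytic-gradient-convergence} then gives convergence to a single $p\in\operatorname{Crit}(V)$, that is, to an equilibrium of $F$. It also gives the finite-length bound \eqref{eq:finite-length}.

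There is no genuine obstacle here; the substantive work is in Theorem~\ref{thm:analytic-gradient-convergence}. The only points to state explicitly are that the degenerate case $\deg V\le 2$ (a linear or constant field) is also covered, since such a $V$ is still analytic. No isolation or nondegeneracy of equilibria is assumed, so curves or surfaces of equilibria are handled automatically by the \L{}ojasiewicz argument rather than by Corollary~\ref{cor:isolated-convergence}.
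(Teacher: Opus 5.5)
Your proposal is correct and matches the paper's own proof: it obtains the polynomial potential of degree at most three from Sections~\ref{sec:gradient-structure} and~\ref{sec:cubic-potentials}, notes that polynomials are real analytic, and applies Theorem~\ref{thm:analytic-gradient-convergence}. Your additional remarks on global existence via Lemma~\ref{lem:bounded-global} and on the degenerate cases $\deg V\le 2$ are accurate and fit within that same argument.
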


\begin{proof}
By the results of Sections~\ref{sec:gradient-structure} and
\ref{sec:cubic-potentials}, the vector field is the gradient of a
polynomial potential of degree at most three.  Polynomial functions are
real analytic, so Theorem~\ref{thm:analytic-gradient-convergence}
applies.
\end{proof}

\subsection{Finite accumulation points and the bounded--escape dichotomy}

The boundedness hypothesis in
Theorem~\ref{thm:analytic-gradient-convergence} can be replaced by the
existence of one finite accumulation point.  This refinement is useful
because it separates all maximal trajectories into those converging in
$\R^3$ and those genuinely escaping to infinity.

\begin{theorem}\label{thm:finite-accumulation-convergence}
Let $V\colon\R^3\to\R$ be real analytic, and let
$\bx\colon[0,T_{\max})\to\R^3$ be a maximal forward solution of
$\dot\bx=\nabla V(\bx)$.  Suppose that there exist times
$t_n\to  T_{\max}$ and a point $p\in\R^3$ such that
\[
 \bx(t_n)\longrightarrow p.
\]
Then $T_{\max}=+\infty$, the point $p$ is critical, and
\[
 \bx(t)\longrightarrow p
 \qquad\text{as }t\to+\infty.
\]
Moreover, the tail of the trajectory has finite length.
\end{theorem}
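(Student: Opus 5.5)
The plan is to localize the \L{}ojasiewicz argument of Theorem~\ref{thm:analytic-gradient-convergence} at the single point $p$, replacing global boundedness by a trapping argument. First I show that $V(\bx(t))$ converges to $c:=V(p)$. By Corollary~\ref{cor:monotonicity}, $t\mapsto V(\bx(t))$ is nondecreasing on $[0,T_{\max})$. Since $V(\bx(t_n))\to V(p)$ and the function is monotone, $V(\bx(t))\uparrow c$ as $t\to T_{\max}$, so $V(\bx(t))\le c$ for all $t$. If $V(\bx(t_*))=c$ at some time $t_*$, then $V$ is constant on $[t_*,T_{\max})$. Proposition~\ref{prop:energy-balance} then makes the trajectory stationary from $t_*$ on, so it equals $p$, and all claims are trivial. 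Hence I assume $e(t):=c-V(\bx(t))>0$ and $e(t)\downarrow0$.

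Next I apply the \L{}ojasiewicz inequality at $p$. If $\nabla V(p)\neq0$, then $\|\nabla V\|$ is bounded below near $p$, and the inequality $|V-c|^\theta\le C\|\nabla V\|$ holds trivially on a small ball. If $p$ is critical, Theorem~\ref{thm:lojasiewicz-inequality} gives it. So in either case there are $r>0$, $C>0$ and $\theta\in(0,1)$ with $|V(\bx)-c|^\theta\le C\|\nabla V(\bx)\|$ on $B(p,2r)$. Exactly as in the proof of Theorem~\ref{thm:analytic-gradient-convergence}, on any time interval where $\bx(t)\in B(p,2r)$ we have
\[
\int_s^{t}\|\dot\bx\|\,\dd\tau\le \frac{C}{1-\theta}\bigl(e(s)^{1-\theta}-e(t)^{1-\theta}\bigr).
\]

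The key step, and the main obstacle, is the trapping argument. I choose $n$ large so that $\|\bx(t_n)-p\|<r$ and $\frac{C}{1-\theta}e(t_n)^{1-\theta}<r$. Suppose the trajectory leaves $B(p,2r)$ after $t_n$, and let $\tau$ be the first exit time. On $[t_n,\tau]$ the length estimate bounds the path length by less than $r$. Hence $\|\bx(\tau)-p\|<2r$, a contradiction. So $\bx(t)\in B(p,2r)$ for all $t\in[t_n,T_{\max})$. The trajectory is therefore bounded on $[0,T_{\max})$, since $[0,t_n]$ is compact, and Lemma~\ref{lem:bounded-global} yields $T_{\max}=+\infty$.

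Finally, the length estimate holds on $[t_n,\infty)$, so the tail has finite length and $\bx(t)$ is Cauchy, converging to some limit. Because $\bx(t_n)\to p$, that limit is $p$. Criticality of $p$ follows from Theorem~\ref{thm:omega-critical}, since $p\in\omega(\bx_0)$ for a bounded trajectory. Alternatively, one can now simply invoke Theorem~\ref{thm:analytic-gradient-convergence}, since the trajectory is bounded.
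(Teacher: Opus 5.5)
Your proof is correct and uses the same core argument as the paper: the \L{}ojasiewicz length estimate localized at $p$, combined with a two-radius trapping argument showing that the orbit stays in a small ball after some $t_n$. The only difference is the order of steps: the paper first gets $T_{\max}=+\infty$ from the continuation theorem and criticality of $p$ from continuous dependence, whereas you trap first (noting that the inequality holds trivially near a noncritical point) and then deduce global existence from Lemma~\ref{lem:bounded-global} and criticality from Theorem~\ref{thm:omega-critical}.
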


\begin{proof}
If $T_{\max}<+\infty$, the continuation theorem implies that a maximal
solution must leave every compact subset of $\R^3$ as
$t\to T_{\max}$.  This contradicts the convergent subsequence
$\bx(t_n)\to p$.  Hence $T_{\max}=+\infty$.

The function $t\mapsto V(\bx(t))$ is nondecreasing.  Since
$V(\bx(t_n))\to V(p)$, monotonicity implies that
\begin{equation}
 \lim_{t\to+\infty}V(\bx(t))=V(p)=:c.
 \label{eq:finite-accumulation-energy-limit}
\end{equation}
For every sufficiently small fixed $s\geq0$, continuous dependence on initial conditions
gives
\[
 \bx(t_n+s)=\phi_s(\bx(t_n))\longrightarrow\phi_s(p).
\]
On the other hand, \eqref{eq:finite-accumulation-energy-limit} gives
$V(\bx(t_n+s))\to c$.  Therefore
$V(\phi_s(p))=c$ for every $s\geq0$.  Differentiating at $s=0$ yields
\[
 \|\nabla V(p)\|^2=0,
\]
so $p$ is critical.

Choose a neighborhood $U$ of $p$ on which the \L{}ojasiewicz inequality
\[
 |V(\bx)-c|^\theta\leq C\|\nabla V(\bx)\|
\]
holds.  Take radii $0<r_1<r_2$ such that
$\overline{B(p,r_2)}\subset U$.  Put
$e(t)=c-V(\bx(t))$.  For $n$ sufficiently large,
$\bx(t_n)\in B(p,r_1)$ and
\begin{equation}
 \frac{C}{1-\theta}e(t_n)^{1-\theta}<r_2-r_1.
 \label{eq:lojasiewicz-trapping-choice}
\end{equation}
Suppose that the orbit exits $B(p,r_2)$ after time $t_n$, and let
$T_n>t_n$ be its first exit time.  The same calculation as in
\eqref{eq:tail-length-estimate}, applied on $[t_n,T_n]$, gives
\[
 \int_{t_n}^{T_n}\|\dot\bx(s)\|\dd s
 \leq \frac{C}{1-\theta}e(t_n)^{1-\theta}
 <r_2-r_1.
\]
This is impossible because a curve starting in $B(p,r_1)$ and reaching
$\partial B(p,r_2)$ has length at least $r_2-r_1$.  Thus the trajectory
remains in $B(p,r_2)$ for all sufficiently large time.  The
\L{}ojasiewicz estimate then gives finite tail length exactly as in the
proof of Theorem~\ref{thm:analytic-gradient-convergence}, and hence the
trajectory converges.  Its convergent subsequence already tends to $p$,
so the limit is $p$.
\end{proof}

\begin{corollary}\label{cor:finite-or-infinity-dichotomy}
Let $F=\nabla V$ be a polynomial gradient vector field on $\R^3$, and
let $\bx\colon[0,T_{\max})\to\R^3$ be a maximal forward solution.  Then
exactly one of the following alternatives holds:
\begin{enumerate}[label=\textup{(\roman*)}]
\item $T_{\max}=+\infty$ and $\bx(t)$ converges to a finite equilibrium;
\item
\begin{equation}
 \|\bx(t)\|\longrightarrow+\infty
 \qquad\text{as }t\to T_{\max},
 \label{eq:escape-to-infinity}
\end{equation}
where $T_{\max}$ may be finite or infinite.
\end{enumerate}
\end{corollary}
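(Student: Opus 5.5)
The plan is a direct case split on whether the maximal forward solution has any finite accumulation point as $t\to T_{\max}$, using Theorem~\ref{thm:finite-accumulation-convergence} to handle one case and the definition of divergence for the other.

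\emph{Case 1: some finite accumulation point exists.} Suppose there are times $t_n\to T_{\max}$ and a point $p\in\R^3$ with $\bx(t_n)\to p$. Since $V$ is a polynomial, it is real analytic. Theorem~\ref{thm:finite-accumulation-convergence} then gives $T_{\max}=+\infty$, shows that $p$ is critical, and yields $\bx(t)\to p$. This is alternative~\textup{(i)}.

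\emph{Case 2: no finite accumulation point exists.} We claim that \eqref{eq:escape-to-infinity} holds. Suppose not. Then there are $R>0$ and times $t_n\to T_{\max}$ with $\|\bx(t_n)\|\le R$. By Bolzano--Weierstrass, a subsequence of $\bx(t_n)$ converges to some point of the closed ball of radius $R$, and this point is a finite accumulation point. That contradicts the assumption of this case, so $\|\bx(t)\|\to+\infty$ and alternative~\textup{(ii)} holds. Nothing can be said in general about whether $T_{\max}$ is finite here, which is why the statement leaves it open.

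\emph{Exclusivity.} Under~\textup{(i)}, $\|\bx(t)\|\to\|p\|<\infty$, which is incompatible with $\|\bx(t)\|\to+\infty$. So at most one alternative holds, and the two cases above show that at least one does.

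There is no real obstacle here. All the analytic content, namely that time is global and the \L{}ojasiewicz trapping argument, is already contained in Theorem~\ref{thm:finite-accumulation-convergence}. The only point needing care is the compactness step in Case~2: it must be phrased with $t_n\to T_{\max}$ rather than $t_n\to+\infty$, so that it also covers a possibly finite escape time.
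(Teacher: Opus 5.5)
Your proposal is correct and is essentially the paper's own argument. The paper likewise negates \eqref{eq:escape-to-infinity}, takes a bounded sequence $\bx(t_n)$ with $t_n\uparrow T_{\max}$, extracts a convergent subsequence, and applies Theorem~\ref{thm:finite-accumulation-convergence}; your case split on whether a finite accumulation point exists is only a reorganization of that same Bolzano--Weierstrass step, and your explicit exclusivity check spells out what the paper states in one line.
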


\begin{proof}
If \eqref{eq:escape-to-infinity} fails, there is a sequence
$t_n\uparrow T_{\max}$ for which $\bx(t_n)$ remains bounded.  Passing to
a subsequence gives a finite accumulation point.  Theorem
\ref{thm:finite-accumulation-convergence} then yields the first
alternative.  The two alternatives are mutually exclusive.
\end{proof}

\subsection{Nonwandering dynamics and entropy on compact invariant sets}

The convergence theorem has strong consequences for compact invariant
sets.  Recall that a point $\bx$ of a compact invariant set $K$ is
\emph{nonwandering} if, for every neighborhood $U$ of $\bx$ in $K$ and
every $T>0$, there exists $t>T$ such that
$\phi_t(U)\cap U\neq\varnothing$.

\begin{proposition}\label{prop:nonwandering-critical}
Let $K\subset\R^3$ be a compact invariant set of
\eqref{eq:gradient-flow-section5}.  Then
\begin{equation}
\operatorname{NW}(\phi|_K)
=
K\cap\operatorname{Crit}(V).
\label{eq:nonwandering-critical}
\end{equation}
\end{proposition}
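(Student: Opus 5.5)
The proof shows two inclusions. One inclusion is trivial. The other combines the energy identity with continuity of $V$ on the compact set $K$.

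\emph{Inclusion $K\cap\operatorname{Crit}(V)\subset\operatorname{NW}(\phi|_K)$.} If $p\in K$ is an equilibrium, then $\phi_t(p)=p$ for all $t$. Hence $p\in\phi_t(U)\cap U$ for every neighborhood $U$ of $p$ in $K$ and every $t>T$, so $p$ is nonwandering.

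\emph{Inclusion $\operatorname{NW}(\phi|_K)\subset\operatorname{Crit}(V)$.} Let $\bx\in K$ be nonwandering and suppose, for contradiction, that $\nabla V(\bx)\neq0$.
\begin{enumerate}[label=\textup{(\arabic*)}]
\item Since $K$ is compact and invariant, every orbit in $K$ is bounded. By Lemma~\ref{lem:bounded-global} it is complete, so $\phi_t$ is defined on all of $K$ for all $t\in\R$.
\item Fix $\tau>0$. Proposition~\ref{prop:energy-balance} gives
\[
\delta:=V(\phi_\tau(\bx))-V(\bx)=\int_0^\tau\|\nabla V(\phi_s(\bx))\|^2\dd s>0,
\]
where strictness follows from Corollary~\ref{cor:monotonicity}.
\item $V$ is uniformly continuous on $K$, and $\phi_\tau$ is continuous. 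Choose a neighborhood $U$ of $\bx$ in $K$ so small that $|V(\bm y)-V(\bx)|<\delta/3$ and $|V(\phi_\tau(\bm y))-V(\phi_\tau(\bx))|<\delta/3$ for all $\bm y\in U$. Consequently $V(\phi_\tau(\bm y))>V(\bx)+\delta/3$ for every $\bm y\in U$.
\item Nonwandering provides $\bm y\in U$ and $t>\tau$ with $\phi_t(\bm y)\in U$. Since $V$ is nondecreasing along orbits,
\[
V(\phi_t(\bm y))\ge V(\phi_\tau(\bm y))>V(\bx)+\delta/3 .
\]
But $\phi_t(\bm y)\in U$ forces $V(\phi_t(\bm y))<V(\bx)+\delta/3$, a contradiction.
\end{enumerate}
Therefore $\nabla V(\bx)=0$, which completes the proof.

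The only delicate point is the uniform separation in step (3). A single orbit is not enough here; one needs a whole neighborhood whose image under $\phi_\tau$ lies strictly above the level $V(\bx)+\delta/3$. This uses continuity of the time-$\tau$ map together with the monotonicity of $V$ after time $\tau$. Note that the argument uses only $C^1$ regularity of $V$ and needs neither the \L{}ojasiewicz inequality nor analyticity.
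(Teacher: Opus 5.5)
Your proof is correct and is essentially the paper's argument: the paper also picks $\tau>0$ with $V(\phi_\tau(p))>V(p)$ and shrinks $U$ so that $\inf_{U}V\circ\phi_\tau>\sup_{U}V$, which your $\delta/3$ estimates make explicit. It then uses monotonicity to get $\phi_t(U)\cap U=\varnothing$ for all $t\ge\tau$, stated directly rather than by contradiction. One correction to your closing remark: $C^1$ regularity of $V$ gives only a continuous gradient, which does not guarantee uniqueness of solutions, so you need something like a locally Lipschitz $\nabla V$ (e.g.\ $V\in C^2$, as the paper assumes) for $\phi_t$ to be a well-defined continuous flow.
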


\begin{proof}
Every equilibrium is nonwandering.  Conversely, let $p\in K$ be a
noncritical point.  Since
\[
\left.\frac{\dd}{\dd t}V(\phi_t(p))\right|_{t=0}
=\|\nabla V(p)\|^2>0,
\]
there exists $\tau>0$ such that
$V(\phi_\tau(p))>V(p)$.  By continuity of the flow and of $V$, one can
choose a sufficiently small neighborhood $U$ of $p$ in $K$ satisfying
\[
\inf_{\bm{y}\in U}V(\phi_\tau(\bm{y}))
>
\sup_{\bm{y}\in U}V(\bm{y}).
\]
For every $t\geq\tau$ and $\bm{y}\in U$, monotonicity gives
\[
V(\phi_t(\bm{y}))
\geq V(\phi_\tau(\bm{y}))
>
\sup_{\bm{z}\in U}V(\bm{z}).
\]
Therefore $\phi_t(U)\cap U=\varnothing$ for all $t\geq\tau$, and $p$
is wandering.
\end{proof}

We next formulate an entropy consequence.  We use the variational
principle and Poincar\'e recurrence in their standard compact-metric
forms; see \cite{Walters1982}.

\begin{theorem}\label{thm:zero-entropy}
Let $K\subset\R^3$ be a compact invariant set of a gradient flow
\eqref{eq:gradient-flow-section5}.  Then, for every $\tau\neq0$, the
time-$\tau$ map satisfies
\begin{equation}
h_{\mathrm{top}}\bigl(\phi_\tau|_K\bigr)=0.
\label{eq:zero-topological-entropy}
\end{equation}
In particular, no compact invariant set of such a flow can support
dynamics with positive topological entropy.
\end{theorem}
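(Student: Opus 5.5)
The plan is to apply the variational principle for the homeomorphism $f=\phi_\tau|_K$ of the compact metric space $K$:
\[
h_{\mathrm{top}}(f)=\sup_\mu h_\mu(f),
\]
where the supremum runs over $f$-invariant Borel probability measures on $K$. By the ergodic decomposition it suffices to take ergodic $\mu$; alternatively, one can pass directly through the support. We also use $h_{\mathrm{top}}(\phi_{-\tau}|_K)=h_{\mathrm{top}}(\phi_\tau|_K)$, since the entropy of an invertible map equals that of its inverse. Hence we may assume $\tau>0$.

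First, every $f$-invariant probability $\mu$ on $K$ is supported in $K\cap\operatorname{Crit}(V)$. We prove this via Poincar\'e recurrence for $f$. For $\mu$-almost every $\bx\in K$ there is a sequence $n_k\to\infty$ with $f^{n_k}(\bx)=\phi_{n_k\tau}(\bx)\to\bx$. Then Proposition~\ref{prop:immediate-exclusions}(ii), applied with $t_k=n_k\tau\to+\infty$, gives $\nabla V(\bx)=0$. There is also a direct route: $V$ is continuous and bounded on $K$, and $\int (V\circ f - V)\,\dd\mu=0$ by invariance. The integrand equals $\int_0^\tau\|\nabla V(\phi_s\bx)\|^2\dd s\ge0$, so it vanishes $\mu$-a.e., which forces $\nabla V(\bx)=0$ $\mu$-a.e. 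This second route avoids recurrence altogether, and I would present it as the main argument. Either way, $\operatorname{supp}\mu\subset K\cap\operatorname{Crit}(V)$, and this set is compact.

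Second, on $\operatorname{Crit}(V)$ the flow is stationary, so $f$ restricted to $\operatorname{supp}\mu$ is the identity. The identity map has zero topological entropy, since its $(n,\varepsilon)$-spanning sets do not grow with $n$. By the variational principle on the compact invariant set $\operatorname{supp}\mu$, $h_\mu(f)=h_\mu(f|_{\operatorname{supp}\mu})\le h_{\mathrm{top}}(\mathrm{id})=0$. Taking the supremum over $\mu$ yields \eqref{eq:zero-topological-entropy}. The final sentence of the statement is then a restatement.

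The only delicate point is making sure the measure-theoretic step is airtight. This means checking that $\mu$-a.e. vanishing of the continuous function $\int_0^\tau\|\nabla V(\phi_s\bx)\|^2\dd s$ places the support, and not merely a full-measure set, inside $\operatorname{Crit}(V)$. It does, because the zero set of a continuous function is closed. We must also confirm that the variational principle is applied to the homeomorphism $\phi_\tau|_K$ of a compact metric space, which holds since $K$ is compact and invariant and the flow on it is therefore complete. Neither analyticity nor the \L{}ojasiewicz inequality is needed; only the monotonicity \eqref{eq:energy-balance} is used.
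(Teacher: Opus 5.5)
Your proof is correct and has the same skeleton as the paper's proof. Both reduce to $\tau>0$, show that every $\phi_\tau|_K$-invariant probability lives on $K\cap\operatorname{Crit}(V)$ (where the time map is the identity), and conclude with the variational principle.

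The paper gets the middle step exactly by your first route: Poincar\'e recurrence combined with Proposition~\ref{prop:immediate-exclusions}(ii). The averaging argument you prefer is a genuinely different justification. The function $V\circ f-V$ is continuous and nonnegative, has zero $\mu$-integral, and vanishes on $K$ exactly at $K\cap\operatorname{Crit}(V)$. Hence the relatively open set $\{V\circ f>V\}$ is $\mu$-null, which puts the support (not merely a full-measure set) inside the critical set without any recurrence theorem.

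The recurrence route, for its part, gives a pointwise statement about recurrent points. The paper reuses it verbatim in Theorem~\ref{thm:invariant-measures-equilibria} and in the compactified setting of Theorem~\ref{thm:compactified-invariant-measures}. There $V$ is unbounded, so your averaging argument would first need $V$ replaced by a bounded monotone substitute such as $\arctan V$.

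Both arguments rest only on the energy identity. As you observe, analyticity and the \L{}ojasiewicz inequality play no role in this theorem.
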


\begin{proof}
It is enough to consider $\tau>0$, since a homeomorphism and its inverse
have the same topological entropy.  Let
$f=\phi_\tau|_K$, and let $\mu$ be any $f$-invariant Borel probability
measure.  By Poincar\'e recurrence, $\mu$-almost every point is recurrent
under $f$.  Proposition~\ref{prop:immediate-exclusions} implies that
every such point is an equilibrium.  Hence $\mu$ is supported on
$K\cap\operatorname{Crit}(V)$, where $f$ is the identity.  Therefore
$h_\mu(f)=0$ for every invariant probability measure $\mu$.  The
variational principle gives
\[
h_{\mathrm{top}}(f)
=
\sup_\mu h_\mu(f)
=0.
\]
\end{proof}

\begin{corollary}\label{cor:no-li-yorke}
Assume that $V$ is real analytic, and let $K$ be compact and invariant.
For any $\bx,\bm{y}\in K$, the limit
\[
\lim_{t\to+\infty}
\|\phi_t(\bx)-\phi_t(\bm{y})\|
\]
exists.  Consequently, $K$ contains no Li--Yorke scrambled pair.
\end{corollary}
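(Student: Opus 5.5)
Since $K$ is compact and invariant, the forward trajectories of $\bx$ and $\bm{y}$ remain in $K$. They are therefore bounded and, by Lemma~\ref{lem:bounded-global}, defined for all $t\ge0$. Because $V$ is real analytic, Theorem~\ref{thm:analytic-gradient-convergence} applies to each of them and produces critical points $p,q\in K\cap\operatorname{Crit}(V)$ with $\phi_t(\bx)\to p$ and $\phi_t(\bm{y})\to q$. The norm is continuous, so $\|\phi_t(\bx)-\phi_t(\bm{y})\|\to\|p-q\|$, and this proves that the limit exists. The only substantive input is the convergence theorem. In particular, no isolation assumption on the equilibria is needed, and this is the one place where analyticity enters, since Corollary~\ref{cor:isolated-convergence} alone would not cover continua of equilibria.

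For the Li--Yorke consequence I will recall the standard definition. A pair $(\bx,\bm{y})$ is scrambled when
\[
\liminf_{t\to+\infty}\|\phi_t(\bx)-\phi_t(\bm{y})\|=0
\quad\text{and}\quad
\limsup_{t\to+\infty}\|\phi_t(\bx)-\phi_t(\bm{y})\|>0.
\]
When the limit exists, the $\liminf$ and the $\limsup$ coincide, so both conditions cannot hold together, and no scrambled pair exists. If the paper's convention uses the time-$\tau$ map $\phi_\tau|_K$ and discrete times $n\tau$, the same argument applies, because the subsequence $n\tau$ inherits the limit. The same remark covers any other equivalent metric on $K$.

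I do not expect a genuine obstacle. The only point to check is that the hypotheses of Theorem~\ref{thm:analytic-gradient-convergence} are met: forward completeness, and boundedness, which follows from $\phi_t(\bx)\in K$. Invariance of $K$ guarantees both.
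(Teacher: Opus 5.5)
Your proposal is correct and follows the paper's own argument. Both apply Theorem~\ref{thm:analytic-gradient-convergence} to the two bounded trajectories in $K$, pass to the limit $\|p-q\|$ by continuity of the norm, and conclude that $\liminf$ and $\limsup$ coincide, which rules out a Li--Yorke pair; your added remarks on forward completeness and on the discrete-time version are harmless refinements of the same proof.
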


\begin{proof}
By Theorem~\ref{thm:analytic-gradient-convergence}, there exist
equilibria $p$ and $q$ such that
$\phi_t(\bx)\to p$ and $\phi_t(\bm{y})\to q$.  Hence
\[
\|\phi_t(\bx)-\phi_t(\bm{y})\|
\longrightarrow\|p-q\|.
\]
The liminf and the limsup of the mutual distance are therefore equal,
which excludes a Li--Yorke pair.
\end{proof}

\subsection{Main consequence for the Sprott--Zeraoulia class}

The results of this section may be summarized as follows.

\begin{theorem}\label{thm:main-gradient-dynamics}
Let $F\colon\R^3\to\R^3$ be a polynomial vector field of degree at most
two with symmetric Jacobian matrix.  Then:
\begin{enumerate}[label=\textup{(\roman*)}]
\item every maximal forward trajectory either converges to a finite
equilibrium or escapes to infinity; every bounded forward trajectory is
global, has finite length, and converges to a single equilibrium;
\item every positively recurrent point is an equilibrium;
\item nonconstant periodic or homoclinic orbits and heteroclinic cycles
are impossible;
\item on every compact invariant set, the nonwandering set consists
exactly of the equilibria;
\item every time map restricted to a compact invariant set has zero
topological entropy;
\item no compact invariant set contains a Li--Yorke scrambled pair.
\end{enumerate}
\end{theorem}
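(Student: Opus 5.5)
The plan is to treat Theorem~\ref{thm:main-gradient-dynamics} as an assembly result: each item is reduced to a result already established in this section. The only input specific to the Sprott--Zeraoulia class is the gradient structure. By Proposition~\ref{prop:radial-potential} and Corollary~\ref{cor:degree}, a quadratic field $F$ with symmetric Jacobian can be written as $F=\nabla V$ with $V$ a polynomial of degree at most three. In particular $V$ is real analytic on $\R^3$, which is the hypothesis needed for the \L{}ojasiewicz-based results. After fixing $V$ once, every item becomes a statement about the flow of $\dot{\bx}=\nabla V(\bx)$.

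Item (i) combines three earlier results. The dichotomy between convergence to a finite equilibrium and escape to infinity is Corollary~\ref{cor:finite-or-infinity-dichotomy}. For a bounded forward trajectory, Lemma~\ref{lem:bounded-global} gives global existence. Corollary~\ref{cor:cubic-bounded-convergence}, which is Theorem~\ref{thm:analytic-gradient-convergence} applied to the cubic $V$, then gives finite length and convergence to a single equilibrium.

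Items (ii) and (iii) follow directly from Proposition~\ref{prop:immediate-exclusions}, which uses only monotonicity of $V$. Item (iv) is Proposition~\ref{prop:nonwandering-critical}, and item (v) is Theorem~\ref{thm:zero-entropy} applied to any compact invariant set and any $\tau\neq0$. Item (vi) is Corollary~\ref{cor:no-li-yorke}. That corollary needs forward convergence of every point of $K$; this holds because forward orbits of points in a compact invariant set are bounded, so Theorem~\ref{thm:analytic-gradient-convergence} applies to them.

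I expect no genuine obstacle, since all the analytic work sits in the \L{}ojasiewicz convergence theorem. The one point to check is that the hypotheses match when the earlier results are invoked. Compact invariance must supply boundedness and forward completeness for every point of $K$, and the entropy argument needs the recurrence exclusion of Proposition~\ref{prop:immediate-exclusions}(ii) for the time-$\tau$ map rather than for the flow. The second point is handled because a point recurrent under $\phi_\tau$ is positively recurrent for the flow along the times $t_n=n_k\tau$, so item (ii) applies to it.
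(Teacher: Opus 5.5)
Your proposal is correct and follows the paper's proof. The paper's proof is exactly this assembly: the analytic cubic potential gives item \textup{(i)} through Corollaries~\ref{cor:finite-or-infinity-dichotomy} and~\ref{cor:cubic-bounded-convergence}, and the remaining items come from Proposition~\ref{prop:immediate-exclusions}, Proposition~\ref{prop:nonwandering-critical}, Theorem~\ref{thm:zero-entropy}, and Corollary~\ref{cor:no-li-yorke}. Your two extra checks are valid details that the paper leaves implicit: forward orbits in a compact invariant set are bounded and complete, and a point recurrent under $\phi_\tau$ is positively recurrent for the flow along the times $n_k\tau$.
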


\begin{proof}
The gradient representation and analyticity follow from
Sections~\ref{sec:gradient-structure} and \ref{sec:cubic-potentials}.
The trajectory dichotomy follows from
Corollary~\ref{cor:finite-or-infinity-dichotomy}; the bounded convergence
statement is Corollary~\ref{cor:cubic-bounded-convergence}.  The remaining
conclusions are
Proposition~\ref{prop:immediate-exclusions},
Proposition~\ref{prop:nonwandering-critical},
Theorem~\ref{thm:zero-entropy}, and
Corollary~\ref{cor:no-li-yorke}.
\end{proof}

It is worth noting that Theorem~\ref{thm:main-gradient-dynamics} does not assert that every
compact invariant set consists only of equilibria.  Such a set may also
contain heteroclinic connections ordered by strictly increasing values
of $V$.  What the theorem shows is that all recurrent dynamics is
concentrated on the equilibrium set and that every nonstationary bounded
orbit converges to an equilibrium in forward time.

Also, a positive Lyapunov exponent, taken in isolation, is not equivalent to
chaos.  For example, an unstable equilibrium may have positive
linearized exponents, and an exceptional orbit converging to a saddle
may still possess an expanding transverse direction.  The conclusions
above avoid this ambiguity: convergence of every bounded orbit, absence
of nontrivial recurrence, and zero entropy on compact invariant sets are
intrinsic dynamical statements.

The theorem gives a rigorous negative answer to the Sprott--Zeraoulia
problem for the standard compact-invariant formulations of continuous-
time chaos in $\R^3$, including strange attractors with recurrent
dynamics, positive topological entropy, and Li--Yorke scrambling.
Corollary~\ref{cor:finite-or-infinity-dichotomy} shows that every
remaining maximal trajectory genuinely escapes.  Its asymptotic
directions and the dynamics on the compactifying sphere are analyzed in
Section~\ref{sec:infinity}.


\section{Exclusion of the Standard Notions of Chaos}
\label{sec:exclusion-chaos}

The word \emph{chaos} is used in the literature with several inequivalent
meanings.  Some definitions emphasize topological transitivity and
sensitivity, whereas others are formulated in terms of scrambled pairs,
entropy, symbolic dynamics, invariant measures, or strange attractors.
There is no unique and universally accepted definition of chaotic behavior,
reflecting the fact that chaos has several distinct mathematical
manifestations.  This issue has received considerable attention; see, for
example, the series of papers by Brown and Chua
\cite{chua1,chua2,chua3} and the discussion by Sander and Yorke
\cite{yorke}.  As emphasized in \cite{yorke},

\begin{quote}``\textit{In fact, chaos cannot now be satisfactorily defined mathematically using a single definition, not because chaos is not a single concept, but because chaos has many manifestations in many different situations.'' ... ``For the definition to be useful, the determination of chaos should depend on the viewpoint of the investigator. It must be phrased in terms of the information that is available to the scientist in question.''}
\end{quote}

Therefore, a rigorous resolution of the Sprott--Zeraoulia conjecture should
not rely on one informal interpretation of chaotic behavior.  Instead, the
structural results obtained in Section~\ref{sec:dynamical-gradient} will
be compared with the principal mathematical definitions used in the
literature; see, among others,
\cite{Devaney1989,AuslanderYorke1980,LiYorke1975,
BanksEtAl1992,AdlerKonheimMcAndrew1965,SchweizerSmital1994}.

Throughout this section, $F=\nabla V$ is a polynomial vector field of
degree at most two on $\R^3$, the potential $V$ is a polynomial of degree
at most three, and $K\subset\R^3$ is a compact invariant set.  The flow
restricted to $K$ is complete.  For a fixed $\tau>0$, we write
\begin{equation}
 f_\tau=\phi_\tau|_K\colon K\longrightarrow K.
 \label{eq:time-map-section6}
\end{equation}
The map $f_\tau$ is a homeomorphism, and every point of $K$ converges,
under positive iteration, to a fixed point of $f_\tau$ by
Corollary~\ref{cor:cubic-bounded-convergence}.

\subsection{Absence of topological transitivity and mixing}

Topological transitivity is one of the most persistent ingredients in
mathematical definitions of chaos.  For a continuous map $f$ on a
compact metric space $K$, it means that for every pair of nonempty open
sets $U,W\subset K$ there exists $n\geq0$ such that
$f^n(U)\cap W\neq\varnothing$.

The potential itself gives a direct obstruction to this property.

\begin{theorem}\label{thm:no-transitivity}
Let $K$ be a compact invariant set of the gradient flow
$\dot{\bx}=\nabla V(\bx)$.  If $K$ contains more than one point, then,
for every $\tau>0$, the time map $f_\tau$ is not topologically
transitive on $K$.
\end{theorem}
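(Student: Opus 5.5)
The plan is to use the potential $V$ as a monotone obstruction. By Corollary~\ref{cor:monotonicity}, $V$ is nondecreasing along every orbit. Hence any superlevel set of $V$ in $K$ is forward invariant under $f_\tau$, and it can never reach a disjoint sublevel set. Only two situations can occur, according to whether $V|_K$ is constant, and I will treat them separately.

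\emph{Case 1: $V|_K$ is not constant.} Since $K$ is compact, $V$ attains a minimum $m$ and a maximum $M$ on $K$, with $m<M$. Fix any $c\in(m,M)$ and set
\[
U=\{\bx\in K: V(\bx)>c\},\qquad W=\{\bx\in K: V(\bx)<c\}.
\]
Both sets are open in $K$ by continuity of $V$, and both are nonempty because they contain a maximizer and a minimizer respectively. For $\bx\in U$ and $n\ge0$, monotonicity gives $V(f_\tau^n(\bx))=V(\phi_{n\tau}(\bx))\ge V(\bx)>c$. Thus $f_\tau^n(U)\subset\{V>c\}$, so $f_\tau^n(U)\cap W=\varnothing$ for every $n\ge0$, and transitivity fails.

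\emph{Case 2: $V|_K\equiv c$ is constant.} Let $\bx\in K$. Since $K$ is invariant, its whole orbit stays in $K$, so $V$ is constant along that orbit. Proposition~\ref{prop:energy-balance} then forces $\bx$ to be an equilibrium. Hence $K\subset\operatorname{Crit}(V)$ and $f_\tau$ is the identity on $K$. Because $K$ has at least two points and is Hausdorff, there are disjoint nonempty open sets $U,W\subset K$. Then $f_\tau^n(U)=U$ misses $W$ for all $n$, so again transitivity fails.

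I do not expect a serious obstacle here. The only point needing care is Case~2: a constant potential cannot separate any open sets, so I must return to the fact that constancy of $V$ along an orbit forces stationarity. Once that is in place, the degenerate case reduces to the identity map, which is not transitive on a space with more than one point. Notice that the argument uses neither analyticity nor the \L{}ojasiewicz inequality; it relies only on monotonicity and compactness.
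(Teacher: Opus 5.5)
Your proposal is correct and follows the paper's proof essentially step for step. It uses the same case split on whether $V|_K$ is constant. In the nonconstant case it separates a superlevel set from a sublevel set by monotonicity of $V$; you use a single threshold $c$ where the paper uses two thresholds $\alpha>\beta$, which is immaterial. In the constant case it reduces $f_\tau$ to the identity via stationarity, and your remark that neither analyticity nor the \L{}ojasiewicz inequality is needed is also accurate.
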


\begin{proof}
There are two cases.

Assume first that $V|_K$ is not constant.  Choose $a,b\in K$ such that
$V(a)>V(b)$, and choose real numbers $\alpha$ and $\beta$ satisfying
\[
 V(a)>\alpha>\beta>V(b).
\]
The sets
\[
 U=\{\bx\in K:V(\bx)>\alpha\},
 \qquad
 W=\{\bx\in K:V(\bx)<\beta\}
\]
are nonempty and open in the relative topology of $K$.  Since $V$ is
nondecreasing along positive trajectories, for every $\bx\in U$ and
every $n\geq0$,
\[
 V(f_\tau^n(\bx))\geq V(\bx)>\alpha>\beta.
\]
Hence $f_\tau^n(U)\cap W=\varnothing$ for every $n\geq0$, contradicting
transitivity.

Assume now that $V|_K$ is constant.  Since $K$ is invariant, for every
$\bx\in K$ the function $t\mapsto V(\phi_t(\bx))$ is constant.  The
identity
\[
 \frac{\dd}{\dd t}V(\phi_t(\bx))
 =\|\nabla V(\phi_t(\bx))\|^2
\]
therefore implies $\nabla V(\bx)=0$.  Thus every point of $K$ is an
equilibrium and $f_\tau$ is the identity map on $K$.  If $K$ has more
than one point, two disjoint nonempty relatively open subsets of $K$
can be chosen, and the identity map cannot send one of them into the
other.  Hence it is not transitive.
\end{proof}

\begin{corollary}\label{cor:no-mixing}
On a compact invariant set containing more than one point, no time map
$f_\tau$ is topologically mixing, weakly mixing, or exact.  In
particular, the restricted flow admits no nontrivial compact invariant
set on which the forward dynamics has a dense orbit.
\end{corollary}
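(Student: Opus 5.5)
The plan is to reduce everything to Theorem~\ref{thm:no-transitivity}, using the standard implications among the mixing-type properties. Fix a compact invariant set $K$ with more than one point and some $\tau>0$, and write $f=f_\tau$. Recall the definitions for a continuous map on a compact metric space. $f$ is topologically mixing if for all nonempty open $U,W$ there is $N$ with $f^n(U)\cap W\neq\varnothing$ for all $n\ge N$. It is weakly mixing if $f\times f$ is transitive on $K\times K$. It is exact (locally eventually onto) if for every nonempty open $U$ some $f^n(U)=K$. I will show that each of these properties implies topological transitivity of $f$, which Theorem~\ref{thm:no-transitivity} forbids.

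The implications are elementary.
\begin{enumerate}[label=\textup{(\alph*)}]
\item Mixing gives, in particular, one $n$ with $f^n(U)\cap W\neq\varnothing$, which is transitivity.
\item For weak mixing, apply transitivity of $f\times f$ to the product open sets $U\times U$ and $W\times W$. This yields $n$ with $(f\times f)^n(U\times U)\cap(W\times W)\neq\varnothing$, hence $f^n(U)\cap W\neq\varnothing$.
\item For exactness, $f^n(U)=K$ meets every nonempty $W$.
\end{enumerate}
In each case $f$ would be transitive on $K$, contradicting Theorem~\ref{thm:no-transitivity}.

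For the dense-orbit statement, suppose some $\bx\in K$ has a forward orbit $\{\phi_t(\bx):t\ge0\}$ dense in $K$. I would argue directly with the potential rather than pass through transitivity of a time map, since the flow orbit is a continuous-time object. By Corollary~\ref{cor:cubic-bounded-convergence}, $\phi_t(\bx)\to p$ for some equilibrium $p$. The closure of the forward orbit is therefore $\{\phi_t(\bx):t\ge0\}\cup\{p\}$, which must equal $K$.
\begin{itemize}
\item If $\bx$ is an equilibrium, then $K=\{\bx\}$, which contradicts $\#K>1$.
\item Otherwise $\bx$ is not an equilibrium. Then $V$ is strictly increasing along the orbit, so $V(\phi_s(\bx))<V(\bx)$ for $s<0$. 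But $\phi_{-1}(\bx)\in K$ by invariance, and every point of $K$ is either $p$ (with $V(p)=\sup_{t\ge0} V(\phi_t\bx)\ge V(\bx)$) or of the form $\phi_t(\bx)$ with $t\ge0$ (with $V\ge V(\bx)$). This is a contradiction.
\end{itemize}
Alternatively, the argument can be routed through the time-$\tau$ map, noting that a dense flow orbit gives a point whose $f_\tau$-orbit is dense after the standard reduction. The direct potential argument avoids that subtlety, so I will use it.

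The main obstacle is only this last point: making sure that backward invariance of $K$ is used, since a forward-invariant set such as a single orbit plus its limit could otherwise carry a dense orbit. Everything else is routine definition-chasing.
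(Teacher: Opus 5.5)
Your reduction of mixing, weak mixing, and exactness to topological transitivity, followed by Theorem~\ref{thm:no-transitivity}, is exactly the paper's argument, and your three implications are correct.

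The dense-orbit claim is where your route genuinely differs. The paper disposes of it in one line (``a dense positive orbit would also imply transitivity''). You instead argue directly: you write $K=\{\phi_t(\bx):t\ge0\}\cup\{p\}$ and use backward invariance to get $\phi_{-1}(\bx)\in K$ with $V(\phi_{-1}(\bx))<V(\bx)=\min_K V$.

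Your route is the more careful one. The implication ``dense orbit $\Rightarrow$ transitive'' is standard for maps on spaces without isolated points. A dense forward \emph{flow} orbit, however, only gives transitivity of the flow, not of the time map $f_\tau$ for which Theorem~\ref{thm:no-transitivity} is stated. The proof of that theorem transfers verbatim to the flow, so the paper's step is repairable. In addition, for flows the implication genuinely needs full invariance. Your example of the forward-invariant set $\{\phi_t(\bx):t\ge0\}\cup\{p\}$ shows this: it carries a dense forward orbit but is not transitive. The paper's route buys brevity and uniformity with the mixing statements; yours is self-contained.

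Two small remarks. First, you do not need the \L{}ojasiewicz convergence here. Theorem~\ref{thm:omega-critical} already places $\omega(\bx)$ on the level $V_\infty(\bx)\ge V(\bx)$, which is all your contradiction uses. Second, your aside that a dense flow orbit yields a dense $f_\tau$-orbit ``after the standard reduction'' is false for a fixed $\tau$ in general. On a periodic orbit of period $\tau$ the flow orbit is dense but $f_\tau$ is the identity, so it is good that you did not rely on it.
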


\begin{proof}
Each of the stated mixing properties implies topological transitivity.
The conclusion follows from Theorem~\ref{thm:no-transitivity}.  A dense
positive orbit would also imply transitivity on a compact metric space,
and is therefore impossible.
\end{proof}

\subsection{Devaney and Auslander--Yorke chaos}

Devaney's definition combines topological transitivity, density of
periodic points, and sensitive dependence on initial conditions
\cite{Devaney1989}.  On an infinite metric space, transitivity together
with density of periodic points already implies sensitivity
\cite{BanksEtAl1992}.  Auslander and Yorke introduced a related notion
based on transitivity and sensitivity \cite{AuslanderYorke1980}.

\begin{definition}Let $f\colon K\to K$ be continuous on a compact metric space.
\begin{enumerate}[label=\textup{(\roman*)}]
\item The map $f$ is \emph{chaotic in the sense of Devaney} if it is
 topologically transitive, its periodic points are dense in $K$, and it
 has sensitive dependence on initial conditions.
\item The map $f$ is \emph{chaotic in the sense of Auslander--Yorke} if
 it is topologically transitive and sensitive.
\end{enumerate}
\end{definition}

\begin{theorem}\label{thm:no-devaney-ay}
Let $K$ be a compact invariant set of a quadratic polynomial vector
field on $\R^3$ with symmetric Jacobian.  For every $\tau>0$, the time
map $f_\tau$ is neither Devaney chaotic nor Auslander--Yorke chaotic on
any subset $K$ containing more than one point.
\end{theorem}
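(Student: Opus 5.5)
The plan is to reduce both notions to topological transitivity, which Theorem~\ref{thm:no-transitivity} already excludes. By the definition just given, Devaney chaos and Auslander--Yorke chaos each require $f_\tau$ to be topologically transitive on $K$. So the proof is short. Fix $\tau>0$ and let $K$ be a compact invariant set with more than one point. By Theorem~\ref{thm:gradient-structure}'s consequences (Theorem~\ref{thm:symmetric-gradient}, Proposition~\ref{prop:radial-potential} and Corollary~\ref{cor:degree}), the vector field is $\nabla V$ for a cubic polynomial $V$. Hence Theorem~\ref{thm:no-transitivity} applies and $f_\tau$ is not transitive on $K$. Both definitions then fail, independently of density of periodic points or sensitivity.

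The one point needing care is the phrase ``on any subset $K$''. I will read it as: any compact invariant subset with more than one point, since $f_\tau$ is only a self-map of invariant sets. If $K$ is a single point, transitivity holds trivially but sensitivity fails. Such a $K$ is excluded by hypothesis in any case.

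As a reinforcement, I would also remark that the Devaney condition fails for a second, independent reason. By Proposition~\ref{prop:immediate-exclusions}(i)--(ii), every periodic point of $f_\tau$ is recurrent for the flow and hence an equilibrium. A periodic point $x$ with $f_\tau^n(x)=x$ is positively recurrent along $t_k=kn\tau$. So the periodic points of $f_\tau$ are exactly $K\cap\operatorname{Crit}(V)$. If these were dense in the closed set $K$, then $K\subset\operatorname{Crit}(V)$, $f_\tau$ would be the identity, and the identity on a space with at least two points is not transitive.

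I do not expect a genuine obstacle. The only step requiring attention is checking that Theorem~\ref{thm:no-transitivity} uses exactly the same notion of transitivity (existence of $n\ge0$ with $f^n(U)\cap W\neq\varnothing$) as the definitions here. It does.
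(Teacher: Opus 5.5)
Your proposal is correct and follows the paper's proof essentially verbatim: both Devaney and Auslander--Yorke chaos require topological transitivity, which Theorem~\ref{thm:no-transitivity} rules out on any compact invariant set with more than one point. Your extra Devaney-specific argument is also the paper's second obstruction: periodic points of $f_\tau$ are exactly $K\cap\operatorname{Crit}(V)$, so density would force $f_\tau=\mathrm{id}$ on $K$. The only difference is that you justify the periodic-point step by positive recurrence along $t_k=kn\tau$, whereas the paper appeals to the absence of nonconstant periodic orbits; both are valid.
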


\begin{proof}
Both definitions require topological transitivity, which is excluded by
Theorem~\ref{thm:no-transitivity}.

There is also a second obstruction specific to Devaney chaos.  A point
periodic under $f_\tau$ either belongs to a periodic orbit of the flow or
is an equilibrium.  Proposition~\ref{prop:immediate-exclusions} excludes
nonconstant periodic orbits.  Therefore
\[
 \operatorname{Per}(f_\tau)=K\cap\operatorname{Crit}(V).
\]
If these periodic points were dense in $K$, their closedness would imply
$K\subset\operatorname{Crit}(V)$, and $f_\tau$ would be the identity on
$K$.  Such a map is transitive only when $K$ is a singleton.
\end{proof}

\subsection{Li--Yorke, mean Li--Yorke, and distributional chaos}

Li and Yorke introduced the terminology of chaos through the existence
of pairs whose trajectories become arbitrarily close and repeatedly
separate by a definite amount \cite{LiYorke1975}.  For a continuous map
$f$ on a metric space $(K,d)$, a pair of distinct points $(x,y)$ is a
Li--Yorke pair if
\begin{equation}
 \liminf_{n\to\infty}d(f^n(x),f^n(y))=0,
 \qquad
 \limsup_{n\to\infty}d(f^n(x),f^n(y))>0.
 \label{eq:li-yorke-pair}
\end{equation}
A system is Li--Yorke chaotic if it contains an uncountable scrambled
set, meaning that every distinct pair in that set satisfies
\eqref{eq:li-yorke-pair}.  This notion has been extended to general
compact dynamical systems; see \cite{BlanchardGlasnerKolyadaMaass2002}.

The convergence theorem from Section~\ref{sec:dynamical-gradient} gives
more than the absence of an uncountable scrambled set: it excludes even
a single scrambled pair.

\begin{theorem}\label{thm:pairwise-distance-convergence}
Let $K$ be a compact invariant set and let $\tau>0$.  For every
$x,y\in K$ there exist equilibria $p,q\in K$ such that
\[
 f_\tau^n(x)\longrightarrow p,
 \qquad
 f_\tau^n(y)\longrightarrow q.
\]
Consequently,
\begin{equation}
 \lim_{n\to\infty}d(f_\tau^n(x),f_\tau^n(y))=d(p,q).
 \label{eq:distance-limit-discrete}
\end{equation}
The same conclusion holds in continuous time:
\[
 \lim_{t\to+\infty}d(\phi_t(x),\phi_t(y))=d(p,q).
\]
\end{theorem}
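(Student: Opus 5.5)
The plan is to reduce everything to Corollary~\ref{cor:cubic-bounded-convergence}, applied to the continuous-time orbits through $x$ and $y$, and then pass to the discrete time map and to distances by continuity.

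First fix $x\in K$. Since $K$ is compact and invariant, the forward trajectory $\phi_t(x)$, $t\ge0$, stays in $K$. It is therefore bounded and, by Lemma~\ref{lem:bounded-global}, global. Corollary~\ref{cor:cubic-bounded-convergence} (equivalently Theorem~\ref{thm:analytic-gradient-convergence}, since $V$ is a polynomial and hence real analytic) gives an equilibrium $p$ with $\phi_t(x)\to p$ as $t\to+\infty$. Because $K$ is closed and $\phi_t(x)\in K$ for all $t\ge0$, the limit satisfies $p\in K$. By Theorem~\ref{thm:omega-critical}, $p\in\operatorname{Crit}(V)$, or directly because $\omega(x)=\{p\}$ consists of equilibria. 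The same argument applied to $y$ gives $q\in K\cap\operatorname{Crit}(V)$ with $\phi_t(y)\to q$.

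Next, the discrete statement follows by restricting to the sequence of times $t=n\tau\to+\infty$. Since $f_\tau^n=\phi_{n\tau}|_K$, we get $f_\tau^n(x)\to p$ and $f_\tau^n(y)\to q$. The point $p$ is a fixed point of $f_\tau$, because it is an equilibrium of the flow.

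Finally, the distance function $d$ is continuous on $K\times K$; in fact $|d(a,b)-d(a',b')|\le d(a,a')+d(b,b')$. Hence both $d(\phi_t(x),\phi_t(y))\to d(p,q)$ and $d(f_\tau^n(x),f_\tau^n(y))\to d(p,q)$, which is \eqref{eq:distance-limit-discrete} together with its continuous-time counterpart.

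No step is a genuine obstacle, since the heavy lifting, namely convergence in the presence of nonisolated equilibria, is already done by the \L{}ojasiewicz argument in Theorem~\ref{thm:analytic-gradient-convergence}. The only point requiring care is to note that compact invariance of $K$ supplies both the boundedness hypothesis needed to invoke that theorem and the membership $p,q\in K$. It is also worth remarking, as the consequence the paper wants, that \eqref{eq:li-yorke-pair} fails for every pair: either $d(p,q)>0$ and the $\liminf$ is positive, or $d(p,q)=0$ and the $\limsup$ vanishes.
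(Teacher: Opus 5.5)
Your proposal is correct and follows the paper's proof essentially verbatim. Orbits in $K$ are bounded, they converge by Corollary~\ref{cor:cubic-bounded-convergence}, you restrict to the times $n\tau$, and you conclude by continuity of the distance; your explicit argument that $p,q\in K$ because $K$ is closed is a detail the paper leaves implicit.
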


\begin{proof}
Every orbit in $K$ is bounded.  By
Corollary~\ref{cor:cubic-bounded-convergence}, it converges to a single
equilibrium.  Continuity of the distance function gives
\eqref{eq:distance-limit-discrete} and its continuous-time analogue.
\end{proof}

\begin{corollary}\label{cor:no-li-yorke-mean}
No time map on a compact invariant set contains a Li--Yorke pair.  It
also contains no mean Li--Yorke pair, since the Ces\`aro averages
\[
 \frac1N\sum_{n=0}^{N-1}
 d(f_\tau^n(x),f_\tau^n(y))
\]
converge to the same limit $d(p,q)$.
\end{corollary}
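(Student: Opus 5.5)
The plan is to derive the corollary directly from Theorem~\ref{thm:pairwise-distance-convergence}, treating the two assertions separately.

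For Li--Yorke pairs, fix $\tau>0$ and distinct $x,y\in K$. Theorem~\ref{thm:pairwise-distance-convergence} supplies equilibria $p,q\in K$ with
\[
 a_n:=d\bigl(f_\tau^n(x),f_\tau^n(y)\bigr)\longrightarrow d(p,q).
\]
A convergent real sequence has equal $\liminf$ and $\limsup$. Hence the two conditions in \eqref{eq:li-yorke-pair} would require $d(p,q)=0$ and $d(p,q)>0$ at once, which is impossible. So no pair is a Li--Yorke pair, scrambled sets of cardinality at least two do not exist, and in particular $f_\tau$ is not Li--Yorke chaotic. For $\tau<0$ I would either restrict to $\tau>0$, following the convention fixed in \eqref{eq:time-map-section6}, or, if negative times are wanted, apply the same argument to the backward flow. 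The backward flow is the gradient flow of $-V$, which is again analytic, and $K$ is compact and invariant for it, so Theorem~\ref{thm:analytic-gradient-convergence} applies equally.

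For mean Li--Yorke pairs, recall the definition: a pair is mean Li--Yorke if the Ces\`aro averages $\frac1N\sum_{n<N}a_n$ have $\liminf$ equal to $0$ and $\limsup$ strictly positive. The only step is the elementary Ces\`aro lemma: if $a_n\to L$, then the averages also converge to $L$. I would prove it by splitting the sum at an index $M$ beyond which $|a_n-L|<\varepsilon$. The first $M$ terms contribute at most $M\operatorname{diam}(K)/N\to0$, since $K$ is compact and $a_n$ is bounded by $\operatorname{diam}(K)$, and the remaining terms contribute at most $\varepsilon$. The averages therefore converge, so their $\liminf$ and $\limsup$ coincide, and the same contradiction as above excludes mean Li--Yorke pairs. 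The same reasoning handles continuous-time averages $\frac1T\int_0^T d(\phi_t(x),\phi_t(y))\,\dd t$ using the continuous-time limit in Theorem~\ref{thm:pairwise-distance-convergence}.

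There is no real obstacle here, since all the analytic content sits in the \L{}ojasiewicz convergence theorem already proved. The only care needed is to use the exact definitions: strict positivity of the $\limsup$ for Li--Yorke pairs, and the Ces\`aro-averaged version for mean Li--Yorke pairs. Boundedness of $d$ on the compact set $K$ is what justifies the Ces\`aro splitting.
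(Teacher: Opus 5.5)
Your proposal is correct and follows the paper's own route: Theorem~\ref{thm:pairwise-distance-convergence} gives $d(f_\tau^n(x),f_\tau^n(y))\to d(p,q)$, which forces $\liminf=\limsup$ and so excludes Li--Yorke pairs, and the Ces\`aro lemma carries the same limit over to the averages, which excludes mean Li--Yorke pairs. Your additions are valid refinements of details the paper leaves implicit: the explicit Ces\`aro splitting, and the treatment of $\tau<0$ via the backward flow, which is the gradient flow of $-V$.
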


Distributional chaos refines the Li--Yorke idea by considering the
asymptotic frequency with which two trajectories are close
\cite{SchweizerSmital1994}.  For $s>0$, define
\begin{align*}
 \Phi_{xy}(s)
 &=\liminf_{N\to\infty}
 \frac1N\#\bigl\{0\leq n<N:
 d(f_\tau^n(x),f_\tau^n(y))<s\bigr\},\\
 \Phi^*_{xy}(s)
 &=\limsup_{N\to\infty}
 \frac1N\#\bigl\{0\leq n<N:
 d(f_\tau^n(x),f_\tau^n(y))<s\bigr\}.
\end{align*}
The standard types DC1, DC2, and DC3 require a nontrivial discrepancy
between these lower and upper distribution functions
\cite{BalibreaSmitalStefankova2005}.

\begin{theorem}\label{thm:no-distributional-chaos}
For any $x,y\in K$, let $L=d(p,q)$ be the limit in
\eqref{eq:distance-limit-discrete}.  Then
\[
 \Phi_{xy}(s)=\Phi^*_{xy}(s)=0
 \quad\text{for }0<s<L,
\]
and
\[
 \Phi_{xy}(s)=\Phi^*_{xy}(s)=1
 \quad\text{for }s>L.
\]
Thus a possible discrepancy can occur only at the single threshold
$s=L$, and no pair is distributionally scrambled in any of the standard
senses DC1, DC2, or DC3.
\end{theorem}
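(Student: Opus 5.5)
The plan is to reduce everything to Theorem~\ref{thm:pairwise-distance-convergence}. Set $d_n=d(f_\tau^n(x),f_\tau^n(y))$. That theorem gives $d_n\to L=d(p,q)$ as $n\to\infty$. The key observation is that the counting densities in $\Phi_{xy}$ and $\Phi^*_{xy}$ are insensitive to finitely many indices. So only the eventual behaviour of the sequence $(d_n)$ matters.

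First I treat $0<s<L$. Set $\varepsilon=L-s>0$. Convergence gives $N_0$ with $|d_n-L|<\varepsilon$ for all $n\ge N_0$, hence $d_n>s$ for $n\ge N_0$. Then
\[
\#\{0\le n<N: d_n<s\}\le N_0
\]
for every $N$. Dividing by $N$ and letting $N\to\infty$ gives $\Phi^*_{xy}(s)=0$. Since $0\le\Phi_{xy}\le\Phi^*_{xy}$, this yields $\Phi_{xy}(s)=\Phi^*_{xy}(s)=0$. If $L=0$ this range is empty and there is nothing to prove.

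Next I treat $s>L$, taking $\varepsilon=s-L$. For $n\ge N_0$ we now have $d_n<s$, so the count is at least $N-N_0$. Hence $\Phi_{xy}(s)\ge\lim_{N\to\infty}(N-N_0)/N=1$. Both densities are bounded by $1$, so both equal $1$.

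Finally I check the standard definitions. DC1 requires $\Phi^*_{xy}\equiv1$ on all $s>0$ together with $\Phi_{xy}(s_0)=0$ for some $s_0>0$. DC2 requires $\Phi^*_{xy}\equiv1$ together with $\Phi_{xy}<\Phi^*_{xy}$ at some $s$. DC3 requires $\Phi_{xy}(s)<\Phi^*_{xy}(s)$ on a nondegenerate interval of $s$.

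\begin{enumerate}[label=\textup{(\alph*)}]
\item If $L>0$, then $\Phi^*_{xy}(s)=0$ on $(0,L)$. This violates the condition $\Phi^*\equiv1$, so DC1 and DC2 both fail.
\item If $L=0$, then $\Phi_{xy}\equiv1$ on $(0,\infty)$. Then $\Phi_{xy}=\Phi^*_{xy}$ everywhere, so neither DC1 nor DC2 can hold.
\item In either case $\Phi_{xy}$ and $\Phi^*_{xy}$ agree except possibly at the single point $s=L$. A single point contains no nondegenerate interval, so DC3 fails.
\end{enumerate}

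The only delicate step is this last one. There I must state precisely which variant of DC1–DC3 is meant, following \cite{BalibreaSmitalStefankova2005}, and confirm that each variant requires a discrepancy on more than one threshold value. The limit computations themselves are routine.
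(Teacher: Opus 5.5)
Your proof is correct and follows the same route as the paper. Both use the convergence $d(f_\tau^n(x),f_\tau^n(y))\to L$ from Theorem~\ref{thm:pairwise-distance-convergence} to show that $d_n<s$ is eventually false for $s<L$ and eventually true for $s>L$, and then note that a discrepancy confined to the single threshold $s=L$ cannot satisfy DC1--DC3; your explicit split into the cases $L>0$ (where $\Phi^*_{xy}\not\equiv1$) and $L=0$ (where $\Phi_{xy}\equiv\Phi^*_{xy}\equiv1$ on $(0,\infty)$) just spells out what the paper states in one line.
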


\begin{proof}
If $s<L$, the convergence
\[
 d(f_\tau^n(x),f_\tau^n(y))\longrightarrow L
\]
implies that the inequality
$d(f_\tau^n(x),f_\tau^n(y))<s$ is eventually false.  Its asymptotic
frequency is therefore zero.  If $s>L$, the same inequality is
eventually true, and its asymptotic frequency is one.  The definitions
of DC1 and DC2 require a discrepancy on positive scales, while DC3
requires such a discrepancy throughout a nondegenerate interval.  A
single possible threshold cannot satisfy any of these conditions.
\end{proof}

\subsection{Entropy, invariant measures, and symbolic dynamics}

Topological entropy, introduced by Adler, Konheim, and McAndrew
\cite{AdlerKonheimMcAndrew1965}, measures the exponential growth of
orbit complexity.  Theorem~\ref{thm:zero-entropy} already shows that
all time maps have zero topological entropy on compact invariant sets.
We now refine that conclusion at the level of invariant measures.

\begin{theorem}\label{thm:invariant-measures-equilibria}
Let $\mu$ be an $f_\tau$-invariant Borel probability measure on $K$.
Then
\begin{equation}
 \operatorname{supp}\mu
 \subset K\cap\operatorname{Crit}(V).
 \label{eq:measure-support-critical}
\end{equation}
Every ergodic invariant probability measure is a Dirac measure
$\delta_p$ concentrated at one equilibrium $p\in K$.  Consequently,
\begin{equation}
 h_\mu(f_\tau)=0
 \label{eq:metric-entropy-zero}
\end{equation}
for every invariant probability measure $\mu$.
\end{theorem}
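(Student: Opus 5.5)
The plan is to combine Poincar\'e recurrence with the exclusion of nonstationary recurrence from Proposition~\ref{prop:immediate-exclusions}, and then deduce the statements on ergodic measures and entropy from the support statement \eqref{eq:measure-support-critical}. Fix $\tau>0$ and an $f_\tau$-invariant Borel probability measure $\mu$ on $K$.

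\textbf{Support.} Since $K$ is a compact metric space, Poincar\'e recurrence shows that $\mu$-almost every $\bx\in K$ is recurrent under $f_\tau$. That is, there exist $n_k\to\infty$ with $f_\tau^{n_k}(\bx)=\phi_{n_k\tau}(\bx)\to\bx$. Taking $t_k=n_k\tau\to+\infty$, Proposition~\ref{prop:immediate-exclusions}(ii) gives $\nabla V(\bx)=0$. Hence $\mu\bigl(K\cap\operatorname{Crit}(V)\bigr)=1$. Because $\operatorname{Crit}(V)$ is closed, $K\cap\operatorname{Crit}(V)$ is a closed set of full measure, so it contains $\operatorname{supp}\mu$. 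This proves \eqref{eq:measure-support-critical}.

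An alternative argument avoids recurrence and can serve as a cross-check. The function $V\circ f_\tau-V$ is continuous and nonnegative on $K$, and invariance of $\mu$ gives
\[
\int_K\bigl(V\circ f_\tau-V\bigr)\,\dd\mu=0.
\]
Therefore $V(\phi_\tau(\bx))=V(\bx)$ on $\operatorname{supp}\mu$, and by \eqref{eq:energy-balance} this forces $\nabla V\equiv0$ along $\phi_{[0,\tau]}(\bx)$ for such $\bx$. I would mention this as the cleaner argument, since it needs no measure-zero bookkeeping.

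\textbf{Ergodic measures.} On $K\cap\operatorname{Crit}(V)$ the map $f_\tau$ is the identity. Consequently every Borel subset $B$ of this set satisfies $f_\tau^{-1}(B)\cap\operatorname{supp}\mu=B$ up to a $\mu$-null set, so every Borel set is invariant modulo $\mu$. If $\mu$ is ergodic, every such set then has measure $0$ or $1$. Suppose $\operatorname{supp}\mu$ contained two distinct points $p\neq q$. Disjoint small balls around them would both have positive measure, which is impossible. Since the support of a probability measure is nonempty, $\mu=\delta_p$ for some equilibrium $p\in K$.

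\textbf{Entropy.} For general $\mu$, the system $(K,f_\tau,\mu)$ is measure-theoretically isomorphic to the identity on $(\operatorname{supp}\mu,\mu)$, and the identity has zero metric entropy. Indeed, for any finite partition $\xi$ we have $\bigvee_{i<n}f_\tau^{-i}\xi=\xi$ modulo $\mu$, so $H_\mu$ of the join stays bounded and $h_\mu(f_\tau,\xi)=0$. This gives \eqref{eq:metric-entropy-zero}.

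The only point needing care, and the one I expect to be the main obstacle, is passing from "$\mu$-almost every point" to a statement about the topological support. This is handled by the closedness of $\operatorname{Crit}(V)$, or avoided altogether by the integral argument above. Everything else is routine.
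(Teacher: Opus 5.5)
Your proof is correct and follows the paper's argument step for step. The support statement comes from Poincar\'e recurrence, Proposition~\ref{prop:immediate-exclusions}(ii) and closedness of $K\cap\operatorname{Crit}(V)$; the Dirac classification and zero entropy then follow because $f_\tau$ is the identity on $\operatorname{supp}\mu$. Your integral cross-check, $\int_K(V\circ f_\tau-V)\,\dd\mu=0$ with a continuous nonnegative integrand, is also valid and is a slightly cleaner route to \eqref{eq:measure-support-critical}, since it avoids recurrence altogether.
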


\begin{proof}
By Poincar\'e recurrence, $\mu$-almost every point is recurrent under
$f_\tau$.  Proposition~\ref{prop:immediate-exclusions} implies that all
such points are equilibria.  Since $K\cap\operatorname{Crit}(V)$ is
closed, \eqref{eq:measure-support-critical} follows.

On the support of $\mu$, the map $f_\tau$ is the identity.  If an
ergodic measure for the identity map had support containing two distinct
points, one could choose a Borel set of measure strictly between zero
and one; every Borel set is invariant under the identity, contradicting
ergodicity.  Hence every ergodic measure is a Dirac mass.  Finally, the
identity map has zero measure-theoretic entropy, proving
\eqref{eq:metric-entropy-zero}.
\end{proof}

\begin{corollary}\label{cor:no-entropy-chaos}
No compact invariant set supports positive topological entropy, positive
metric entropy, or an ergodic non-atomic invariant probability measure.
In particular, the flow has no chaotic invariant measure in the
entropy-theoretic sense.
\end{corollary}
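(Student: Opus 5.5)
The corollary is read off from Theorem~\ref{thm:zero-entropy} and Theorem~\ref{thm:invariant-measures-equilibria}. I would treat its three assertions in turn, all for the time map $f_\tau=\phi_\tau|_K$ on a compact invariant set $K$.

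\emph{Topological entropy.} Vanishing of topological entropy is exactly \eqref{eq:zero-topological-entropy} for every $\tau\neq0$. For negative times, I would use that $h_{\mathrm{top}}(f^{-1})=h_{\mathrm{top}}(f)$ for a homeomorphism, as already noted in that proof.

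\emph{Metric entropy.} This is \eqref{eq:metric-entropy-zero}. The relevant measures are any $f_\tau$-invariant Borel probability measures on $K$, including flow-invariant ones, since every flow-invariant measure is invariant under each time map.

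\emph{Non-atomic ergodic measures.} Let $\mu$ be ergodic and non-atomic. Theorem~\ref{thm:invariant-measures-equilibria} forces $\mu=\delta_p$ for some equilibrium $p\in K$. A Dirac mass is atomic, which is a contradiction.

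The phrase ``chaotic invariant measure in the entropy-theoretic sense'' I would interpret as an invariant measure with $h_\mu>0$, or an ergodic measure with nontrivial (non-atomic) support. Both are excluded by the two previous assertions.

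I do not expect a genuine obstacle here. The only point needing care is that the support inclusion \eqref{eq:measure-support-critical} uses Poincar\'e recurrence under the discrete map $f_\tau$. A point recurrent for $f_\tau$ along times $n_k\tau\to+\infty$ is positively recurrent for the flow, so Proposition~\ref{prop:immediate-exclusions}(ii) applies. I would add one sentence making this explicit.
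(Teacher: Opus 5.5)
Your proposal is correct and follows the paper's route. The paper gives this corollary no separate proof and treats it as an immediate consequence of Theorem~\ref{thm:zero-entropy} and Theorem~\ref{thm:invariant-measures-equilibria}, exactly as you do, and your remark that $f_\tau$-recurrence along $n_k\tau\to+\infty$ is positive recurrence for the flow (so Proposition~\ref{prop:immediate-exclusions}(ii) applies) makes explicit a step the paper leaves implicit.
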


A standard mechanism for chaos is the presence of a Smale horseshoe or,
more generally, a compact invariant subsystem conjugate or
semiconjugate to a full shift on at least two symbols
\cite{Smale1967}.  Such a shift has positive topological entropy.

\begin{corollary}\label{cor:no-horseshoe}
No time map $f_\tau$ possesses a compact invariant subset on which it is
conjugate to a full shift on two or more symbols.  More generally, no
compact invariant subsystem can factor onto such a shift.  Therefore,
Smale horseshoes and the usual symbolic mechanisms for chaotic dynamics
are impossible.
\end{corollary}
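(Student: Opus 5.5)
The plan is to derive Corollary~\ref{cor:no-horseshoe} from the zero-entropy results (Theorem~\ref{thm:zero-entropy} and Theorem~\ref{thm:invariant-measures-equilibria}). The argument is by contradiction, using the monotonicity of topological entropy under factor maps.

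First, I would treat the conjugacy case. Suppose $\Lambda\subset K$ is a compact $f_\tau$-invariant subset and $h\colon\Lambda\to\Sigma_m$, with $m\ge2$, is a homeomorphism satisfying $h\circ f_\tau=\sigma\circ h$. Topological entropy is a conjugacy invariant, so $h_{\mathrm{top}}(f_\tau|_\Lambda)=h_{\mathrm{top}}(\sigma)=\log m>0$. However, $\Lambda$ is a compact subset of $\R^3$ invariant under the time-$\tau$ map. To apply Theorem~\ref{thm:zero-entropy}, which is stated for flow-invariant compact sets, I would either replace $\Lambda$ by its flow saturation $\Lambda'=\bigcup_{s\in[0,\tau]}\phi_s(\Lambda)$, or bypass that theorem and argue directly with measures. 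The set $\Lambda'$ is compact as the continuous image of $[0,\tau]\times\Lambda$, and it is flow-invariant because $\phi_\tau(\Lambda)=\Lambda$. It contains $\Lambda$, and entropy is monotone under restriction to closed invariant subsets, so $h_{\mathrm{top}}(f_\tau|_\Lambda)\le h_{\mathrm{top}}(f_\tau|_{\Lambda'})=0$. This is a contradiction.

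Second, I would handle the general factor case. Let $\pi\colon\Lambda\to\Sigma_m$ be a continuous surjection intertwining $f_\tau|_\Lambda$ and $\sigma$. I would use the standard inequality $h_{\mathrm{top}}(\sigma)\le h_{\mathrm{top}}(f_\tau|_\Lambda)$ for factors. For a self-contained measure-theoretic alternative, I would lift the Bernoulli measure on $\Sigma_m$ to an $f_\tau$-invariant measure $\mu$ on $\Lambda$ with $\pi_*\mu$ equal to that Bernoulli measure. Such a lift exists by Hahn--Banach or Markov--Kakutani averaging of any preimage measure, followed by taking weak-$*$ limits of Ces\`aro averages. By Theorem~\ref{thm:invariant-measures-equilibria}, or by the same Poincar\'e recurrence argument applied on $\Lambda$, $\mu$ is supported on equilibria, where $f_\tau$ is the identity. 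Then $\pi\circ f_\tau=\pi$ on $\operatorname{supp}\mu$, so $\sigma$ acts as the identity on $\pi(\operatorname{supp}\mu)$, which carries full Bernoulli measure. This is absurd, since the fixed points of $\sigma$ form a finite set of Bernoulli measure zero.

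The main obstacle is the mismatch between map-invariant and flow-invariant sets. I expect the saturation trick to resolve it. Alternatively, the Poincar\'e recurrence argument from the proof of Theorem~\ref{thm:zero-entropy} works verbatim on any compact $f_\tau$-invariant set, since Proposition~\ref{prop:immediate-exclusions} concerns recurrence along the sequence $t_n=n\tau$. The final sentence of the corollary, about Smale horseshoes, then follows because a horseshoe is by definition a compact invariant set on which some iterate is conjugate to a full shift. Passing from $f_\tau^k$ to $f_{k\tau}$ is harmless, since $\tau>0$ is arbitrary.
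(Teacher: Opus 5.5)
Your proposal is correct and follows the same route as the paper: a full shift on $m\ge2$ symbols has entropy $\log m>0$, and topological entropy is invariant under conjugacy and does not increase under restriction to closed invariant subsystems or under passage to factors, so any such structure would contradict Theorem~\ref{thm:zero-entropy}. Your flow-saturation step and your Bernoulli-lift argument for factors are both sound, but they are additional care rather than a different approach; the saturation is not even needed in the paper's setting, where the subset already lies in a flow-invariant compact set $K$.
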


\begin{proof}
A full shift on $m\geq2$ symbols has entropy $\log m>0$.  Entropy is
preserved by conjugacy, does not increase when restricting to a
subsystem, and cannot increase when passing from a system to one of its
factors.  Any of the stated symbolic structures would therefore force
positive entropy for $f_\tau|_K$, contradicting
Theorem~\ref{thm:zero-entropy}.
\end{proof}

\subsection{Chain recurrence and gradient-like dynamics}

Conley's theory isolates chain recurrence as the part of a dynamical
system that remains after all gradient-like motion has been removed
\cite{Conley1978}.  A strict Lyapunov function is constant on chain
recurrent components and strictly monotone outside the chain recurrent
set.

\begin{proposition}\label{prop:chain-recurrent-critical}
For the restriction of the flow to a compact invariant set $K$, every
chain recurrent point belongs to $K\cap\operatorname{Crit}(V)$.  Thus
the chain recurrent dynamics is the identity dynamics on a compact
subset of equilibria.
\end{proposition}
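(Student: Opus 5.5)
We must show that every chain recurrent point of $\phi|_K$ is an equilibrium. The plan is to argue by contradiction, using $V$ as a strict Lyapunov function and quantifying the strict increase at a noncritical point. We work with the standard definition: $x\in K$ is chain recurrent if for every $\varepsilon>0$ and $T>0$ there is an $(\varepsilon,T)$-chain $x=x_0,x_1,\dots,x_n=x$ in $K$ with times $t_i\geq T$ and $d(\phi_{t_i}(x_i),x_{i+1})<\varepsilon$.

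\textbf{Step 1: a definite gain near $p$.} Suppose $p\in K$ is chain recurrent but $\nabla V(p)\neq0$. As in the proof of Proposition~\ref{prop:nonwandering-critical}, pick $\tau>0$ and $\delta>0$ with $V(\phi_\tau(p))-V(p)=2\delta>0$. By continuity of the flow on the compact set $K$, there is a neighbourhood $U$ of $p$ in $K$ such that
\[
V(\phi_s(y))\geq V(y)+\delta
\qquad\text{for all }y\in U,\ s\geq\tau,
\]
using monotonicity of $V$ along trajectories for $s\geq\tau$.

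\textbf{Step 2: uniformly small losses at the jumps.} Since $V$ is uniformly continuous on a compact neighbourhood of $K$, for each $\eta>0$ there is $\varepsilon>0$ such that every $\varepsilon$-jump changes $V$ by less than $\eta$.

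\textbf{Step 3: the contradiction.} Take $T\geq\tau$ and an $(\varepsilon,T)$-chain from $p$ to $p$. Along each flow segment $V$ does not decrease, and each jump costs less than $\eta$. The first segment starts at $x_0=p\in U$ and gains at least $\delta$. Summing around the chain gives
\[
0=V(x_n)-V(x_0)\geq\delta-n\eta .
\]
This is not yet a contradiction, because the number of jumps $n$ is not controlled.

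\textbf{Main obstacle: controlling $n$.} The chain length can grow as $\varepsilon\to0$, so the accumulated jump losses $n\eta$ need not stay below $\delta$. I would resolve this using Conley's fundamental theorem, which fits the Conley-theoretic framing the paper has just invoked. A continuous function that strictly increases along nonequilibrium orbits of the compact flow $\phi|_K$ and whose critical values $V(K\cap\operatorname{Crit}(V))$ have empty interior is a complete Lyapunov function for $\phi|_K$. Its chain recurrent set is then contained in the set of points where it is stationary, namely $K\cap\operatorname{Crit}(V)$. So the key check is that the set of critical values of $V$ on $K$ has empty interior. For polynomial $V$ this holds because $\operatorname{Crit}(V)$ is an algebraic set with finitely many connected components, and $V$ is constant on each component by the semialgebraic Sard theorem. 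Hence the critical values form a finite set.

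With the finiteness in hand, a direct argument is also available. Choose a regular value $c$ with $V(p)<c<V(\phi_\tau(p))$. Let $A=\{x\in K: V(x)\geq c\}$, which is the $\omega$-limit of its own neighbourhood, so it is an attractor for $\phi|_K$. The point $p$ lies in the basin of $A$ but not in $A$. Conley's attractor--repeller argument then shows that $\varepsilon$-chains starting near $p$ cannot return to $p$, since that would require crossing the level $c$ downward by small jumps against a uniform inward flow.

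\textbf{Conclusion.} Either route gives $p\in\operatorname{Crit}(V)$. On $K\cap\operatorname{Crit}(V)$ the flow is the identity, which is the final assertion of the statement.
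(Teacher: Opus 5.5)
Your argument is correct and uses the same mechanism as the paper ($V$ as a Lyapunov function for $\phi|_K$ plus Conley theory), but it differs at the one point that matters. The paper asserts that, by Conley's fundamental theorem, no point at which a Lyapunov function is strict can be chain recurrent. As a general principle this is false without a hypothesis on the values taken on the non-strict set, because chains can lose height by drifting in tiny jumps along a continuum of equilibria on which the function is not constant. For example, take an arc of equilibria on which $L$ takes all values in $[-1,0]$, together with a single orbit running from the point where $L=-1$ to the point where $L=0$. Every point of that orbit is chain recurrent, although $L$ is strict there.

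What is needed is that $V(K\cap\operatorname{Crit}(V))$ have empty interior. Your observation that a polynomial has only finitely many critical values is exactly the ingredient that makes the paper's citation legitimate. Your level-set argument then makes the proof self-contained:
\begin{itemize}
\item Take $c\notin V(\operatorname{Crit}(V))$ with $V(p)<c<V(\phi_\tau(p))$, and set $N=\{x\in K:V(x)\ge c\}$.
\item Regularity of $c$ and compactness give $c':=\min_{\phi_T(N)}V>c$.
\item Choose $\varepsilon$ so small that $\varepsilon$-jumps change $V$ by less than $\min\{c'-c,\,V(\phi_\tau(p))-c\}$.
\item Then every jump point of an $(\varepsilon,T)$-chain from $p$ with $T\ge\tau$ lies in $N$, which is incompatible with the chain ending at $p$.
\end{itemize}
So the paper's route is shorter but leans on an unstated hypothesis. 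Yours shows precisely where the polynomial (or at least Sard-type) nature of $V$ enters.

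Two small wording fixes. First, the set $\{x\in K:V(x)\ge c\}$ is only forward invariant, so it is a trapping region rather than an attractor. The attractor is $\bigcap_{t\ge0}\phi_t(N)$, with $p$ in its basin but outside it; alternatively, use the estimate above and drop the attractor language. Second, calling $V$ a ``complete Lyapunov function'' presupposes the conclusion. What you actually use is the criterion that a Lyapunov function strict off a closed invariant set $E$, with $V(E)$ of empty interior, forces the chain recurrent set into $E$.
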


\begin{proof}
The potential $V|_K$ is a strict Lyapunov function: it is strictly
increasing along every nonstationary orbit and constant precisely on
equilibria.  The fundamental theorem of Conley theory implies that no
point at which a Lyapunov function is strict can be chain recurrent.
Hence the chain recurrent set is contained in
$K\cap\operatorname{Crit}(V)$.  The reverse inclusion is immediate,
since every equilibrium is chain recurrent.
\end{proof}

If the equilibrium set contains a connected continuum, that continuum
may be chain connected through arbitrarily small jumps.  This does not
represent dynamical chaos: the actual flow is the identity on the
continuum.  The distinction between true trajectories and pseudo-orbits
is essential here.

\subsection{Attractors and the absence of strange attractors}

The expression \emph{strange attractor} has no single universally
accepted definition.  In the classical literature it is associated with
nonperiodic recurrent motion, transitivity, sensitive dependence,
expansion, symbolic dynamics, or positive entropy
\cite{RuelleTakens1971,Milnor1985}.  Every one of these dynamical
features has already been excluded on compact invariant sets.

\begin{proposition}\label{prop:compact-attractors}
Let $A$ be a compact invariant attracting set for the gradient flow.
Then:
\begin{enumerate}[label=\textup{(\roman*)}]
\item every trajectory in $A$ converges to an equilibrium in $A$;
\item $\operatorname{NW}(\phi|_A)=A\cap\operatorname{Crit}(V)$;
\item every time map on $A$ has zero topological entropy;
\item $A$ contains no Li--Yorke or distributionally scrambled pair;
\item if the flow on $A$ is topologically transitive, then $A$ is a
 singleton equilibrium.
\end{enumerate}
Thus a compact attracting set may contain equilibria and
heteroclinic connections, but it cannot be transitive, recurrently
nonstationary, scrambled, or of positive entropy.
\end{proposition}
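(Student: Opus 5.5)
The proof reduces each item to a result already established for arbitrary compact invariant sets. The attracting property of $A$ is never used; only compactness and invariance matter. Throughout, $V$ is a polynomial of degree at most three, hence real analytic, and the flow on $A$ is complete.

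For item (i), let $\bx\in A$. By invariance its forward trajectory stays in the compact set $A$, so it is bounded. Corollary~\ref{cor:cubic-bounded-convergence} (equivalently Theorem~\ref{thm:analytic-gradient-convergence}) gives convergence to a single equilibrium $p$. Since $A$ is closed, $p\in A$.

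Items (ii) and (iii) follow by taking $K=A$ in Proposition~\ref{prop:nonwandering-critical} and Theorem~\ref{thm:zero-entropy}; the time-$\tau$ maps for $\tau<0$ are covered by the remark in that proof. For item (iv), take $K=A$ in Theorem~\ref{thm:pairwise-distance-convergence}: every pair distance $d(f_\tau^n(x),f_\tau^n(y))$ converges. This rules out Li--Yorke pairs via Corollary~\ref{cor:no-li-yorke-mean}, and rules out DC1--DC3 pairs via Theorem~\ref{thm:no-distributional-chaos}.

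Item (v) is the only point needing care, because the statement speaks of transitivity of the \emph{flow}, while Theorem~\ref{thm:no-transitivity} is phrased for the time maps. I would repeat its argument directly for the flow. Transitivity of the flow means that for all nonempty relatively open $U,W\subset A$ there is $t\ge0$ with $\phi_t(U)\cap W\neq\varnothing$.

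\begin{itemize}
\item If $V|_A$ is not constant, take the sets $U=\{V>\alpha\}$ and $W=\{V<\beta\}$ with $\alpha>\beta$, as in that proof. Monotonicity (Corollary~\ref{cor:monotonicity}) gives $\phi_t(U)\cap W=\varnothing$ for every $t\ge0$.
\item If $V|_A$ is constant, invariance and the energy identity force $A\subset\operatorname{Crit}(V)$. The flow is then the identity on $A$, and it can be transitive only if $A$ is a single point.
\end{itemize}

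Alternatively, one can observe that a transitive flow on a compact metric space has a dense forward orbit. Its $\omega$-limit set is then all of $A$, while by (i) that $\omega$-limit set is a single equilibrium, so $A$ is a singleton.

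The closing sentence of the proposition summarizes (i)--(v); heteroclinic connections are not excluded, consistent with the remark after Theorem~\ref{thm:main-gradient-dynamics}. If anything is an obstacle, it is only the bookkeeping in (v): either the flow-to-time-map reduction or the dense-orbit argument. The latter needs the standard fact that, on a compact metric space without isolated points, transitivity yields a dense forward orbit. The singleton case needs no argument.
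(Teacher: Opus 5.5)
Your proposal is correct and matches the paper's proof: the paper also just specializes Corollary~\ref{cor:cubic-bounded-convergence}, Proposition~\ref{prop:nonwandering-critical}, Theorem~\ref{thm:zero-entropy}, Theorems~\ref{thm:pairwise-distance-convergence}--\ref{thm:no-distributional-chaos} and Theorem~\ref{thm:no-transitivity} to $K=A$, and likewise never uses the attracting property. Your flow-level rerun of the level-set argument in (v) adds care that the paper's bare citation of the time-map result lacks, and your forward-time convention $t\ge0$ is genuinely needed, since under a two-sided definition the closure of a single heteroclinic orbit would be a transitive non-singleton.
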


\begin{proof}
The first four conclusions follow from
Corollary~\ref{cor:cubic-bounded-convergence},
Proposition~\ref{prop:nonwandering-critical},
Theorem~\ref{thm:zero-entropy}, and
Theorems~\ref{thm:pairwise-distance-convergence}--
\ref{thm:no-distributional-chaos}.  The last conclusion is
Theorem~\ref{thm:no-transitivity}.
\end{proof}

\subsection{Lyapunov exponents: what is and is not excluded}

Lyapunov exponents describe infinitesimal growth rates of tangent
vectors and are formalized by the multiplicative ergodic theorem of
Oseledets \cite{Oseledets1968}.  A positive largest Lyapunov exponent is
often used as a numerical indicator of chaos.  Taken alone, however, it
is not equivalent to chaotic dynamics.

\begin{proposition}\label{prop:lyapunov-equilibria}
Let $\mu$ be an ergodic invariant probability measure on a compact
invariant set $K$.  Then $\mu=\delta_p$ for some equilibrium $p$.  The
Lyapunov exponents of the flow with respect to $\mu$ are the eigenvalues
of
\[
 DF(p)=\operatorname{Hess}V(p),
\]
and are therefore real.  They may include positive values when $p$ is
unstable, even though the measure is supported on a single fixed point
and has zero entropy.
\end{proposition}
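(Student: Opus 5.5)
The argument has two steps: identify the ergodic measure, then compute its Lyapunov exponents from the linearized flow at a fixed point.

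\emph{Step 1: the measure is a Dirac mass.} Regard $\mu$ as an ergodic measure for a time map $f_\tau$ on $K$ with $\tau>0$ (if $\mu$ is flow-invariant and flow-ergodic, pass to the time map). Theorem~\ref{thm:invariant-measures-equilibria} gives $\operatorname{supp}\mu\subset K\cap\operatorname{Crit}(V)$ and shows every ergodic measure is $\delta_p$ for an equilibrium $p$. If ergodicity is only assumed for the flow and not for a particular $f_\tau$, the conclusion still holds. The support consists of fixed points, so every Borel set is invariant, and flow-ergodicity forces a single atom. This settles $\mu=\delta_p$ and, by \eqref{eq:metric-entropy-zero}, zero entropy.

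\emph{Step 2: the exponents at $p$.} With respect to $\delta_p$, the Oseledets exponents are the exponential growth rates of the linearized cocycle along the constant orbit $\phi_t(p)=p$. The variational equation is $\dot\xi=DF(p)\xi$, so $D\phi_t(p)=e^{tDF(p)}$, and by \eqref{eq:tensor-hessian}, $DF(p)=\Hess V(p)$. This matrix is real symmetric. By the spectral theorem it is orthogonally diagonalizable with real eigenvalues $\lambda_1,\lambda_2,\lambda_3$. In an orthonormal eigenbasis $e^{tDF(p)}$ is diagonal with entries $e^{t\lambda_i}$. Hence $\lim_{t\to\infty}t^{-1}\log\|D\phi_t(p)\xi\|=\lambda_i$ for $\xi$ in the $\lambda_i$-eigenspace, and more generally equals the largest $\lambda_i$ among the components of $\xi$. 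The Oseledets filtration is therefore built from sums of eigenspaces, and the exponents, counted with multiplicity, are exactly the eigenvalues of $\Hess V(p)$. Symmetry also removes Jordan-block polynomial factors, although these would not affect the exponents anyway.

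\emph{Step 3: positive exponents are possible.} I would give an explicit example, such as $V=\tfrac12(x^2-y^2-z^2)$ with $p=0$. Here $\Hess V(0)$ has the positive eigenvalue $1$, $p$ is an equilibrium, $\delta_p$ is invariant and ergodic, and its entropy is zero by Theorem~\ref{thm:invariant-measures-equilibria}.

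The main obstacle is mild. It lies in Step 1, specifically in reconciling flow-ergodicity with time-map ergodicity, and the fact that the support consists of fixed points handles it. Everything else is the spectral theorem.
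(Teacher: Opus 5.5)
Your proposal is correct and follows essentially the same route as the paper: you identify $\mu=\delta_p$ via Theorem~\ref{thm:invariant-measures-equilibria}, then read off the exponents from the variational equation $\dot\xi=DF(p)\xi$, where $DF(p)=\Hess V(p)$ is symmetric and hence orthogonally diagonalizable with real eigenvalues. Your extra care in passing from flow-ergodicity to a time map, and your explicit example $V=\tfrac12(x^2-y^2-z^2)$ showing that a positive exponent can occur, are sound refinements of points the paper leaves implicit.
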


\begin{proof}
The classification $\mu=\delta_p$ follows from
Theorem~\ref{thm:invariant-measures-equilibria}.  Along the stationary
orbit $p$, the variational equation is
\[
 \dot\xi=DF(p)\xi.
\]
Since $DF(p)$ is symmetric, it is orthogonally diagonalizable with real
eigenvalues.  The exponential growth rates are precisely those
eigenvalues.  The entropy is zero by
Theorem~\ref{thm:invariant-measures-equilibria}.
\end{proof}

\subsection{A precise resolution for bounded dynamics}

We can now combine the gradient representation from
Sections~\ref{sec:gradient-structure}--\ref{sec:cubic-potentials}, the
convergence theorem from Section~\ref{sec:dynamical-gradient}, and the
exclusions proved above.

\begin{theorem}[Exclusion theorem for the Sprott--Zeraoulia class]
\label{thm:exclusion-standard-chaos}
Let
\[
 \dot{\bx}=F(\bx),
 \qquad \bx\in\R^3,
\]
be a polynomial differential system of degree at most two whose
Jacobian matrix is symmetric at every point.  Then every bounded forward
trajectory converges to a single equilibrium.  Moreover, on every
compact invariant set and for every nonzero time map, the system has:
\begin{enumerate}[label=\textup{(\roman*)}]
\item no nonconstant periodic orbit, homoclinic orbit, or heteroclinic
 cycle;
\item no nonstationary recurrent or nonwandering dynamics;
\item no topological transitivity, weak mixing, or topological mixing on
 a set containing more than one point;
\item no Devaney or Auslander--Yorke chaos;
\item no Li--Yorke, mean Li--Yorke, or distributional chaos;
\item zero topological entropy and zero metric entropy for every
 invariant probability measure;
\item no Smale horseshoe, full-shift subsystem, or positive-entropy
 symbolic factor;
\item no nontrivial transitive or positive-entropy strange attractor;
\item only equilibrium-supported ergodic invariant probability measures.
\end{enumerate}
Consequently, the system admits no bounded chaotic dynamics under any
of the standard topological, metric, symbolic, recurrence-based, or
attractor-based definitions listed above.
\end{theorem}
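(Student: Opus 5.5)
The theorem is a synthesis statement, so the plan is to assemble it from results already proved rather than introduce new arguments. First, by Proposition~\ref{prop:constraints}, Theorem~\ref{thm:symmetric-gradient} and Proposition~\ref{prop:radial-potential}, the system is written as $\dot{\bx}=\nabla V(\bx)$ with $V$ a polynomial of degree at most three, hence real analytic (Corollary~\ref{cor:degree}). Convergence of every bounded forward trajectory to a single equilibrium is then Corollary~\ref{cor:cubic-bounded-convergence}, that is, Theorem~\ref{thm:analytic-gradient-convergence} applied to $V$. Since $K$ is compact and invariant, every orbit in $K$ is bounded and complete, so this convergence applies to every point of $K$ in forward time.

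Next, each item is matched to the corresponding earlier result. Item (i) is Proposition~\ref{prop:immediate-exclusions}(i),(iii),(iv). Item (ii) combines Proposition~\ref{prop:immediate-exclusions}(ii) with Proposition~\ref{prop:nonwandering-critical} and, for the chain recurrent refinement, Proposition~\ref{prop:chain-recurrent-critical}. Item (iii) follows from Theorem~\ref{thm:no-transitivity} and Corollary~\ref{cor:no-mixing}. Item (iv) is Theorem~\ref{thm:no-devaney-ay}. Item (v) follows from Theorem~\ref{thm:pairwise-distance-convergence}, Corollary~\ref{cor:no-li-yorke-mean} and Theorem~\ref{thm:no-distributional-chaos}; these exclude even a single scrambled pair, so a fortiori an uncountable scrambled set. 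Item (vi) is Theorem~\ref{thm:zero-entropy} together with Theorem~\ref{thm:invariant-measures-equilibria}. Item (vii) is Corollary~\ref{cor:no-horseshoe}. Item (ix) is the Dirac-mass classification in Theorem~\ref{thm:invariant-measures-equilibria}.

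The one step needing care is the \emph{nonzero time map} clause. The earlier results are stated for $\tau>0$, so negative times must be handled separately.
\begin{itemize}[label=\textup{--}]
\item For entropy, $f_{-\tau}=f_\tau^{-1}$ and a homeomorphism has the same topological and metric entropy as its inverse, as already noted in the proof of Theorem~\ref{thm:zero-entropy}.
\item For transitivity, the argument of Theorem~\ref{thm:no-transitivity} works verbatim with the roles of $U$ and $W$ swapped, since $V$ is nonincreasing along backward orbits.
\item For Li--Yorke and distributional notions, the backward flow is the gradient flow of $-V$, which is again analytic. Since $K$ is invariant for the full flow, Theorem~\ref{thm:analytic-gradient-convergence} applied to $-V$ gives backward convergence of every point of $K$ to an equilibrium, and the same pairwise-distance argument applies.
\end{itemize}

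Finally, item (viii) is handled by fixing interpretations. A strange attractor in any of the cited senses is a compact invariant set that is transitive, nonstationarily recurrent, or of positive entropy. Proposition~\ref{prop:compact-attractors}, together with items (ii), (iii) and (vi), shows that any such set is a single equilibrium, hence trivial. The concluding sentence of the theorem then simply restates the conjunction of items (i)--(ix).
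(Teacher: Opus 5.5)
Your proposal is correct and is essentially the paper's own proof: the paper likewise obtains the gradient representation and bounded convergence from Corollary~\ref{cor:cubic-bounded-convergence}, then matches items (i)--(ix) one by one to Propositions~\ref{prop:immediate-exclusions}, \ref{prop:nonwandering-critical}, \ref{prop:compact-attractors}, Theorems~\ref{thm:no-transitivity}, \ref{thm:no-devaney-ay}, \ref{thm:zero-entropy}, \ref{thm:no-distributional-chaos}, \ref{thm:invariant-measures-equilibria}, and Corollaries~\ref{cor:no-mixing}, \ref{cor:no-li-yorke-mean}, \ref{cor:no-horseshoe}. Your explicit treatment of negative time maps is a sound refinement of a point the paper leaves implicit; it uses $f_{-\tau}=f_\tau^{-1}$ for entropy and transitivity, and the analytic gradient flow of $-V$ for backward convergence on $K$.
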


\begin{proof}
Symmetry of the Jacobian implies $F=\nabla V$ for a polynomial potential
of degree at most three.  Every bounded trajectory converges to one
equilibrium by
Corollary~\ref{cor:cubic-bounded-convergence}.  Item~\textup{(i)} is
Proposition~\ref{prop:immediate-exclusions}; item~\textup{(ii)} follows
from Propositions~\ref{prop:immediate-exclusions} and
\ref{prop:nonwandering-critical}; item~\textup{(iii)} follows from
Theorem~\ref{thm:no-transitivity} and
Corollary~\ref{cor:no-mixing}; item~\textup{(iv)} is
Theorem~\ref{thm:no-devaney-ay}; item~\textup{(v)} follows from
Corollary~\ref{cor:no-li-yorke-mean} and
Theorem~\ref{thm:no-distributional-chaos}; item~\textup{(vi)} follows
from Theorems~\ref{thm:zero-entropy} and
\ref{thm:invariant-measures-equilibria}; item~\textup{(vii)} is
Corollary~\ref{cor:no-horseshoe}; item~\textup{(viii)} follows from
Proposition~\ref{prop:compact-attractors}; and item~\textup{(ix)} is contained in
Theorem~\ref{thm:invariant-measures-equilibria}.
\end{proof}

\begin{corollary}[Precise Sprott--Zeraoulia statement]
\label{cor:sprott-zeraoulia-resolved}
Three-dimensional quadratic continuous-time systems with symmetric
Jacobian matrices cannot exhibit bounded chaos in any of the standard
rigorous senses covered by
Theorem~\ref{thm:exclusion-standard-chaos}.  In particular, they cannot
possess a compact chaotic attractor.
\end{corollary}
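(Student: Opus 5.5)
The corollary is a restatement of Theorem~\ref{thm:exclusion-standard-chaos}, so the plan is to reduce it to that theorem and then pin down what ``compact chaotic attractor'' must mean.

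First, we fix the class. By Proposition~\ref{prop:constraints}, Theorem~\ref{thm:symmetric-gradient} and Corollary~\ref{cor:degree}, any three-dimensional quadratic system with symmetric Jacobian is $\dot{\bx}=\nabla V(\bx)$ with $V$ a real-analytic polynomial of degree at most three. All hypotheses of Theorem~\ref{thm:exclusion-standard-chaos} therefore hold.

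Second, we interpret ``bounded chaos.'' Any bounded chaotic behaviour must live on a compact invariant set. For a bounded forward trajectory this set is the closure of the orbit, and for asymptotic behaviour it is the omega-limit set, which is compact and invariant by Theorem~\ref{thm:omega-critical}. Every rigorous notion on the list (transitivity, mixing, Devaney, Auslander--Yorke, Li--Yorke, mean Li--Yorke, distributional chaos, positive topological or metric entropy, horseshoes and symbolic factors, non-atomic ergodic measures) is a property of a time map restricted to such a set. Each of these is excluded by one of items (i)--(ix) of Theorem~\ref{thm:exclusion-standard-chaos}.

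Third, we treat attractors. Let $A$ be a compact attractor, in whichever standard sense: an attracting set, a Milnor attractor, or an omega-limit set of an open set. In each case $A$ is compact and invariant, so Proposition~\ref{prop:compact-attractors} applies. Every orbit in $A$ converges to an equilibrium, the nonwandering set is $A\cap\operatorname{Crit}(V)$, the entropy is zero, no scrambled pair exists, and transitivity forces $A$ to be a single point.

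The main obstacle is definitional rather than technical. ``Chaotic attractor'' is sometimes defined through a positive Lyapunov exponent, and that criterion is not excluded: Proposition~\ref{prop:lyapunov-equilibria} shows that an unstable equilibrium can carry positive exponents. We would handle this by stating explicitly that a chaotic attractor is understood to require at least one of the listed dynamical properties, such as transitivity, recurrence, positive entropy, or scrambling. We would also note that any ergodic measure supported on $A$ is a Dirac mass at an equilibrium, so positive exponents reflect only local instability and never a chaotic invariant measure. With that convention fixed, the corollary follows immediately.
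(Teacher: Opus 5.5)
Your proposal is correct and follows the paper's route. The paper states this corollary without a separate proof, as an immediate consequence of Theorem~\ref{thm:exclusion-standard-chaos} (item (viii), via Proposition~\ref{prop:compact-attractors}), and it treats the Lyapunov-exponent caveat exactly as you do through Proposition~\ref{prop:lyapunov-equilibria}. One small precision: the closure of a bounded forward orbit is only positively invariant, so for single trajectories either cite Corollary~\ref{cor:cubic-bounded-convergence} directly or pass to the omega-limit set, as you also do.
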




\section{Dynamics at Infinity and the Poincar\'e Compactification}
\label{sec:infinity}

The preceding sections describe completely every trajectory that has a
finite accumulation point.  By
Corollary~\ref{cor:finite-or-infinity-dichotomy}, the only remaining
possibility is genuine escape to infinity.  We now compactify the phase
space and prove that the gradient structure survives on the sphere at
infinity.

The Poincar\'e compactification identifies $\R^3$ with the upper
hemisphere
\[
 \mathbb S^3_+
 =\{(y,s)\in\mathbb S^3:s>0\}
\]
through the central projection
\begin{equation}
 h(\bx)=\frac{(\bx,1)}{\sqrt{1+\|\bx\|^2}}.
 \label{eq:poincare-central-projection}
\end{equation}
The equator
\begin{equation}
 \mathbb S^2_\infty
 =\{(u,0)\in\mathbb S^3:u\in\mathbb S^2\}
 \label{eq:sphere-at-infinity}
\end{equation}
represents the set of directions at infinity.  Equivalently, one may
work in the closed Poincar\'e ball $\overline{\mathbb B^3}$, whose
boundary is $\mathbb S^2_\infty$.  After the standard positive
reparametrization of time, a polynomial vector field extends to a smooth
vector field on this compact manifold and the equator is invariant
\cite{GarciaPerezChavelaSusin2006,BravoFernandezTeruel2020,Li2019}.

\subsection{Homogeneous decomposition and desingularized equations}

Let $F=\nabla V$ have exact degree $d\geq1$.  In the present conjecture
$d\leq2$.  Write
\begin{equation}
 F=F_0+F_1+\cdots+F_d,
 \qquad
 V=V_0+V_1+\cdots+V_{d+1},
 \label{eq:homogeneous-decomposition-infinity}
\end{equation}
where $F_j$ and $V_j$ are homogeneous of degrees $j$ and $j$,
respectively.  Since $F=\nabla V$,
\begin{equation}
 F_j=\nabla V_{j+1},
 \qquad j=0,\ldots,d.
 \label{eq:homogeneous-gradient-components}
\end{equation}

Near infinity, introduce polar variables
\begin{equation}
 \bx=r u,
 \qquad r>0,
 \qquad u\in\mathbb S^2,
 \qquad \rho=\frac1r.
 \label{eq:radial-infinity-coordinates}
\end{equation}
Let
\[
 P_u=I-uu^{T}
\]
be the orthogonal projection onto $T_u\mathbb S^2$.  The original time
$t$ is replaced by the desingularized time $\tau$ defined by
\begin{equation}
 \frac{\dd\tau}{\dd t}=r^{d-1}.
 \label{eq:poincare-time-rescaling}
\end{equation}
For an exact quadratic field, this is $\dd\tau/\dd t=r$.

\begin{proposition}\label{prop:radial-compactified-equations}
In the variables $(u,\rho)$ and the time $\tau$, the compactified system
is
\begin{align}
 u'
 &=P_uF_d(u)+\rho P_uF_{d-1}(u)+\cdots+\rho^dP_uF_0,
 \label{eq:compactified-angular}\\
 \rho'
 &=-\rho\langle u,F_d(u)\rangle
   -\rho^2\langle u,F_{d-1}(u)\rangle
   -\cdots
   -\rho^{d+1}\langle u,F_0\rangle,
 \label{eq:compactified-radial}
\end{align}
where the prime denotes differentiation with respect to $\tau$.
Consequently, $\rho=0$ is invariant and the flow on
$\mathbb S^2_\infty$ depends only on the highest homogeneous component
$F_d$.
\end{proposition}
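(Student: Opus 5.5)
The plan is a direct computation in the polar variables $\bx=ru$, $\|u\|=1$, followed by the substitution $\rho=1/r$ and the time change \eqref{eq:poincare-time-rescaling}. First I split the original equation $\dot{\bx}=F(\bx)$ into its radial and tangential parts. Differentiating $\bx=ru$ gives $\dot{\bx}=\dot r\,u+r\dot u$. Since $\|u\|=1$, we have $\langle u,\dot u\rangle=0$. Taking the inner product with $u$ and applying $P_u$ therefore yields
\[
 \dot r=\langle u,F(ru)\rangle,
 \qquad
 \dot u=\frac1r P_uF(ru).
\]

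Next I insert the homogeneous decomposition \eqref{eq:homogeneous-decomposition-infinity}, using $F_j(ru)=r^jF_j(u)$. This gives
\[
 \dot u=\sum_{j=0}^d r^{j-1}P_uF_j(u),
 \qquad
 \dot r=\sum_{j=0}^d r^{j}\langle u,F_j(u)\rangle.
\]
For $\rho=1/r$ we have $\dot\rho=-\rho^2\dot r$. Dividing by $\dd\tau/\dd t=r^{d-1}=\rho^{1-d}$ means multiplying every right-hand side by $\rho^{d-1}$.

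For the angular equation, the $j$-th term becomes $\rho^{1-j}\rho^{d-1}P_uF_j(u)=\rho^{d-j}P_uF_j(u)$. This reproduces \eqref{eq:compactified-angular}, with the $F_d$ term carrying no power of $\rho$ and the constant term $F_0$ carrying $\rho^d$.

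For the radial equation, the $j$-th term becomes $-\rho^{2}\rho^{-j}\rho^{d-1}\langle u,F_j(u)\rangle=-\rho^{d+1-j}\langle u,F_j(u)\rangle$. This gives \eqref{eq:compactified-radial}, from $-\rho\langle u,F_d(u)\rangle$ down to $-\rho^{d+1}\langle u,F_0\rangle$.

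The only step needing care is the bookkeeping of exponents, in particular checking that all powers of $\rho$ are nonnegative integers. They are, because $0\le j\le d$. Consequently the right-hand sides are polynomial in $\rho$ and smooth in $u$, and they extend analytically to $\rho=0$; this extension is the compactified field. I also have to justify the time change: for $\rho>0$ it is a positive reparametrization, so orbits and their orientation are preserved.

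Finally, every term of $\rho'$ contains at least one factor $\rho$, so $\rho'=0$ on $\{\rho=0\}$, and that set is invariant. Setting $\rho=0$ in \eqref{eq:compactified-angular} leaves $u'=P_uF_d(u)$, which involves only the highest homogeneous component $F_d$. It remains to confirm that this vector field is tangent to $\mathbb S^2$. That holds because $P_u$ projects onto $T_u\mathbb S^2$, so $\langle u,u'\rangle=0$ and $\|u\|=1$ is preserved. This is consistent with the polar derivation.
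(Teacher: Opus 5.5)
Your proposal is correct and follows essentially the same route as the paper. Both split $\dot{\bx}=\dot r\,u+r\dot u$ into radial and tangential parts, insert the homogeneous decomposition, substitute $\rho=1/r$ with $\dot\rho=-\rho^2\dot r$, multiply by $\dd t/\dd\tau=\rho^{d-1}$, and read off invariance of $\rho=0$ from the factor $\rho$ in every radial term; your explicit exponent bookkeeping ($\rho^{d-j}$ and $\rho^{d+1-j}$ for $0\le j\le d$) and the tangency check simply spell out steps the paper leaves condensed.
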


\begin{proof}
From $\bx=ru$ and $\|u\|=1$, orthogonal projection onto the radial and
tangential directions gives
\[
 \dot r=\langle u,F(ru)\rangle,
 \qquad
 \dot u=\frac1rP_uF(ru).
\]
Using $F(ru)=\sum_{j=0}^dr^jF_j(u)$ and
$\dd t/\dd\tau=r^{1-d}$ yields
\[
 u'=\sum_{j=0}^dr^{j-d}P_uF_j(u),
\]
which is \eqref{eq:compactified-angular} after substituting
$\rho=1/r$.  Moreover,
\[
 \dot\rho=-\frac{\dot r}{r^2},
\]
and the same substitution gives
\eqref{eq:compactified-radial}.  Every term in the latter equation has
a factor $\rho$, proving invariance of the equator.
\end{proof}

\subsection{The spherical gradient structure at infinity}

Let
\begin{equation}
 W(u)=V_{d+1}(u),
 \qquad u\in\mathbb S^2.
 \label{eq:spherical-potential}
\end{equation}
Thus $W=V_{d+1}|_{\mathbb S^2}$ is the restriction of the highest
homogeneous part of the Euclidean potential.

\begin{theorem}\label{thm:spherical-gradient-infinity}
The restriction of the Poincar\'e compactification of $F=\nabla V$ to
$\mathbb S^2_\infty$ is the spherical gradient system
\begin{equation}
 u'=\gradS W(u).
 \label{eq:spherical-gradient-flow}
\end{equation}
In particular,
\begin{equation}
 \frac{\dd}{\dd\tau}W(u(\tau))
 =\|\gradS W(u(\tau))\|^2\geq0,
 \label{eq:spherical-monotonicity}
\end{equation}
with equality precisely at equilibria of the boundary flow.
\end{theorem}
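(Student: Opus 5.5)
Set $\rho=0$ in Proposition~\ref{prop:radial-compactified-equations}. The equator flow is then
\[
u'=P_uF_d(u),\qquad F_d=\nabla V_{d+1}
\]
by \eqref{eq:homogeneous-gradient-components}. The proof therefore reduces to a pointwise identity on $\mathbb S^2$: for every $u\in\mathbb S^2$,
\[
\gradS W(u)=P_u\nabla V_{d+1}(u),
\]
where $W=V_{d+1}|_{\mathbb S^2}$ and $\mathbb S^2$ carries the round metric induced from $\R^3$. Note that the vertical direction $s$ of $\mathbb S^3$ plays no role: on $\{\rho=0\}$ the coordinate $u$ alone parametrizes the equator, and the metric there is the round metric of $\mathbb S^2_\infty$.

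The identity is the standard fact that the Riemannian gradient of the restriction of a function to an embedded submanifold with the induced metric is the tangential projection of the ambient gradient. I would verify it directly. Fix $u\in\mathbb S^2$ and a tangent vector $v\in T_u\mathbb S^2=u^\perp$. Take a curve $\gamma$ in $\mathbb S^2$ with $\gamma(0)=u$ and $\gamma'(0)=v$. Then
\[
\dd W_u(v)=\tfrac{\dd}{\dd s}V_{d+1}(\gamma(s))\big|_{s=0}=\langle\nabla V_{d+1}(u),v\rangle=\langle P_u\nabla V_{d+1}(u),v\rangle,
\]
where the last step uses $P_u v=v$ and the symmetry of $P_u$. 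The vector $P_u\nabla V_{d+1}(u)$ lies in $T_u\mathbb S^2$, so by the defining property of the Riemannian gradient it equals $\gradS W(u)$. This proves \eqref{eq:spherical-gradient-flow}.

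For \eqref{eq:spherical-monotonicity}, use the chain rule along a solution:
\[
\tfrac{\dd}{\dd\tau}W(u(\tau))=\langle\gradS W(u),u'\rangle=\|\gradS W(u)\|^2\ge0.
\]
Equality holds exactly when $\gradS W(u)=0$, which is exactly when $u'=0$, that is, at equilibria of the boundary flow.

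The only point needing care is consistency of the time parametrization: the boundary vector field is defined only after the rescaling \eqref{eq:poincare-time-rescaling}, which is already built into Proposition~\ref{prop:radial-compactified-equations}. Beyond that, there is no genuine obstacle; the argument is a short identification.

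A remark worth adding: Euler's identity gives $\langle u,\nabla V_{d+1}(u)\rangle=(d+1)W(u)$. Hence
\[
P_u\nabla V_{d+1}(u)=\nabla V_{d+1}(u)-(d+1)W(u)\,u,
\]
which is a convenient explicit formula for $\gradS W$. It also shows that the radial coefficient in \eqref{eq:compactified-radial} at $\rho=0$ is $-(d+1)W(u)$. That fact would be useful later for classifying equilibria at infinity as attracting or repelling in the normal direction.
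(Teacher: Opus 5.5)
Your proposal is correct and follows essentially the same route as the paper: set $\rho=0$ in the compactified equations, use $F_d=\nabla V_{d+1}$, identify $P_u\nabla V_{d+1}(u)$ with $\gradS W(u)$ as the tangential projection of the Euclidean gradient, and differentiate $W$ along the boundary flow. Your direct check of the projection identity via the defining property of the Riemannian gradient just spells out the step the paper cites as standard.
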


\begin{proof}
By Proposition~\ref{prop:radial-compactified-equations}, the boundary
field is
\[
 u'=P_uF_d(u).
\]
Using \eqref{eq:homogeneous-gradient-components},
$F_d=\nabla V_{d+1}$.  The Riemannian gradient on the unit sphere is the
tangential projection of the Euclidean gradient; hence
\[
 \gradS W(u)
 =P_u\nabla V_{d+1}(u)
 =P_uF_d(u).
\]
This proves \eqref{eq:spherical-gradient-flow}.  Taking the derivative
of $W$ along this flow gives \eqref{eq:spherical-monotonicity}.
\end{proof}

\subsection{Equilibria and critical directions at infinity}

\begin{proposition}\label{prop:equilibria-infinity}
A direction $u\in\mathbb S^2$ is an equilibrium at infinity if and only
if
\begin{equation}
 \nabla V_{d+1}(u)=(d+1)V_{d+1}(u)u.
 \label{eq:equilibrium-infinity-general}
\end{equation}
For an exact quadratic vector field,
\begin{equation}
 \nabla V_3(u)=3V_3(u)u.
 \label{eq:equilibrium-infinity-quadratic}
\end{equation}
\end{proposition}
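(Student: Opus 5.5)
By Theorem~\ref{thm:spherical-gradient-infinity}, the boundary flow on $\mathbb S^2_\infty$ is
\[
u'=P_uF_d(u)=\gradS W(u).
\]
Hence $u$ is an equilibrium at infinity exactly when $P_u\nabla V_{d+1}(u)=0$. Here we use $F_d=\nabla V_{d+1}$ from \eqref{eq:homogeneous-gradient-components}. The plan is to make this vanishing explicit and then evaluate the normal component by Euler's identity.

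First, since $P_u=I-uu^T$ with $\|u\|=1$, the condition $P_u\nabla V_{d+1}(u)=0$ means that $\nabla V_{d+1}(u)$ lies in the kernel of the orthogonal projection onto $T_u\mathbb S^2$, that is, on the line spanned by $u$. Concretely,
\[
\nabla V_{d+1}(u)=\langle u,\nabla V_{d+1}(u)\rangle\,u .
\]
Second, since $V_{d+1}$ is homogeneous of degree $d+1$, Euler's identity gives
\[
\langle \bx,\nabla V_{d+1}(\bx)\rangle=(d+1)V_{d+1}(\bx).
\]
Evaluating at $\bx=u$ turns the coefficient into $(d+1)V_{d+1}(u)$, which yields \eqref{eq:equilibrium-infinity-general}.

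Conversely, suppose \eqref{eq:equilibrium-infinity-general} holds. Then $\nabla V_{d+1}(u)$ is a multiple of $u$, so applying $P_u$ (which kills $u$) gives zero, and $u$ is an equilibrium. As a consistency check, taking the inner product of \eqref{eq:equilibrium-infinity-general} with $u$ reproduces Euler's identity. This means the scalar in front of $u$ is forced, not an extra constraint.

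The quadratic case \eqref{eq:equilibrium-infinity-quadratic} follows by setting $d=2$. Exact degree two guarantees, via Corollary~\ref{cor:degree}, that the top part is $V_3\not\equiv0$, so the formula is not vacuous.

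There is no real obstacle; the only point needing care is confirming that the boundary field is exactly $P_uF_d(u)$ with no lower-order contamination. That holds because $\rho=0$ in \eqref{eq:compactified-angular}.
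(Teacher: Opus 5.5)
Your proof is correct and is the paper's argument: the condition $P_u\nabla V_{d+1}(u)=0$ forces $\nabla V_{d+1}(u)$ to be parallel to $u$, and Euler's identity identifies the coefficient as $(d+1)V_{d+1}(u)$. Your explicit converse and the remark that $V_3\not\equiv0$ for an exact quadratic field are small additions the paper leaves implicit.
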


\begin{proof}
The equilibrium condition is
$P_u\nabla V_{d+1}(u)=0$, so the Euclidean gradient is parallel to $u$:
\[
 \nabla V_{d+1}(u)=\lambda u.
\]
Euler's identity for a homogeneous polynomial of degree $d+1$ gives
\[
 \lambda
 =\langle u,\nabla V_{d+1}(u)\rangle
 =(d+1)V_{d+1}(u),
\]
which proves the result.
\end{proof}

\subsection{Dynamical consequences on the sphere at infinity}

The boundary potential $W$ is real analytic and the sphere is compact.
The \L{}ojasiewicz argument of Section~\ref{sec:dynamical-gradient}
therefore applies in local analytic charts on $\mathbb S^2$.

\begin{theorem}\label{thm:convergence-sphere-infinity}
Every trajectory of the boundary flow
\eqref{eq:spherical-gradient-flow} has finite forward length and
converges to a single equilibrium at infinity.  In particular, the
boundary flow has no nonconstant periodic or homoclinic orbit and no
heteroclinic cycle.
\end{theorem}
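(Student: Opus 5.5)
The plan is to transplant the argument of Theorem~\ref{thm:analytic-gradient-convergence} from $\R^3$ to the compact analytic Riemannian manifold $\mathbb S^2$. The proof works intrinsically with the spherical gradient flow \eqref{eq:spherical-gradient-flow}, and charts are used only to import the \L{}ojasiewicz inequality.

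\textbf{Step 1: global solutions and limit sets.} The sphere is compact and the boundary field $P_u\nabla V_{d+1}(u)$ is polynomial, hence smooth. Every solution is therefore defined for all $\tau\in\R$, and every forward orbit is bounded in the sense that its closure is compact. By \eqref{eq:spherical-monotonicity}, $W(u(\tau))$ is nondecreasing and bounded by $\max_{\mathbb S^2}W$, so it has a finite limit $c$. Moreover
\[
\int_0^{+\infty}\|u'(\tau)\|^2\,\dd\tau=c-W(u(0))<+\infty .
\]
The LaSalle-type argument of Theorem~\ref{thm:omega-critical} carries over verbatim with $\nabla$ replaced by $\gradS$. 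It shows that $\omega(u_0)$ is nonempty, compact, connected and invariant, and that it is contained in $\{\gradS W=0\}\cap W^{-1}(c)$.

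\textbf{Step 2: uniform \L{}ojasiewicz inequality on the sphere.} This is the only genuinely new point. Let $p$ be a critical point of $W$ on $\mathbb S^2$, and take an analytic chart, for instance the radial projection onto the tangent plane at $p$. In this chart $\tilde W=W\circ\psi^{-1}$ is real analytic and has a critical point at $\psi(p)$, so Theorem~\ref{thm:lojasiewicz-inequality} gives
\[
|\tilde W-c|^\theta\le C\|\nabla\tilde W\|
\]
near $\psi(p)$. On a compact chart neighbourhood the Euclidean norm of $\nabla\tilde W$ and the Riemannian norm of $\gradS W$ are comparable, because the metric tensor is bounded above and below. Hence
\[
|W(u)-c|^{\theta}\le C'\|\gradS W(u)\|
\]
holds near $p$. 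Covering the compact set $\omega(u_0)$ by finitely many such neighbourhoods, and taking the largest exponent after restricting to $|W-c|\le1$, produces a neighbourhood $U$ of $\omega(u_0)$ on which a uniform inequality holds. Equivalently, one could avoid charts: $V_{d+1}$ restricted to a neighbourhood of $\mathbb S^2$ in $\R^3$ is analytic, and one can compare gradients there. I would use the chart version as the cleaner route.

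\textbf{Step 3: finite length and convergence.} Set $e(\tau)=c-W(u(\tau))$. Exactly as in \eqref{eq:tail-length-estimate}, once the orbit is in $U$ we have
\[
-\frac{\dd}{\dd\tau}e^{1-\theta}\ge\frac{1-\theta}{C'}\|u'\| ,
\]
so the tail length is at most $\frac{C'}{1-\theta}e(\tau)^{1-\theta}$. Spherical arc length dominates chordal distance in $\R^3$, so $u(\tau)$ is Cauchy and converges to a point of $\omega(u_0)$, which is an equilibrium at infinity. The finite initial segment adds finite length.

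\textbf{Step 4: the exclusions.} These follow as in Proposition~\ref{prop:immediate-exclusions}. A periodic orbit of period $T$ would satisfy $0=\int_0^T\|\gradS W\|^2\,\dd\tau$, so it must be constant. A homoclinic orbit has $W$ nondecreasing with equal limits at $\pm\infty$, hence constant, hence stationary. For a heteroclinic cycle, $W$ strictly increases along each nonconstant connection, which is incompatible with returning to the starting equilibrium.

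The main obstacle is Step~2, namely justifying that the Euclidean \L{}ojasiewicz inequality transfers to the Riemannian gradient. Once the uniform inequality is in place, the remaining steps are routine copies of Section~\ref{sec:dynamical-gradient}.
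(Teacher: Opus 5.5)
Your proposal is correct and follows the same route as the paper. Compactness of $\mathbb S^2$ gives boundedness, the \L{}ojasiewicz inequality for $W$ in analytic charts gives the finite-length estimate of Theorem~\ref{thm:analytic-gradient-convergence}, and strict monotonicity of $W$ gives the exclusions. Your Step~2 spells out the comparison between the Euclidean chart gradient and the Riemannian gradient $\gradS W$, which the paper leaves implicit.
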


\begin{proof}
Every trajectory is bounded because $\mathbb S^2$ is compact.  The
identity \eqref{eq:spherical-monotonicity} is the Riemannian analogue of
\eqref{eq:monotonicity}.  The local \L{}ojasiewicz gradient inequality
for the analytic function $W$ in analytic coordinate charts gives the
finite-length estimate exactly as in
Theorem~\ref{thm:analytic-gradient-convergence}.  Thus every trajectory
converges to one critical point.  The exclusions follow from strict
monotonicity exactly as in
Proposition~\ref{prop:immediate-exclusions}.
\end{proof}

\begin{theorem}\label{thm:no-chaos-sphere-infinity}
On every compact invariant subset of $\mathbb S^2_\infty$, the boundary
flow has:
\begin{enumerate}[label=\textup{(\roman*)}]
\item no nonstationary recurrent, nonwandering, or chain recurrent
 dynamics;
\item no topological transitivity, weak mixing, or topological mixing on
 a set containing more than one point;
\item no Devaney, Auslander--Yorke, Li--Yorke, mean Li--Yorke, or
 distributional chaos;
\item zero metric and topological entropy for every nonzero time map;
\item no Smale horseshoe, positive-entropy symbolic factor, or
 nontrivial transitive strange attractor;
\item only equilibrium-supported ergodic invariant probability measures.
\end{enumerate}
\end{theorem}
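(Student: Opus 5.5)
The plan is to transfer, essentially verbatim, the arguments of Sections~\ref{sec:dynamical-gradient} and~\ref{sec:exclusion-chaos} from $\R^3$ to the compact analytic Riemannian manifold $\mathbb S^2_\infty$. These arguments used only two ingredients. The first is the monotonicity identity for the potential along trajectories. The second is the convergence of every bounded trajectory to a single equilibrium. Both are now available on the sphere. Identity \eqref{eq:spherical-monotonicity} from Theorem~\ref{thm:spherical-gradient-infinity} shows that $W=V_{d+1}|_{\mathbb S^2}$ is a strict Lyapunov function. Theorem~\ref{thm:convergence-sphere-infinity} gives convergence of every trajectory to a single equilibrium at infinity, since every trajectory is automatically bounded. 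Throughout, $K\subset\mathbb S^2_\infty$ is a compact invariant set and $f_\tau$ is the time-$\tau$ map of the boundary flow restricted to $K$; it is a homeomorphism of $K$ because $\mathbb S^2$ is compact and the flow is complete.

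First I will handle item (i). Rerunning the proof of Proposition~\ref{prop:immediate-exclusions}(ii) with $W$ in place of $V$ shows that recurrent points are equilibria. The proof of Proposition~\ref{prop:nonwandering-critical} is purely topological plus monotonicity, so it gives $\operatorname{NW}=K\cap\operatorname{Crit}(W)$. For chain recurrence I invoke Conley's theorem as in Proposition~\ref{prop:chain-recurrent-critical}, since $W|_K$ is a strict Lyapunov function. Item (ii) follows by copying Theorem~\ref{thm:no-transitivity}, using level sets of $W$. That proof has two cases: in one, $W|_K$ is nonconstant and gives disjoint forward-invariant superlevel and sublevel regions; in the other, $W|_K$ is constant, which forces $K\subset\operatorname{Crit}(W)$ and makes $f_\tau$ the identity. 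Mixing properties imply transitivity, as in Corollary~\ref{cor:no-mixing}.

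For items (iii)--(vi), I use that every orbit in $K$ converges to an equilibrium by Theorem~\ref{thm:convergence-sphere-infinity}. Taking $d$ to be the chordal or geodesic metric on $\mathbb S^2$, the distance between two orbits then converges to the distance between their limit equilibria. This excludes Li--Yorke and mean Li--Yorke pairs, and the distribution-function computation of Theorem~\ref{thm:no-distributional-chaos} carries over unchanged. Devaney and Auslander--Yorke chaos fail because transitivity fails; in addition, the periodic points of $f_\tau$ are exactly the equilibria, since the boundary flow has no nonconstant periodic orbits. For measures, Poincaré recurrence puts every invariant measure on $K\cap\operatorname{Crit}(W)$, where $f_\tau$ is the identity. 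Hence every ergodic measure is a Dirac mass and every metric entropy is zero. The variational principle, valid on the compact metric space $K$, then gives zero topological entropy, which settles (iv) and (vi). Horseshoes and symbolic factors have positive entropy and are therefore excluded as in Corollary~\ref{cor:no-horseshoe}. A transitive strange attractor is excluded by (ii), and a positive-entropy one by (iv), which completes (v).

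The main obstacle is that the convergence theorem on the sphere rests on the \L{}ojasiewicz argument in charts. The issue is that the Riemannian gradient norm and the Euclidean chart norm differ, and the finite-length estimate must be stated in the spherical metric. This is already asserted in Theorem~\ref{thm:convergence-sphere-infinity}, so I take it as given. The only point I would double-check is that the metric-dependent notions (Li--Yorke, distributional chaos) are insensitive to the choice among equivalent metrics on $\mathbb S^2$. This holds here because orbitwise convergence to points is a topological property.
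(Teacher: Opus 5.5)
Your proposal is correct and takes the same approach as the paper. The paper's proof likewise notes that the boundary system is an analytic gradient flow of $W=V_{d+1}|_{\mathbb S^2}$ on a compact manifold and transfers the arguments of Sections~\ref{sec:dynamical-gradient} and~\ref{sec:exclusion-chaos} verbatim with $V$ replaced by $W$; your item-by-item version is a more explicit rendering of that transfer.

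The one step that both you and the paper leave implicit is the chain-recurrence claim. A strict Lyapunov function alone does not force the chain recurrent set into the equilibria, because critical values filling an interval would let $\varepsilon$-chains slide along a continuum of equilibria. What is needed is that $W(\operatorname{Crit}_{\mathbb S^2}(W))$ be nowhere dense. This holds here: the \L{}ojasiewicz inequality makes $W$ locally constant on its critical set, so $W$ has only finitely many critical values on the compact sphere.
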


\begin{proof}
The boundary system is an analytic gradient flow on a compact Riemannian
manifold.  The proofs of Sections~\ref{sec:dynamical-gradient} and
\ref{sec:exclusion-chaos} use only strict gradient monotonicity,
compactness, analytic convergence, recurrence, and the variational
principle.  They therefore apply verbatim with $V$ replaced by $W$ and
$\R^3$ replaced by $\mathbb S^2$.
\end{proof}

\subsection{Escaping trajectories and their compactified limit sets}

Let $\widehat\phi_\tau$ denote the desingularized compactified flow.  If
an original trajectory satisfies
$\|\bx(t)\|\to+\infty$, its compactified image approaches the equator.
For exact quadratic fields, the desingularized time tends to infinity
even when the original solution blows up in finite time.

\begin{lemma}\label{lem:desingularized-time-escape}
Let $F$ have degree at most two and let
$\|\bx(t)\|\to+\infty$ as $t\uparrow T_{\max}$.  For an exact quadratic
field, choose $t_0$ so large that $\|\bx(t)\|>1$ for $t\geq t_0$ and define
\[
 \tau(t)=\tau(t_0)+\int_{t_0}^t\|\bx(s)\|\dd s.
\]  Then
$\tau(t)\to+\infty$ as $t\uparrow T_{\max}$.
\end{lemma}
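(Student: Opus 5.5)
We split according to whether $T_{\max}$ is finite or infinite; in both cases the idea is that the integrand $\|\bx(s)\|$ eventually exceeds $1$.

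\emph{Infinite escape time.} If $T_{\max}=+\infty$, then for $t\ge t_0$ we have $\|\bx(s)\|>1$, so
\[
\tau(t)-\tau(t_0)\ge t-t_0\longrightarrow+\infty .
\]

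\emph{Finite escape time.} The substantive case is $T_{\max}<+\infty$, where the length of the time interval alone is not enough. The plan is to bound the growth of $r(t)=\|\bx(t)\|$ from above by a quadratic differential inequality and then integrate it.

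First, since $F$ has degree at most two, there is a constant $M>0$ with $\|F(\bx)\|\le M(1+\|\bx\|^2)\le 2M\|\bx\|^2$ whenever $\|\bx\|\ge1$. Because $r$ is Lipschitz wherever $\bx\neq0$ and $|\dot r|\le\|\dot\bx\|$, this gives
\[
\dot r\le 2Mr^2 \qquad\text{for } t\ge t_0 .
\]
Writing $\rho=1/r$, this becomes $\dot\rho\ge-2M$.

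Integrating from $t$ to $T_{\max}$, and using $\rho(s)\to0$ as $s\uparrow T_{\max}$, we get
\[
\rho(t)\le 2M(T_{\max}-t), \qquad\text{that is,}\qquad \|\bx(t)\|\ge\frac{1}{2M(T_{\max}-t)} .
\]
Consequently
\[
\tau(t)-\tau(t_0)=\int_{t_0}^t\|\bx(s)\|\,\dd s\ge\frac{1}{2M}\log\frac{T_{\max}-t_0}{T_{\max}-t}\longrightarrow+\infty .
\]

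The key step, and the main obstacle, is the finite-time lower bound on blow-up speed. The step that needs care there is justifying the integration of $\dot\rho\ge-2M$ up to $T_{\max}$. This should be done on $[t,s]$ followed by letting $s\uparrow T_{\max}$, and it uses the hypothesis that $\|\bx\|\to\infty$, not merely an unbounded $\limsup$. The fact that the exponent in this lower bound is $1$ is exactly what makes the integral diverge logarithmically; this is where degree at most two is essential, matching the rescaling $\dd\tau/\dd t=r^{d-1}$ with $d=2$.
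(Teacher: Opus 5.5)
Your proposal is correct and follows essentially the same route as the paper: the case $T_{\max}=+\infty$ is immediate, and for finite blow-up you use $\|F(\bx)\|\le C(1+\|\bx\|^2)$ to get $\dot r\le 2Cr^2$, hence $(1/r)'\ge -2C$ and $r(t)\ge 1/\bigl(2C(T_{\max}-t)\bigr)$, so $\int r\,\dd s$ diverges logarithmically. Your version is slightly more explicit than the paper's on two points: you integrate on $[t,s]$ and then let $s\uparrow T_{\max}$, and you use $\|\bx\|\ge 1$ to absorb the constant term.
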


\begin{proof}
If $T_{\max}=+\infty$, the conclusion is immediate because
$\|\bx(t)\|\geq1$ for all sufficiently large $t$.  Suppose that
$T_{\max}<+\infty$.  Since $F$ is quadratic, there is $C>0$ such that
\[
 \|F(\bx)\|\leq C(1+\|\bx\|^2).
\]
For $r(t)=\|\bx(t)\|$ sufficiently large,
$\dot r(t)\leq2Cr(t)^2$ wherever $r$ is differentiable.  Hence
$y(t)=1/r(t)$ satisfies $y'(t)\geq-2C$.  Since $y(t)\to0$ as
$t\uparrow T_{\max}$,
\[
 y(t)\leq2C(T_{\max}-t),
 \qquad
 r(t)\geq\frac{1}{2C(T_{\max}-t)}.
\]
Therefore $\int^t r(s)\dd s$ diverges logarithmically as
$t\uparrow T_{\max}$.
\end{proof}

\begin{theorem}\label{thm:escape-limit-critical-infinity}
Let $F=\nabla V$ be a polynomial gradient field of exact degree
$d\in\{1,2\}$, and let $\bx(t)$ be a maximal forward trajectory that
escapes to infinity.  Let
$\widehat\omega(\bx_0)$ be the omega-limit set of its compactified orbit
in desingularized time.  Then
\begin{equation}
 \widehat\omega(\bx_0)
 \subset
 \operatorname{Crit}_{\mathbb S^2}(W)
 \subset\mathbb S^2_\infty,
 \qquad
 W=V_{d+1}|_{\mathbb S^2}.
 \label{eq:escape-omega-critical-infinity}
\end{equation}
The set $\widehat\omega(\bx_0)$ is nonempty, compact, connected, and
invariant.  If the critical points of $W$ are isolated, then there is a
unique equilibrium $u_\infty\in\mathbb S^2$ such that
\begin{equation}
 \frac{\bx(t)}{\|\bx(t)\|}\longrightarrow u_\infty
 \qquad\text{as }t\uparrow T_{\max}.
 \label{eq:unique-escape-direction}
\end{equation}
\end{theorem}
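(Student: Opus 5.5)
Work throughout in the compactified coordinates $(u,\rho)$ of Proposition~\ref{prop:radial-compactified-equations}, on the closed region $\{0\le\rho\le\rho_0\}\cong\mathbb S^2\times[0,\rho_0]$ near the equator. This region is compact and the desingularized field is smooth on it.

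\textbf{Step 1: the limit set lies on the equator.} Since $\|\bx(t)\|\to+\infty$, we have $\rho(t)=1/\|\bx(t)\|\to0$. By Lemma~\ref{lem:desingularized-time-escape} (and directly when $d=1$, where $\dd\tau/\dd t=1$), the desingularized time satisfies $\tau\to+\infty$ as $t\uparrow T_{\max}$. So the compactified orbit is defined for all large $\tau$ and eventually stays in the compact region $\{\rho\le\rho_0\}$. Taking $K_T$ to be the closure of the orbit after time $T$, exactly as in the proof of Theorem~\ref{thm:omega-critical}, shows that $\widehat\omega(\bx_0)$ is nonempty, compact and connected. It is invariant under the compactified flow by continuity and uniqueness. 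Because $\rho\to0$, it is contained in $\{\rho=0\}=\mathbb S^2_\infty$.

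\textbf{Step 2: the limit set consists of critical points of $W$.} Here we use a Butler--McGehee/LaSalle type argument with the restricted potential. The set $\widehat\omega$ is a compact invariant subset of the boundary flow, which by Theorem~\ref{thm:spherical-gradient-infinity} is the gradient flow of $W$. I will show that $W$ is constant on $\widehat\omega$; then the argument of Theorem~\ref{thm:omega-critical} gives $\gradS W=0$ there.

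To prove constancy, use the function $\widetilde W(u,\rho)=W(u)$, extended to the compact region. Along the full compactified flow,
\[
\frac{\dd}{\dd\tau}W(u)=\langle\gradS W(u),u'\rangle=\|\gradS W(u)\|^2+\rho\,E(u,\rho),
\]
where $E$ is bounded on the region. Then $W(u(\tau))$ is nondecreasing up to an error term that is integrable only if $\int\rho\,\dd\tau<\infty$, and that need not hold.

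A cleaner route is to use the scaled Euclidean potential. Since $V(\bx(t))$ is nondecreasing, it has a limit in $(-\infty,+\infty]$. Write
\[
\rho^{d+1}V(\bx)=W(u)+\rho V_d(u)+\dots .
\]
For $d=2$, compute
\[
\dot V=\|\nabla V\|^2=r^{4}\|F_2(u)\|^2+O(r^3),
\]
so in $\tau$-time $V'=r^{3}\|F_2(u)\|^2+O(r^2)$. Also $\rho'=-\rho\,(3W(u))+O(\rho^2)$ by Euler. A direct computation of $\frac{\dd}{\dd\tau}\bigl(\rho^3V\bigr)$ should give
\[
\frac{\dd}{\dd\tau}\bigl(\rho^3V\bigr)=\|F_2(u)\|^2-9W(u)^2+O(\rho)=\|\gradS W(u)\|^2+O(\rho),
\]
using $\|\nabla V_3(u)\|^2=\|P_u\nabla V_3\|^2+9W^2$. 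Combined with the bounded, slowly varying quantity $\rho^3V\to W$ along the orbit, this yields the following: if some $p\in\widehat\omega$ had $\gradS W(p)\ne0$, then near each of the infinitely many returns to a neighborhood of $p$, over time windows of fixed length, $W$ would increase by at least a fixed $\delta$. But on any other visit to the equator the $W$-value is bounded, and one must rule out compensating decreases. The $O(\rho)$ error with $\rho\to0$ shows that over each fixed time window, decreases of $W$ are $o(1)$. I would therefore rather apply the standard fact that the $\omega$-limit set of an orbit asymptotic to an invariant compact set $\mathbb S^2_\infty$ is internally chain transitive for the boundary flow (Conley/Mischaikow--Smith--Thieme). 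Proposition~\ref{prop:chain-recurrent-critical}, transplanted to $\mathbb S^2$ via Theorem~\ref{thm:no-chaos-sphere-infinity}(i), says the chain recurrent set of the boundary gradient flow lies in $\operatorname{Crit}_{\mathbb S^2}(W)$. An internally chain transitive set is contained in the chain recurrent set, which gives \eqref{eq:escape-omega-critical-infinity}. This is the main obstacle: justifying internal chain transitivity, which follows since the orbit converges to the invariant boundary and pieces of the orbit shadow boundary trajectories on finite windows by continuous dependence.

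\textbf{Step 3: a unique escape direction when critical points are isolated.} If the critical points of $W$ are isolated, then $\widehat\omega$ is a connected subset of a discrete set, hence a singleton $\{(u_\infty,0)\}$, as in Corollary~\ref{cor:isolated-convergence}. Compactness then gives $u(\tau)\to u_\infty$. Since $u=\bx/\|\bx\|$ and $\tau\to\infty$ corresponds to $t\uparrow T_{\max}$, this is \eqref{eq:unique-escape-direction}.
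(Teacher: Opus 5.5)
Your proposal is correct and follows the paper's proof essentially step for step: $\rho\to0$ places $\widehat\omega(\bx_0)$ in the invariant equator; internal chain transitivity of omega-limit sets, together with the inclusion of the boundary flow's chain recurrent set in $\operatorname{Crit}_{\mathbb S^2}(W)$, gives \eqref{eq:escape-omega-critical-infinity}; and connectedness settles the isolated case. The exploratory $\rho^3V$ computation in Step~2 can be dropped, and one point in the chain-recurrence step deserves to be made explicit (the paper leaves it implicit too): a strict Lyapunov function excludes nonstationary chain recurrence because $W$, being analytic on $\mathbb S^2$, has only finitely many critical values.
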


\begin{proof}
The compactified phase space is compact and the desingularized flow is
complete.  Since $\|\bx(t)\|\to\infty$, every accumulation point of the
compactified orbit belongs to the invariant equator.  Standard
properties of omega-limit sets give nonemptiness, compactness,
connectedness, and invariance.

An omega-limit set of a precompact orbit is internally chain transitive
\cite{Conley1978}.
Because the set lies in the invariant equator, this chain transitivity is
with respect to the boundary flow.  By Conley theory, the chain recurrent
set of a compact gradient flow is contained in the critical set of its
strict Lyapunov function.  The strict Lyapunov function here is $W$ by
\eqref{eq:spherical-monotonicity}.  This proves
\eqref{eq:escape-omega-critical-infinity}.

If the critical set is discrete, the connected set
$\widehat\omega(\bx_0)$ is a singleton $\{u_\infty\}$.  The compactified
orbit therefore converges to $u_\infty$, which is equivalent to
\eqref{eq:unique-escape-direction}.
\end{proof}

Without isolation, Theorem~\ref{thm:escape-limit-critical-infinity}
places the compactified omega-limit set inside a connected subset of
spherical equilibria, but does not assert convergence to one direction.
This distinction is necessary because the lower-degree terms of the
original field perturb the angular equation away from the equator.  The
flow \emph{on} the equator always converges by
Theorem~\ref{thm:convergence-sphere-infinity}; an interior escaping orbit
is only asymptotic to that boundary system.

\subsection{Invariant measures and entropy of the full compactification}

Although the complete compactified vector field need not itself be the
gradient of one globally defined potential, its recurrent measure-
theoretic core is still stationary.

\begin{theorem}
\label{thm:compactified-invariant-measures}
Let $\widehat\phi$ be the Poincar\'e compactification of a nonconstant
polynomial gradient field on $\R^3$, and put
\begin{equation}
 \widehat{\mathcal E}
 =h\bigl(\operatorname{Crit}(V)\bigr)
 \cup
 \operatorname{Crit}_{\mathbb S^2}(W).
 \label{eq:compactified-equilibrium-set}
\end{equation}
Every invariant Borel probability measure $\mu$ of a nonzero
compactified time map is concentrated on $\widehat{\mathcal E}$:
\[
 \mu(\widehat{\mathcal E})=1.
\]
Every ergodic invariant probability measure is a Dirac mass at a finite
or infinite equilibrium.  Consequently, every compactified time map has
zero metric and topological entropy and possesses no Smale horseshoe or
positive-entropy symbolic subsystem.
\end{theorem}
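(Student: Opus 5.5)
The plan is to reduce to Poincaré recurrence, exactly as in Theorem~\ref{thm:invariant-measures-equilibria}. We show that every recurrent point of the compactified time map $g=\widehat\phi_\tau$, $\tau\neq0$, lies in $\widehat{\mathcal E}$. The compactified phase space $\overline{\mathbb S^3_+}$ (or the closed Poincaré ball) is a compact metric space. The desingularized flow is complete, and $g$ is a homeomorphism. By Poincaré recurrence, $\mu$-almost every point $\xi$ is recurrent under $g$, that is, $g^{n_k}(\xi)\to\xi$ along some $n_k\to\infty$. Replacing $\tau$ by $-\tau$ if needed, we may take $\tau>0$. The invariant equator separates the space into two invariant pieces: the open hemisphere $h(\R^3)$ and $\mathbb S^2_\infty$. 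We treat them separately.

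\emph{Finite part.} Let $\xi=h(\bx_0)$ with $\bx_0\in\R^3$. The compactified orbit in $h(\R^3)$ is a positive time reparametrization of the original orbit, since $\dd\tau/\dd t=r^{d-1}$ is positive off the origin; near the origin one uses the standard factor $(1+\|\bx\|^2)^{(d-1)/2}$, which is also positive. Hence the orbit sets and their time orderings agree. Recurrence of $\xi$ under $g$ gives a sequence of original times $t_k\to T_{\max}$ with $\phi_{t_k}(\bx_0)\to\bx_0$; here $t_k\to T_{\max}$ because $\tau\to\infty$ along the sequence and the reparametrization is monotone. Theorem~\ref{thm:finite-accumulation-convergence} then yields $T_{\max}=+\infty$, and we may apply Proposition~\ref{prop:immediate-exclusions}(ii) directly. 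Alternatively, the monotonicity of $V$ along the reparametrized orbit, $V(\bx_0)\le V(\phi_s\bx_0)\le V(\phi_{t_k}\bx_0)\to V(\bx_0)$, forces $V$ to be constant along the orbit and hence $\bx_0\in\operatorname{Crit}(V)$. So $\xi\in h(\operatorname{Crit}(V))$.

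\emph{Infinite part.} Let $\xi=u\in\mathbb S^2_\infty$. On the equator the flow is the spherical gradient flow of Theorem~\ref{thm:spherical-gradient-infinity}, and $W$ is nondecreasing along it by \eqref{eq:spherical-monotonicity}. The same sandwich argument shows that $W$ is constant on the forward orbit of $u$, so $\gradS W(u)=0$ and $u\in\operatorname{Crit}_{\mathbb S^2}(W)$. One could also invoke Theorem~\ref{thm:no-chaos-sphere-infinity}(i).

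Combining the two cases, $\mu(\widehat{\mathcal E})=1$. Since $\widehat{\mathcal E}$ is closed (the closure of $h(\operatorname{Crit}(V))$ adds only equatorial points, which are handled by the same recurrence argument), $\operatorname{supp}\mu\subset\widehat{\mathcal E}$. On $\widehat{\mathcal E}$ every point is a fixed point of the compactified flow, so $g$ is the identity there. The ergodicity argument of Theorem~\ref{thm:invariant-measures-equilibria} then shows that ergodic measures are Dirac masses and that $h_\mu(g)=0$. The variational principle gives $h_{\mathrm{top}}(g)=0$, and the argument of Corollary~\ref{cor:no-horseshoe} excludes horseshoes and positive-entropy symbolic subsystems.

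The main obstacle is the finite part. The delicate step is justifying that recurrence of $h(\bx_0)$ in desingularized time translates into recurrence in original time, including the handling of finite escape time and the positivity of the time change. Lemma~\ref{lem:desingularized-time-escape} shows that finite-time blowup corresponds to infinite $\tau$, so no orbit reaches the equator in finite $\tau$. Beyond that, only monotone positive reparametrization is needed, and the potential argument is insensitive to time rescaling.
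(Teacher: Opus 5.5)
Your proposal is correct and follows essentially the same route as the paper: Poincar\'e recurrence, then the monotonicity sandwich for $V$ along the positively reparametrized interior orbits and for $W$ along the spherical gradient flow on the invariant equator, then identity dynamics on $\widehat{\mathcal E}$, Dirac ergodic measures, and the variational principle. Two of your additions go beyond the paper and make precise what it only asserts: the everywhere-positive time-change factor, and the translation of recurrence in desingularized time into recurrence in original time. Closedness of $\widehat{\mathcal E}$ is not needed, since $\mu(\widehat{\mathcal E})=1$ together with $g=\mathrm{id}$ on $\widehat{\mathcal E}$ already suffices; if you want it anyway, note that $\widehat{\mathcal E}$ is simply the zero set of the compactified field.
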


\begin{proof}
Let $\mu$ be invariant under a nonzero time map.  Poincar\'e recurrence
implies that $\mu$-almost every point is recurrent.  In the interior of
the compactification, the desingularized vector field is a positive
scalar multiple of the original field.  Hence, along every
nonstationary interior orbit,
\[
 \frac{\dd}{\dd\tau}V
 =a(\bx)\|\nabla V(\bx)\|^2>0
\]
for a positive function $a$.  The recurrence argument of
Proposition~\ref{prop:immediate-exclusions} therefore shows that every
recurrent interior point is a finite equilibrium.  On the invariant
equator, Theorem~\ref{thm:spherical-gradient-infinity} shows that every
recurrent point is a critical point of $W$.  Thus
$\mu(\widehat{\mathcal E})=1$.

The compactified time map is the identity on
$\widehat{\mathcal E}$.  Hence every ergodic invariant measure
concentrated there is a Dirac mass and every invariant measure has zero
metric entropy.  The variational principle gives zero topological
entropy.  A horseshoe or a positive-entropy symbolic subsystem would
contradict this conclusion.
\end{proof}

\subsection{Global compactified conclusion}

\begin{theorem}[Finite and infinite structure of the
Sprott--Zeraoulia class]
\label{thm:global-compactified-sprott-zeraoulia}
Let $F\colon\R^3\to\R^3$ be a polynomial vector field of degree at most
two with symmetric Jacobian.  Constant fields satisfy the conclusions
trivially.  For every nonconstant field:
\begin{enumerate}[label=\textup{(\roman*)}]
\item every maximal forward trajectory either converges to a finite
equilibrium or escapes to infinity;
\item the compactified flow on $\mathbb S^2_\infty$ is the analytic
gradient flow generated by the highest homogeneous part of the
potential;
\item every boundary trajectory converges to one equilibrium at
infinity and the sphere at infinity supports none of the standard
notions of chaos listed in Theorem~\ref{thm:no-chaos-sphere-infinity};
\item every escaping trajectory has its compactified omega-limit set in
the equilibrium set at infinity, and has a unique asymptotic direction
when those equilibria are isolated;
\item every invariant probability measure of the full compactified flow
is concentrated on finite or infinite equilibria, and every compactified
time map has zero topological entropy.
\end{enumerate}
Thus neither bounded dynamics in $\R^3$ nor dynamics intrinsic to the
sphere at infinity can sustain recurrent chaotic behavior, and the full
compactification cannot sustain positive-entropy dynamics.
\end{theorem}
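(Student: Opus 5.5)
The theorem is a synthesis statement, so the plan is to assemble it item by item from results already established, after first disposing of the degenerate cases.

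If $F$ is constant and nonzero, every trajectory is $\bx_0+tF$, which escapes to infinity. The equator then plays no role beyond the trivial one, so the conclusions hold vacuously or directly, as the statement indicates. For a nonconstant field, Proposition~\ref{prop:radial-potential} and Corollary~\ref{cor:degree} give $F=\nabla V$ with $V$ polynomial of degree $d+1$, where $d\in\{1,2\}$ is the exact degree of $F$. This places us in the setting of Section~\ref{sec:infinity}, with $W=V_{d+1}|_{\mathbb S^2}$.

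The items then follow in order.

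Item (i) is exactly Corollary~\ref{cor:finite-or-infinity-dichotomy}. It applies because $V$ is a polynomial, hence real analytic, and Theorem~\ref{thm:finite-accumulation-convergence} is available.

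Item (ii) is Theorem~\ref{thm:spherical-gradient-infinity}, using the compactified equations of Proposition~\ref{prop:radial-compactified-equations} and the identity $F_d=\nabla V_{d+1}$. $W$ is analytic because it is the restriction of a polynomial to the analytic sphere.

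Item (iii) combines Theorem~\ref{thm:convergence-sphere-infinity}, which gives convergence and finite length on the equator, with Theorem~\ref{thm:no-chaos-sphere-infinity}, which gives the exclusion list.

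Item (iv) is Theorem~\ref{thm:escape-limit-critical-infinity}. We must check its hypothesis $d\in\{1,2\}$, which holds for nonconstant fields of degree at most two. Lemma~\ref{lem:desingularized-time-escape} is invoked so that the desingularized time tends to $+\infty$ even under finite-time blow-up; this is what makes the compactified omega-limit set meaningful. In the linear case $d=1$ the time change is trivial and $T_{\max}=+\infty$ automatically.

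Item (v) is Theorem~\ref{thm:compactified-invariant-measures} together with the variational principle.

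The closing sentence is a restatement. Bounded dynamics in $\R^3$ is handled by Theorem~\ref{thm:exclusion-standard-chaos}. Dynamics on the sphere is handled by (iii). Positive entropy on the full compactification is excluded by (v).

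The only genuinely delicate point is (iv) and (v) in the presence of finite-time blow-up and nonisolated critical directions. There one must make sure the compactified flow is complete and smooth up to the equator, which holds since each term of the $\rho$-equation carries a factor $\rho$. One must also make sure the Conley chain-recurrence argument is applied to the boundary flow rather than the full flow. I would handle this by noting that the omega-limit set lies in the invariant equator and is internally chain transitive there, exactly as in the proof of Theorem~\ref{thm:escape-limit-critical-infinity}. Beyond that, the proof is bookkeeping: citing each earlier result under its precise hypotheses.
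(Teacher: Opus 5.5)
Your proposal is correct and follows the paper's own proof. The paper likewise dismisses constant fields directly, then obtains (i)--(v) by citing Corollary~\ref{cor:finite-or-infinity-dichotomy} and Theorems~\ref{thm:spherical-gradient-infinity}, \ref{thm:convergence-sphere-infinity}, \ref{thm:no-chaos-sphere-infinity}, \ref{thm:escape-limit-critical-infinity}, and \ref{thm:compactified-invariant-measures}. Your extra checks (the hypothesis $d\in\{1,2\}$, Lemma~\ref{lem:desingularized-time-escape}, and Theorem~\ref{thm:exclusion-standard-chaos} for the closing sentence) are harmless refinements, with one small imprecision: the factor $\rho$ in the radial equation gives invariance of the equator, and hence completeness by compactness, whereas smoothness up to $\rho=0$ comes from the polynomial dependence on $\rho$.
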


\begin{proof}
A constant field has straight-line trajectories, two antipodal limiting
directions, and no recurrent dynamics.  Assume that $F$ is nonconstant.
Item~\textup{(i)} is
Corollary~\ref{cor:finite-or-infinity-dichotomy}; items~\textup{(ii)} and
\textup{(iii)} are Theorems~\ref{thm:spherical-gradient-infinity},
\ref{thm:convergence-sphere-infinity}, and
\ref{thm:no-chaos-sphere-infinity}; item~\textup{(iv)} is
Theorem~\ref{thm:escape-limit-critical-infinity}; and item~\textup{(v)} is
Theorem~\ref{thm:compactified-invariant-measures}.
\end{proof}



\section{Cubic Potentials and Invariant Algebraic Surfaces}
\label{sec:gradient-darboux}

The preceding sections solve the Sprott--Zeraoulia problem through the
global gradient structure forced by symmetry of the Jacobian, first for
bounded dynamics and then, in Section~\ref{sec:infinity}, for the
dynamics at infinity.  We now relate this approach to the
Darboux-theoretic method developed in
\cite{MessiasSilva2018,MessiasSilva2020,MessiasSilva2022,SilvaMessias2026}.
The leading homogeneous parts of Darboux polynomials determine invariant
algebraic traces on the sphere at infinity, so the same algebraic
skeleton organizes both finite and escaping dynamics.  The purpose of
this section is to make that relation precise and to show how the global
gradient mechanism complements the earlier algebraic criteria.

The main observation is simple and useful.  If
\[
 X=\nabla V
\]
and an algebraic surface $f=0$ is invariant with cofactor $K$, then the
Darboux equation becomes
\[
 \langle\nabla V,\nabla f\rangle=Kf.
\]
Thus invariant algebraic surfaces are governed directly by the cubic
potential.  The potential supplies a global ordering of trajectories,
whereas the algebraic surfaces identify distinguished geometric subsets
of phase space, such as separatrices, invariant manifolds, and possible
boundaries between basins.

\subsection{The Darboux equation in gradient form}

Let $X$ be a polynomial vector field on $\R^3$.  Its action on a smooth
function $f$ will be denoted by
\[
 X(f)=\langle\nabla f,X\rangle.
\]
We recall the standard definition used in the Darboux theory of
integrability; see, for example, \cite{Llibre2004}.

\begin{definition}[Invariant algebraic surface and cofactor]
\label{def:invariant-algebraic-surface}
A nonconstant polynomial $f\in\R[x,y,z]$ is a \emph{Darboux polynomial}
for $X$ if there exists a polynomial $K\in\R[x,y,z]$ such that
\begin{equation}
 X(f)=Kf.
 \label{eq:darboux-general}
\end{equation}
The algebraic set
\[
 \mathcal S_f=\{\bx\in\R^3:f(\bx)=0\}
\]
is then an invariant algebraic surface, and $K$ is called its
\emph{cofactor}.
\end{definition}

For a gradient field $X=\nabla V$, equation
\eqref{eq:darboux-general} takes the form
\begin{equation}
 \boxed{\;
 \langle\nabla V,\nabla f\rangle=Kf.
 \;}
 \label{eq:gradient-darboux}
\end{equation}
When $K$ is constant, the Darboux polynomial $f$ is an eigenfunction of
the Lie derivative $\mathcal L_{\nabla V}$ acting on polynomials.

\begin{proposition}
\label{prop:intrinsic-gradient-surface}
Let $X=\nabla V$ be a smooth gradient vector field and let
$\mathcal S_f=\{f=0\}$ be an invariant algebraic surface.  Denote by
$\mathcal S_f^{\mathrm{reg}}$ its regular part,
\[
 \mathcal S_f^{\mathrm{reg}}
 =\{\bx:f(\bx)=0,\ \nabla f(\bx)\neq0\}.
\]
Then:
\begin{enumerate}[label=\textup{(\roman*)}]
\item $\nabla V(\bx)$ is tangent to $\mathcal S_f$ at every
$\bx\in\mathcal S_f^{\mathrm{reg}}$;
\item the restriction of the ambient vector field to the surface is the
intrinsic gradient of the restricted potential,
\begin{equation}
 X|_{\mathcal S_f^{\mathrm{reg}}}
 =\operatorname{grad}_{\mathcal S_f}(V|_{\mathcal S_f^{\mathrm{reg}}});
 \label{eq:intrinsic-gradient}
\end{equation}
\item along every trajectory contained in the regular part of the
surface,
\begin{equation}
 \frac{\dd}{\dd t}V(\bx(t))
 =\left\|\operatorname{grad}_{\mathcal S_f}
        (V|_{\mathcal S_f})(\bx(t))\right\|^2.
 \label{eq:surface-monotonicity}
\end{equation}
\end{enumerate}
\end{proposition}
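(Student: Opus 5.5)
Item (i) comes directly from the gradient Darboux equation \eqref{eq:gradient-darboux}, evaluated on the surface. At a point $\bx\in\mathcal S_f^{\mathrm{reg}}$ we have $f(\bx)=0$, so
\[
 \langle\nabla V(\bx),\nabla f(\bx)\rangle=K(\bx)f(\bx)=0 .
\]
Since $\nabla f(\bx)\neq0$, the implicit function theorem makes $\mathcal S_f^{\mathrm{reg}}$ a smooth embedded surface near $\bx$, with tangent plane
\[
 T_{\bx}\mathcal S_f=\nabla f(\bx)^{\perp}.
\]
Hence $\nabla V(\bx)\in T_{\bx}\mathcal S_f$, which proves (i). I would also note that any solution starting on $\mathcal S_f^{\mathrm{reg}}$ stays on $\mathcal S_f$ for as long as it remains regular. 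Indeed, $g(t)=f(\bx(t))$ satisfies the linear equation $\dot g=K(\bx(t))\,g$ with $g(0)=0$, so $g\equiv0$ by uniqueness. This justifies speaking of trajectories contained in the regular part in (iii).

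For item (ii), I would equip $\mathcal S_f^{\mathrm{reg}}$ with the Riemannian metric induced from the Euclidean one. I would then use the standard fact, already used for the sphere in the proof of Theorem~\ref{thm:spherical-gradient-infinity}, that the intrinsic gradient of a restricted function is the tangential projection of the ambient gradient:
\[
 \operatorname{grad}_{\mathcal S_f}(V|_{\mathcal S_f})(\bx)=P_{\bx}\nabla V(\bx),
 \qquad
 P_{\bx}=I-\frac{\nabla f(\bx)\nabla f(\bx)^{T}}{\|\nabla f(\bx)\|^2}.
\]
This identity follows by checking the defining property. For every tangent vector $v$, the derivative $\dd(V|_{\mathcal S_f})_{\bx}(v)$ equals $\langle\nabla V(\bx),v\rangle$, which in turn equals $\langle P_{\bx}\nabla V(\bx),v\rangle$, and $P_{\bx}\nabla V(\bx)$ is itself tangent. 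By (i), $\nabla V(\bx)$ is already tangent, so $P_{\bx}\nabla V(\bx)=\nabla V(\bx)=X(\bx)$, giving \eqref{eq:intrinsic-gradient}.

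Item (iii) then follows from Corollary~\ref{cor:monotonicity}. Along a trajectory in $\mathcal S_f^{\mathrm{reg}}$,
\[
 \frac{\dd}{\dd t}V(\bx(t))=\|\nabla V(\bx(t))\|^2,
\]
and by (ii) the vector $\nabla V(\bx(t))$ coincides with the intrinsic gradient. The Euclidean norm of a tangent vector equals its norm in the induced metric, so this gives \eqref{eq:surface-monotonicity}.

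There is no serious obstacle here. The only point needing care is the identification of the intrinsic gradient with the tangential projection, and in particular stating clearly that the induced metric is used. The invariance remark is also worth making, because it shows that the statement about trajectories in (iii) is meaningful.
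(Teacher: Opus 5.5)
Your proposal is correct and follows the paper's proof essentially step for step: (i) from the Darboux identity on $f=0$ together with $T_{\bx}\mathcal S_f=\nabla f(\bx)^{\perp}$, (ii) from the fact that the intrinsic gradient is the tangential projection of an ambient gradient that is already tangent, and (iii) from the monotonicity identity combined with (ii). Your extra invariance remark via $\dot g=K(\bx(t))\,g$ is a welcome justification that the paper leaves implicit; note that it keeps the orbit on $\mathcal S_f$ for its whole interval of existence, not only while the orbit stays regular.
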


\begin{proof}
On $\mathcal S_f$, equation \eqref{eq:gradient-darboux} gives
\[
 \langle\nabla f,\nabla V\rangle=0.
\]
At a regular point, $\nabla f$ is normal to the surface.  Hence
$\nabla V$ is tangent to it, proving~\textup{(i)}.  The intrinsic
gradient of $V|_{\mathcal S_f}$ is the orthogonal projection of the
ambient gradient onto the tangent space.  Since the ambient gradient is
already tangent, this projection is $\nabla V$ itself, proving
\textup{(ii)}.  Finally,
\[
 \frac{\dd}{\dd t}V(\bx(t))
 =\langle\nabla V,\dot\bx\rangle
 =\|\nabla V\|^2,
\]
and \eqref{eq:intrinsic-gradient} yields~\textup{(iii)}.
\end{proof}

Equation \eqref{eq:gradient-darboux} also imposes a compatibility
relation between the Hessian of the potential and the geometry of the
invariant surface.

\begin{proposition}
\label{prop:hessian-cofactor-compatibility}
Let $f$ be a Darboux polynomial with cofactor $K$ for the gradient field
$X=\nabla V$.  Then
\begin{equation}
 \Hess V\,\nabla f+\Hess f\,\nabla V
 =f\nabla K+K\nabla f.
 \label{eq:hessian-cofactor-global}
\end{equation}
Consequently, on the invariant surface $\mathcal S_f$,
\begin{equation}
 \Hess V\,\nabla f+\Hess f\,\nabla V
 =K\nabla f.
 \label{eq:hessian-cofactor-surface}
\end{equation}
If $p\in\mathcal S_f^{\mathrm{reg}}$ is an equilibrium, then
\begin{equation}
 \Hess V(p)\nabla f(p)=K(p)\nabla f(p).
 \label{eq:cofactor-normal-eigenvalue}
\end{equation}
Thus $K(p)$ is the eigenvalue of the linearization in the normal
direction to the invariant surface at $p$.
\end{proposition}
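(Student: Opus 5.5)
The identity \eqref{eq:hessian-cofactor-global} will come from differentiating the gradient Darboux equation \eqref{eq:gradient-darboux}, $\langle\nabla V,\nabla f\rangle=Kf$, with respect to each coordinate $x_j$. The left side is $\sum_i \partial_iV\,\partial_if$. The product rule gives
\[
 \partial_j\sum_i \partial_iV\,\partial_if
 =\sum_i \partial_{ij}V\,\partial_if+\sum_i \partial_iV\,\partial_{ij}f,
\]
which is the $j$-th component of $\Hess V\,\nabla f+\Hess f\,\nabla V$. Here I use that both Hessians are symmetric, so no transpose issues arise. The right side differentiates to $f\,\partial_jK+K\,\partial_jf$. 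Collecting the components over $j$ yields \eqref{eq:hessian-cofactor-global} as a polynomial identity on all of $\R^3$.

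Next I restrict to the surface $\mathcal S_f$. There $f=0$, so the term $f\nabla K$ vanishes, and this gives \eqref{eq:hessian-cofactor-surface} directly. At an equilibrium $p\in\mathcal S_f^{\mathrm{reg}}$ we also have $\nabla V(p)=0$, so the term $\Hess f(p)\nabla V(p)$ drops out as well, which leaves \eqref{eq:cofactor-normal-eigenvalue}.

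For the interpretation, I use that $p$ is regular, so $\nabla f(p)\neq0$ and it is a genuine eigenvector of $\Hess V(p)=DF(p)$ with eigenvalue $K(p)$. By Proposition~\ref{prop:intrinsic-gradient-surface}, the field is tangent to the regular part of the surface, so the linearization preserves $T_p\mathcal S_f$. Because $\Hess V(p)$ is symmetric, it also preserves the orthogonal complement of $T_p\mathcal S_f$, and that complement is spanned by $\nabla f(p)$. So $K(p)$ is exactly the eigenvalue of the linearization in the normal direction.

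There is no real obstacle here. The only care needed is the index bookkeeping in the product rule, and noting that symmetry of $\Hess f$ lets us write $\sum_i\partial_iV\,\partial_{ij}f$ as $(\Hess f\,\nabla V)_j$.
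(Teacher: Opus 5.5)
Your proposal is correct and matches the paper's proof: differentiate $\langle\nabla V,\nabla f\rangle=Kf$, then use $f=0$ on $\mathcal S_f$ and $\nabla V(p)=0$ at the equilibrium. Your last paragraph on invariance of $T_p\mathcal S_f$ is not needed for the eigenvalue claim, since that already follows from \eqref{eq:cofactor-normal-eigenvalue} together with $\nabla f(p)\neq0$. It is essentially the paper's Corollary~\ref{cor:tangent-normal-splitting} argued in the reverse order: the paper goes from the normal line to the tangent plane, you go from the tangent plane to the normal line.
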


\begin{proof}
Taking the gradient of
$\langle\nabla V,\nabla f\rangle=Kf$ gives
\[
 \Hess V\,\nabla f+\Hess f\,\nabla V
 =f\nabla K+K\nabla f,
\]
which is \eqref{eq:hessian-cofactor-global}.  Restriction to $f=0$
gives \eqref{eq:hessian-cofactor-surface}.  At an equilibrium $p$ we
have $\nabla V(p)=0$, and
\eqref{eq:cofactor-normal-eigenvalue} follows.
\end{proof}

\begin{corollary}\label{cor:tangent-normal-splitting}
Under the hypotheses of
Proposition~\ref{prop:hessian-cofactor-compatibility}, let
$p\in\mathcal S_f^{\mathrm{reg}}$ be an equilibrium.  Then the normal
line $\operatorname{span}\{\nabla f(p)\}$ and the tangent plane
$T_p\mathcal S_f$ are invariant under $DF(p)=\Hess V(p)$.  The normal
eigenvalue is $K(p)$.
\end{corollary}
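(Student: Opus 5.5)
The plan is to deduce both invariance claims from equation \eqref{eq:cofactor-normal-eigenvalue} of Proposition~\ref{prop:hessian-cofactor-compatibility}, combined with the symmetry of $\Hess V(p)$. The argument is pure linear algebra at the point $p$, and I will carry it out in three steps.

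\emph{Step one: the normal line.} Set $n=\nabla f(p)$, which is nonzero because $p$ is a regular point of $\mathcal S_f$. Equation \eqref{eq:cofactor-normal-eigenvalue} reads $\Hess V(p)\,n=K(p)\,n$. Hence $n$ is an eigenvector of $DF(p)=\Hess V(p)$ with eigenvalue $K(p)$. In particular, the line $\operatorname{span}\{n\}$ is invariant, and the eigenvalue associated with it is $K(p)$.

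\emph{Step two: the tangent plane.} At a regular point the tangent plane is $T_p\mathcal S_f=n^{\perp}$. Take any $v\in n^\perp$ and write $H=\Hess V(p)$. Since $H$ is symmetric by Theorem~\ref{thm:symmetric-gradient}, we get
\[
\langle Hv,n\rangle=\langle v,Hn\rangle=K(p)\langle v,n\rangle=0 .
\]
Thus $Hv\in n^\perp$, so the tangent plane is invariant under $DF(p)$.

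\emph{Step three: the resulting decomposition.} Steps one and two together give the orthogonal $H$-invariant splitting $\R^3=\operatorname{span}\{n\}\oplus T_p\mathcal S_f$.

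I do not expect a genuine obstacle. The only point requiring care is to use the regularity hypothesis $\nabla f(p)\neq0$, which is what makes $n$ a genuine eigenvector and identifies $T_p\mathcal S_f$ with $n^\perp$. Symmetry of the Hessian is the essential ingredient in step two. For a non-gradient field, invariance of the normal line would not by itself give invariance of the tangent plane.
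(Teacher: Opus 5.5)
Your proposal is correct and follows the paper's proof. Invariance of the normal line comes directly from \eqref{eq:cofactor-normal-eigenvalue}, and symmetry of $\Hess V(p)$ makes the orthogonal complement $T_p\mathcal S_f$ invariant; you verify that second step explicitly via $\langle Hv,n\rangle=\langle v,Hn\rangle=0$, which the paper states in one line.
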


\begin{proof}
The normal line is invariant by
\eqref{eq:cofactor-normal-eigenvalue}.  Since $\Hess V(p)$ is symmetric,
the orthogonal complement of an invariant eigendirection is invariant.
This orthogonal complement is precisely $T_p\mathcal S_f$.
\end{proof}

\subsection{Constant cofactors and the criterion of 2018}

The algebraic criterion introduced in \cite{MessiasSilva2018} uses an
invariant algebraic surface with a nonzero constant cofactor.  In the
gradient setting, its dynamical meaning can be stated very explicitly.

\begin{proposition}\label{prop:constant-cofactor-evolution}
Suppose that $f$ satisfies
\begin{equation}
 \langle\nabla V,\nabla f\rangle=kf,
 \qquad k\in\R.
 \label{eq:constant-cofactor}
\end{equation}
Then every trajectory for which the flow is defined satisfies
\begin{equation}
 f(\phi_t(\bx))=e^{kt}f(\bx).
 \label{eq:exact-f-evolution}
\end{equation}
Equivalently,
\begin{equation}
 I(\bx,t)=f(\bx)e^{-kt}
 \label{eq:darboux-invariant-constant}
\end{equation}
is a Darboux invariant.

If $k\neq0$, the following conclusions hold.
\begin{enumerate}[label=\textup{(\roman*)}]
\item Every bounded forward trajectory has
\begin{equation}
 \omega(\bx)
 \subset\mathcal S_f.
 \label{eq:omega-in-constant-surface}
\end{equation}
\item If $k>0$, every bounded forward trajectory is entirely contained
in $\mathcal S_f$.
\item If $k<0$, every forward trajectory satisfies
\[
 |f(\phi_t(\bx))|=e^{kt}|f(\bx)|\longrightarrow0
\]
as long as it exists for all $t\geq0$.
\item Every compact invariant set is contained in $\mathcal S_f$.
\end{enumerate}
\end{proposition}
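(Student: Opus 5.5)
The plan is to first establish the exact evolution law \eqref{eq:exact-f-evolution} and then read off each of (i)--(iv) from it, using boundedness, continuity of $f$, and (for (iv)) two-sided completeness of the flow on a compact invariant set.

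\textbf{The evolution law.} Along a solution $\bx(t)=\phi_t(\bx)$, set $g(t)=f(\bx(t))$. The chain rule and \eqref{eq:constant-cofactor} give
\[
g'(t)=\langle\nabla f(\bx(t)),\nabla V(\bx(t))\rangle=k\,g(t).
\]
This is a scalar linear ODE, so uniqueness forces $g(t)=e^{kt}g(0)$ on the whole interval of existence, which is \eqref{eq:exact-f-evolution}. Differentiating $I(\bx(t),t)=f(\bx(t))e^{-kt}$ gives $(kf-kf)e^{-kt}=0$, so $I$ is a Darboux invariant.

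\textbf{Items (i)--(iii).} Assume $k\neq0$ and let the forward trajectory be bounded. By Lemma~\ref{lem:bounded-global} it is global, and $f$ is bounded on its closure.
\begin{itemize}[label=\textup{--}]
\item \emph{(ii), $k>0$.} The bound $|f(\bx)|e^{kt}\le M$ for all $t\ge0$ forces $f(\bx)=0$. Then $f(\phi_t(\bx))=0$ for all $t$ in the interval of existence, so the whole trajectory lies in $\mathcal S_f$.
\item \emph{(iii), $k<0$.} The decay $|f(\phi_t(\bx))|=e^{kt}|f(\bx)|\to0$ is immediate from the formula whenever the solution is forward complete.
\item \emph{(i).} Let $y\in\omega(\bx)$ with $\phi_{t_n}(\bx)\to y$. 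By continuity of $f$, $f(y)=\lim_n e^{kt_n}f(\bx)$. If $k<0$ this limit is $0$. If $k>0$, item (ii) gives $f(\bx)=0$, so again $f(y)=0$. Hence $\omega(\bx)\subset\mathcal S_f$.
\end{itemize}

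\textbf{Item (iv).} Let $K$ be compact and invariant. The flow is complete on $K$ in both time directions, and $|f|\le M$ on $K$. For $\bx\in K$, \eqref{eq:exact-f-evolution} holds for all $t\in\R$, so $|f(\bx)|\le Me^{-kt}$ for every real $t$.
\begin{itemize}[label=\textup{--}]
\item If $k>0$, let $t\to+\infty$.
\item If $k<0$, let $t\to-\infty$.
\end{itemize}
In either case $f(\bx)=0$, so $K\subset\mathcal S_f$.

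\textbf{Main subtlety.} The point needing care is (iv) when $k<0$. Forward boundedness alone only puts the omega-limit set on the surface; it does not put $\bx$ itself there. The argument must therefore use backward invariance of $K$, that is, the validity of the formula for negative times on $K$. Everything else reduces to the scalar ODE and continuity of $f$.
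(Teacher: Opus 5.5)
Your proposal is correct and follows essentially the same route as the paper. Both solve the scalar linear equation for $f(\phi_t(\bx))$, force $f(\bx)=0$ for $k>0$ from boundedness of $f$ on the orbit closure, use exponential decay for $k<0$, and handle compact invariant sets by the forward argument when $k>0$ and the backward-time argument when $k<0$. Your treatment of item (i) is just slightly more explicit, since the paper covers the $k>0$ case there only implicitly through item (ii).
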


\begin{proof}
Along a trajectory,
\[
 \frac{\dd}{\dd t}f(\phi_t(\bx))
 =k f(\phi_t(\bx)).
\]
Solving this scalar linear equation gives
\eqref{eq:exact-f-evolution} and
\eqref{eq:darboux-invariant-constant}.

Assume first that $k<0$.  Then
$f(\phi_t(\bx))\to0$, so every finite accumulation point of the
trajectory lies in $\mathcal S_f$.  If $k>0$ and the trajectory is
bounded, the polynomial $f$ remains bounded on its closure.  Equation
\eqref{eq:exact-f-evolution} then forces $f(\bx)=0$, and invariance gives
$f(\phi_t(\bx))=0$ for all $t$.

Finally, let $K_0$ be a compact invariant set and $\bx\in K_0$.  The
orbit through $\bx$ is complete and remains in $K_0$.  If $k>0$, forward
boundedness gives $f(\bx)=0$.  If $k<0$, apply the same argument in
negative time.  Hence $K_0\subset\mathcal S_f$.
\end{proof}

\begin{corollary}
\label{cor:gradient-refinement-2018}
Let $X=\nabla V$ be a polynomial gradient vector field with an invariant
algebraic surface having nonzero constant cofactor.  Every bounded
forward trajectory converges to a single equilibrium $p$ satisfying
\begin{equation}
 p\in\operatorname{Crit}(V)\cap\mathcal S_f.
 \label{eq:limit-critical-surface}
\end{equation}
Moreover, every compact invariant set is contained in
$\mathcal S_f$ and its recurrent part consists only of equilibria on
that surface.
\end{corollary}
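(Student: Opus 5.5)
The argument combines the convergence theorem of Section~\ref{sec:dynamical-gradient} with Proposition~\ref{prop:constant-cofactor-evolution}, in three steps.

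\emph{Step 1: convergence.} Let $\bx(t)=\phi_t(\bx_0)$ be a bounded forward trajectory. Since $V$ is a polynomial, hence real analytic, Theorem~\ref{thm:analytic-gradient-convergence} gives a single critical point $p\in\operatorname{Crit}(V)$ with $\bx(t)\to p$. Then $\omega(\bx_0)=\{p\}$.

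\emph{Step 2: the limit lies on the surface.} Item~(i) of Proposition~\ref{prop:constant-cofactor-evolution} yields $\omega(\bx_0)\subset\mathcal S_f$, so $p\in\operatorname{Crit}(V)\cap\mathcal S_f$. This can also be checked directly from \eqref{eq:exact-f-evolution}.
\begin{itemize}
\item If $k<0$, then $f(\bx(t))=e^{kt}f(\bx_0)\to0$, and continuity of $f$ gives $f(p)=0$.
\item If $k>0$, boundedness of $f$ along the bounded orbit forces $f(\bx_0)=0$. The whole orbit then lies in the closed set $\mathcal S_f$, hence so does $p$.
\end{itemize}

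\emph{Step 3: compact invariant sets.} Item~(iv) of Proposition~\ref{prop:constant-cofactor-evolution} gives $K_0\subset\mathcal S_f$ for any compact invariant set $K_0$. For the recurrent part, the relevant results are Proposition~\ref{prop:immediate-exclusions}(ii), Proposition~\ref{prop:nonwandering-critical} and Proposition~\ref{prop:chain-recurrent-critical}. They show that the positively recurrent, nonwandering and chain recurrent points of $\phi|_{K_0}$ all lie in $K_0\cap\operatorname{Crit}(V)$. Hence they lie in $\operatorname{Crit}(V)\cap\mathcal S_f$.

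Only one point needs care: the case analysis in Step 2 must be made explicit, because for $k>0$ one cannot argue that $f\to0$ along the orbit. That case is settled by the exact identity $f(\phi_t(\bx_0))=e^{kt}f(\bx_0)$ together with boundedness, as above. Everything else is direct citation of earlier results.
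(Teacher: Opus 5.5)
Your proof is correct and follows essentially the same route as the paper: convergence comes from the \L{}ojasiewicz convergence theorem, localization on $\mathcal S_f$ (for limit points and for compact invariant sets) from the constant-cofactor proposition, and the description of the recurrent part from the nonwandering characterization $\operatorname{NW}(\phi|_{K})=K\cap\operatorname{Crit}(V)$. Your explicit $k<0$ versus $k>0$ case split and the extra citations covering positively recurrent and chain recurrent points are harmless refinements of the same argument.
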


\begin{proof}
Convergence to one equilibrium follows from
Theorem~\ref{thm:analytic-gradient-convergence}.  Localization on
$\mathcal S_f$ follows from
Proposition~\ref{prop:constant-cofactor-evolution}.  The statement about
compact invariant sets and recurrence follows from the same proposition
and Proposition~\ref{prop:nonwandering-critical}.
\end{proof}

\subsection{Invariant affine planes and exact splitting of the potential}

The case of an invariant plane with constant cofactor, which played a
central role in \cite{MessiasSilva2018}, admits a complete normal form
inside the symmetric-Jacobian class.

\begin{theorem}\label{thm:affine-plane-splitting}
Let $V\colon\R^3\to\R$ be a polynomial and let
\[
 \Pi=\{\bx\in\R^3:f(\bx)=0\},
 \qquad
 f(\bx)=\langle a,\bx\rangle+b,
 \qquad a\neq0,
\]
be an affine plane.  The following statements are equivalent.
\begin{enumerate}[label=\textup{(\roman*)}]
\item The plane $\Pi$ is invariant for $X=\nabla V$ with constant
cofactor $k$, that is,
\begin{equation}
 \langle\nabla f,\nabla V\rangle=kf.
 \label{eq:plane-constant-cofactor}
\end{equation}
\item After an affine orthogonal change of coordinates $(x,y,z)$ to
$(u,v,w)$ in which
\begin{equation}
 w=\frac{f(\bx)}{\|a\|},
 \label{eq:normal-coordinate}
\end{equation}
the potential has the separated form
\begin{equation}
 V(u,v,w)=W(u,v)+\frac{k}{2}w^2,
 \label{eq:potential-plane-splitting}
\end{equation}
where $W$ is a polynomial in two variables.
\item In the same coordinates, the flow is the direct product
\begin{equation}
 \begin{aligned}
  \dot u&=W_u(u,v),\\
  \dot v&=W_v(u,v),\\
  \dot w&=kw,
 \end{aligned}
\label{eq:flow-plane-splitting}
\end{equation}
and therefore
\begin{equation}
 \phi_t(u,v,w)=\bigl(\psi_t(u,v),e^{kt}w\bigr),
 \label{eq:product-flow}
\end{equation}
where $\psi_t$ is the planar gradient flow generated by $W$.
\end{enumerate}
If $V$ has degree at most three, then $W$ has degree at most three.
\end{theorem}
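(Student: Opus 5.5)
The plan is to set up orthogonal coordinates adapted to the plane and then show (i)$\Rightarrow$(ii)$\Rightarrow$(iii)$\Rightarrow$(i).

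\textbf{Coordinates.} Let $e_3=a/\|a\|$ and complete it to a positively oriented orthonormal basis $\{e_1,e_2,e_3\}$. Choose a point $\bx_*\in\Pi$, for instance $\bx_*=-b\,a/\|a\|^2$, and write
\[
 \bx=\bx_*+ue_1+ve_2+we_3 .
\]
Then $f(\bx)=\langle a,\bx\rangle+b=\|a\|w$, so $w$ agrees with \eqref{eq:normal-coordinate}. The map $(u,v,w)\mapsto\bx$ is an affine isometry. Hence the Euclidean gradient transforms covariantly: in the new coordinates the system $\dot\bx=\nabla V(\bx)$ reads $(\dot u,\dot v,\dot w)=\nabla_{(u,v,w)}\widetilde V$, where $\widetilde V$ is $V$ written in the new coordinates. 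I will also note that $\widetilde V$ is a polynomial of the same degree as $V$. From here on I write $V$ for $\widetilde V$.

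\textbf{(i)$\Rightarrow$(ii).} Since $\nabla f=a=\|a\|e_3$, we have $\langle\nabla f,\nabla V\rangle=\|a\|\,V_w$. Condition \eqref{eq:plane-constant-cofactor} therefore becomes $\|a\|V_w=k\|a\|w$, that is, $V_w=kw$ identically. Integrating in $w$ gives $V=W(u,v)+\tfrac{k}{2}w^2$, where $W(u,v):=V(u,v,0)$. Restricting a polynomial to a hyperplane yields a polynomial of no larger degree, so $\deg W\le\deg V$. This proves the degree claim.

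\textbf{(ii)$\Rightarrow$(iii).} Differentiating \eqref{eq:potential-plane-splitting} gives $\nabla V=(W_u,W_v,kw)$, which is \eqref{eq:flow-plane-splitting}. The system is decoupled. The $w$-equation integrates to $e^{kt}w$, and the $(u,v)$-equation is the planar gradient flow $\psi_t$ of $W$. Uniqueness of solutions then gives the product form \eqref{eq:product-flow} on the common interval of existence.

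\textbf{(iii)$\Rightarrow$(i).} From \eqref{eq:flow-plane-splitting}, $\dot w=kw$, so $\frac{\dd}{\dd t}f=\|a\|\dot w=kf$. Because $\frac{\dd}{\dd t}f=\langle\nabla f,\nabla V\rangle$ along trajectories, and every point lies on a trajectory, this gives \eqref{eq:plane-constant-cofactor} pointwise.

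The only step needing real care is confirming that the gradient structure survives the change of variables. This holds because the Jacobian of an orthogonal map $Q$ satisfies $Q^{T}Q=I$, so $\nabla_{\text{new}}=Q^{T}\nabla_{\text{old}}$ commutes with the transformation of velocities. A non-orthogonal affine change would introduce a metric factor and spoil the splitting. Everything else is direct computation.
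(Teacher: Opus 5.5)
Your proof is correct and follows the paper's own argument essentially step for step. You use orthonormal coordinates adapted to the plane with $f=\|a\|w$, reduce the cofactor condition to $V_w=kw$, integrate in $w$, and read off the decoupled flow. Your explicit checks that the gradient form survives the orthogonal change of variables, and that $\deg W\le\deg V$ via $W(u,v)=V(u,v,0)$, supply details the paper leaves implicit.
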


\begin{proof}
Choose a point $\bx_0\in\Pi$, let
$n=a/\|a\|$, and complete $n$ to an orthonormal basis
$\{e_1,e_2,n\}$.  Write
\[
 \bx=\bx_0+ue_1+ve_2+wn.
\]
Then $f(\bx)=\|a\|w$ and
\[
 \langle\nabla f,\nabla V\rangle
 =\|a\|\,V_w.
\]
Consequently, \eqref{eq:plane-constant-cofactor} is equivalent to
\[
 V_w=kw.
\]
Integration with respect to $w$ gives
\[
 V(u,v,w)=W(u,v)+\frac{k}{2}w^2,
\]
proving the equivalence of~\textup{(i)} and~\textup{(ii)}.  Taking the
gradient gives \eqref{eq:flow-plane-splitting}, and solving the normal
scalar equation gives \eqref{eq:product-flow}.  Conversely,
\eqref{eq:flow-plane-splitting} implies $\dot w=kw$, so the plane $w=0$ is invariant, and
therefore $\Pi$ is invariant with cofactor $k$.
\end{proof}

\begin{corollary}\label{cor:global-hessian-splitting}
Under the hypotheses of
Theorem~\ref{thm:affine-plane-splitting},
\begin{equation}
 \Hess V(u,v,w)
 =
 \begin{pmatrix}
  \Hess W(u,v)&0\\
  0&k
 \end{pmatrix}.
 \label{eq:block-hessian-plane}
\end{equation}
Hence the unit normal $n$ to the plane is an eigenvector of
$\Hess V(\bx)$ at every point of $\R^3$, with constant eigenvalue $k$.
At every equilibrium on $\Pi$, the remaining two eigenvalues are those
of $\Hess W$.
\end{corollary}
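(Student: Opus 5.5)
The plan is to differentiate the separated normal form \eqref{eq:potential-plane-splitting} from Theorem~\ref{thm:affine-plane-splitting} twice. The coordinates $(u,v,w)$ are obtained from $(x,y,z)$ by an affine orthogonal change $\bx=\bx_0+Q(u,v,w)^T$, where $Q=[e_1\ e_2\ n]$ is orthogonal. The Hessian therefore transforms by conjugation: the Euclidean Hessian at $\bx$ equals $Q\,\Hess_{(u,v,w)}V\,Q^T$. This has two consequences. Eigenvalues are unchanged by the coordinate change. The coordinate vector $(0,0,1)^T$ corresponds to $n$.

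The main step is the direct computation in the new coordinates, using $V=W(u,v)+\tfrac{k}{2}w^2$. The mixed derivatives $V_{uw}$ and $V_{vw}$ vanish, because $W$ does not depend on $w$ and the quadratic term does not depend on $u$ or $v$. We also have $V_{ww}=k$. The $(u,v)$ block is exactly $\Hess W(u,v)$. This gives the block form \eqref{eq:block-hessian-plane} at every point of $\R^3$.

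The consequences then follow. The block structure shows $\Hess V\,(0,0,1)^T=k(0,0,1)^T$, and by the conjugation remark this means $\Hess V(\bx)\,n=k\,n$ for every $\bx\in\R^3$. For the last claim, note that an equilibrium $p$ on $\Pi$ satisfies $w=0$ and $\nabla W(u,v)=0$. The characteristic polynomial of the block matrix factors as $(\lambda-k)\det(\lambda I-\Hess W)$. So, apart from the normal eigenvalue $k$, the remaining two eigenvalues of $DF(p)=\Hess V(p)$ are those of $\Hess W$ at the point $(u,v)$ of $p$. This conclusion is consistent with Corollary~\ref{cor:tangent-normal-splitting}, which gives $K(p)=k$.

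There is no substantive obstacle. The only point that needs care is stating the conjugation rule for orthogonal affine changes correctly. That rule is what justifies identifying the abstract block form with the eigenvector $n$ in the original coordinates.
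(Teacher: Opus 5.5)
Your proposal is correct and follows the route the paper intends: the corollary is stated without a separate proof, as an immediate consequence of differentiating the splitting $V=W(u,v)+\frac{k}{2}w^2$ from Theorem~\ref{thm:affine-plane-splitting} twice. Your explicit use of the orthogonal conjugation $\Hess_{\bx}V=Q\,\Hess_{(u,v,w)}V\,Q^{T}$ to carry the eigenvector $(0,0,1)^{T}$ over to $n$ makes precise a step the paper leaves unstated.
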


\begin{corollary}\label{cor:plane-equilibria-stability}
In the coordinates of Theorem~\ref{thm:affine-plane-splitting}:
\begin{enumerate}[label=\textup{(\roman*)}]
\item if $k\neq0$, the equilibria are exactly the points
$(u_*,v_*,0)$ such that $\nabla W(u_*,v_*)=0$;
\item if $k<0$, the invariant plane is exponentially attracting in the
normal direction;
\item if $k>0$, the invariant plane is exponentially repelling in the
normal direction;
\item if $k=0$, every parallel plane $w=\mathrm{constant}$ is invariant,
and each critical point of $W$ generates a line of equilibria parallel
to the normal direction.
\end{enumerate}
\end{corollary}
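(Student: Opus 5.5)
The plan is to work entirely in the orthonormal coordinates $(u,v,w)$ of Theorem~\ref{thm:affine-plane-splitting}, where the flow is the product \eqref{eq:flow-plane-splitting}. In these coordinates the equilibrium set is explicit:
\[
 \operatorname{Crit}(V)=\{(u,v,w):\nabla W(u,v)=0,\ kw=0\}.
\]
Each of the four items will be read off from the decoupled normal equation $\dot w=kw$ together with the planar equations $(\dot u,\dot v)=\nabla W$.

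For item (i), with $k\neq0$ the condition $kw=0$ forces $w=0$. The remaining conditions are exactly $\nabla W(u_*,v_*)=0$, which identifies the equilibria with the critical points of $W$ placed on the plane $w=0$, that is, on $\Pi$.

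For items (ii) and (iii), I will use the explicit solution \eqref{eq:product-flow}. By \eqref{eq:normal-coordinate}, the Euclidean distance to $\Pi$ is $|w|$, and this distance evolves as $|w(t)|=e^{kt}|w(0)|$ independently of the planar motion $\psi_t$. If $k<0$, the distance decays exponentially for every trajectory on its interval of existence, so the plane is attracting in the normal direction. If $k>0$, the distance grows exponentially, so the plane is repelling in the normal direction. To tie this to the linearization, I will note that Corollary~\ref{cor:global-hessian-splitting} gives the normal eigenvalue $k$ at every point, consistent with Corollary~\ref{cor:tangent-normal-splitting}.

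For item (iv), with $k=0$ the normal equation is $\dot w=0$, so $w$ is a first integral and every plane $w=c$ is invariant. In that case $\nabla V=(W_u,W_v,0)$ is independent of $w$. Hence if $\nabla W(u_*,v_*)=0$, the whole line $\{(u_*,v_*,w):w\in\R\}$, which is parallel to $n$, consists of equilibria.

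There is no substantive obstacle here. The only point needing care is the precise meaning of ``exponentially attracting/repelling''. I will state it as the exact identity $\operatorname{dist}(\phi_t(\bx),\Pi)=e^{kt}\operatorname{dist}(\bx,\Pi)$, valid on the maximal interval of existence, and I will remark that this identity says nothing about the tangential motion, which may escape to infinity.
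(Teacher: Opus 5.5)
Your proposal is correct and follows the paper's approach: the paper gives no separate proof and intends the corollary to be read directly off the product flow \eqref{eq:flow-plane-splitting}--\eqref{eq:product-flow}, which is exactly what you do. Your precision that $\operatorname{dist}(\phi_t(\bx),\Pi)=e^{kt}\operatorname{dist}(\bx,\Pi)$ holds only on the maximal interval of existence is worth keeping, because the tangential motion may escape to infinity in finite time.
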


\subsection{Several invariant surfaces and the criterion of 2026}

The criterion of \cite{SilvaMessias2026} extends the one-surface
construction of \cite{MessiasSilva2018} by allowing several invariant
algebraic surfaces with cofactors satisfying a linear relation.  The
underlying identity can be combined directly with the gradient
convergence theorem.

\begin{proposition}\label{prop:multiple-darboux-surfaces}
Let $f_1,\ldots,f_m$ be Darboux polynomials for a polynomial vector
field $X$, with cofactors $K_1,\ldots,K_m$.  Suppose that there exist
real constants $\lambda_1,\ldots,\lambda_m$, not all zero, and a
constant $s\neq0$ such that
\begin{equation}
 \sum_{j=1}^m\lambda_jK_j=s.
 \label{eq:cofactor-linear-relation}
\end{equation}
On every connected component of
\[
 \R^3\setminus\bigcup_{j=1}^m\mathcal S_{f_j},
\]
the function
\begin{equation}
 \Psi(\bx)=\sum_{j=1}^m\lambda_j\log|f_j(\bx)|
 \label{eq:log-darboux-function}
\end{equation}
satisfies
\begin{equation}
 \frac{\dd}{\dd t}\Psi(\phi_t(\bx))=s.
 \label{eq:log-darboux-linear-time}
\end{equation}
Equivalently,
\begin{equation}
 \exp(\Psi(\bx))e^{-st}
 =\prod_{j=1}^m|f_j(\bx)|^{\lambda_j}e^{-st}
 \label{eq:multiple-darboux-invariant}
\end{equation}
is constant along every orbit that does not lie on one of the surfaces.

If a forward trajectory is bounded, then
\begin{equation}
 \omega(\bx)
 \subset
 \bigcup_{j=1}^m\mathcal S_{f_j}.
 \label{eq:omega-multiple-surfaces}
\end{equation}
An analogous statement holds for bounded backward trajectories and
alpha-limit sets.
\end{proposition}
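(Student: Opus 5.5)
The plan has two parts. First comes the differential identity \eqref{eq:log-darboux-linear-time}, which is a direct computation. Second comes the localization \eqref{eq:omega-multiple-surfaces} of $\omega$-limit sets, which is where the work lies.

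On a connected component $\mathcal C$ of the complement of the surfaces, each $f_j$ has constant nonzero sign, so $\log|f_j|$ is smooth there. Along a trajectory that stays in $\mathcal C$,
\[
 \frac{\dd}{\dd t}\log|f_j(\phi_t(\bx))|
 =\frac{X(f_j)}{f_j}(\phi_t(\bx))
 =K_j(\phi_t(\bx)).
\]
We must first check that the trajectory does stay in $\mathcal C$. Each $\mathcal S_{f_j}$ is invariant, so by uniqueness an orbit starting off all the surfaces never reaches them in finite time, and by connectedness it remains in $\mathcal C$. Summing with the weights $\lambda_j$ and using \eqref{eq:cofactor-linear-relation} gives $\frac{\dd}{\dd t}\Psi=s$. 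Integrating yields $\Psi(\phi_t(\bx))=\Psi(\bx)+st$, which is equivalent to the constancy of \eqref{eq:multiple-darboux-invariant}.

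For the $\omega$-limit statement, take a bounded forward trajectory. If it lies on some $\mathcal S_{f_j}$, then its $\omega$-limit set lies there too, since the surface is closed. Otherwise $\Psi(\phi_t(\bx))=\Psi(\bx)+st\to\pm\infty$ as $t\to+\infty$ because $s\neq0$, so the trajectory is forward complete by Lemma~\ref{lem:bounded-global}.

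Now suppose, for contradiction, that some $\bm{y}\in\omega(\bx)$ lies off every surface. Then $\Psi$ is continuous at $\bm{y}$ with a finite value. Since $\phi_{t_n}(\bx)\to\bm{y}$, we get $\Psi(\phi_{t_n}(\bx))\to\Psi(\bm{y})$, contradicting the divergence of $\Psi$ along the orbit. Hence \eqref{eq:omega-multiple-surfaces} holds.

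The main subtlety is that this uses only finiteness of $\Psi(\bm{y})$, not boundedness of $\Psi$ on the closure of the orbit. That is why the argument works even though the $|f_j|$ may tend to $0$ or stay large with mixed signs of $\lambda_j$. Continuity of $\Psi$ on the open set $\R^3\setminus\bigcup_j\mathcal S_{f_j}$ is all that is needed, so I do not expect a genuine obstacle.

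For bounded backward trajectories, apply the same argument as $t\to-\infty$, where $\Psi\to\mp\infty$, to obtain the analogous inclusion for $\alpha$-limit sets.
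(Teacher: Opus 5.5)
Your proof is correct and follows essentially the same route as the paper: compute $X(\log|f_j|)=K_j$ off the surfaces to get $\Psi(\phi_t(\bx))=\Psi(\bx)+st$, then contradict the finiteness of $\Psi$ at a putative $\omega$-limit point lying off all the surfaces. The differences are cosmetic. You justify non-crossing of the surfaces by invariance plus uniqueness rather than by the paper's explicit formula $f_j(\phi_t(\bx))=f_j(\bx)\exp\bigl(\int_0^tK_j(\phi_r(\bx))\dd r\bigr)$, and you treat the on-surface case explicitly (note that forward completeness comes from boundedness via Lemma~\ref{lem:bounded-global}, not from the divergence of $\Psi$ as your ``so'' suggests).
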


\begin{proof}
Where all $f_j$ are nonzero,
\[
 X(\log|f_j|)=\frac{X(f_j)}{f_j}=K_j.
\]
Therefore
\[
 X(\Psi)=\sum_{j=1}^m\lambda_jK_j=s,
\]
which proves \eqref{eq:log-darboux-linear-time} and
\eqref{eq:multiple-darboux-invariant}.

Each invariant surface cannot be crossed by an orbit starting outside
it, because along such an orbit
\[
 f_j(\phi_t(\bx))
 =f_j(\bx)\exp\left(\int_0^tK_j(\phi_r(\bx))\dd r\right).
\]
Suppose that a bounded forward trajectory has a point
$q\in\omega(\bx)$ outside the union of the surfaces.  There exists a
sequence $t_n\to\infty$ such that $\phi_{t_n}(\bx)\to q$.  By continuity,
$\Psi(\phi_{t_n}(\bx))\to\Psi(q)$, which is finite.  On the other hand,
\eqref{eq:log-darboux-linear-time} gives
\[
 \Psi(\phi_{t_n}(\bx))=\Psi(\bx)+st_n,
\]
whose absolute value tends to infinity.  This contradiction proves
\eqref{eq:omega-multiple-surfaces}.  The backward statement is
analogous.
\end{proof}

\begin{corollary}\label{cor:gradient-multiple-localization}
Let $X=\nabla V$ be a polynomial gradient field and suppose that the
hypotheses of Proposition~\ref{prop:multiple-darboux-surfaces} hold.
Then every bounded forward trajectory converges to one equilibrium $p$
and
\begin{equation}
 p\in
 \operatorname{Crit}(V)
 \cap
 \bigcup_{j=1}^m\mathcal S_{f_j}.
 \label{eq:critical-point-multiple-surfaces}
\end{equation}
\end{corollary}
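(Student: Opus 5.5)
The plan is to combine two earlier results, mirroring the proof of Corollary~\ref{cor:gradient-refinement-2018}. Let $\bx(t)=\phi_t(\bx_0)$ be a bounded forward trajectory of $X=\nabla V$.

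\textbf{Convergence.} Since $V$ is a polynomial, it is real analytic. Theorem~\ref{thm:analytic-gradient-convergence} therefore gives a single critical point $p\in\operatorname{Crit}(V)$ with $\bx(t)\to p$ as $t\to+\infty$. The trajectory is thus bounded, global by Lemma~\ref{lem:bounded-global}, and convergent.

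\textbf{Localization.} Next I must show that $p$ lies on one of the surfaces. Because the trajectory converges, $\omega(\bx_0)=\{p\}$. Proposition~\ref{prop:multiple-darboux-surfaces} applies to bounded forward trajectories of any polynomial field satisfying the cofactor relation \eqref{eq:cofactor-linear-relation}, in particular to $X=\nabla V$. It yields $\omega(\bx_0)\subset\bigcup_j\mathcal S_{f_j}$, hence $p\in\bigcup_j\mathcal S_{f_j}$. Combining this with $p\in\operatorname{Crit}(V)$ gives \eqref{eq:critical-point-multiple-surfaces}.

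\textbf{The one delicate case.} The only point needing care is the possibility that the trajectory itself lies on one of the surfaces. In that case $\Psi$ is undefined along the orbit, and the contradiction argument of Proposition~\ref{prop:multiple-darboux-surfaces} does not apply directly. The fix is immediate. If $\bx_0\in\mathcal S_{f_j}$ for some $j$, invariance of $\mathcal S_{f_j}$ keeps the whole forward orbit on it. Since $\mathcal S_{f_j}$ is closed, the limit $p$ also lies on it.

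Otherwise $\bx_0$ lies off every surface. Then the identity $f_j(\phi_t(\bx_0))=f_j(\bx_0)\exp\bigl(\int_0^tK_j\bigr)$ shows that the orbit never meets any surface. The argument of Proposition~\ref{prop:multiple-darboux-surfaces} then applies verbatim: a limit point $q=p$ off the union would give the finite limit $\Psi(\bx(t_n))\to\Psi(p)$. This contradicts $\Psi(\bx(t_n))=\Psi(\bx_0)+st_n$ with $s\neq0$. I expect no real obstacle beyond this case split.
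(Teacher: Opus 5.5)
Your proof is correct and follows the paper's route: Theorem~\ref{thm:analytic-gradient-convergence} gives $\omega(\bx_0)=\{p\}$ with $p\in\operatorname{Crit}(V)$, and Proposition~\ref{prop:multiple-darboux-surfaces} places this omega-limit set in $\bigcup_j\mathcal S_{f_j}$. Your case split for orbits starting on some $\mathcal S_{f_j}$ is a sound detail that the paper's proof of the proposition leaves implicit: there $\Psi$ is undefined, and you use invariance and closedness of the surface instead.
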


\begin{proof}
The gradient convergence theorem gives
$\omega(\bx)=\{p\}$ with $p\in\operatorname{Crit}(V)$.  Proposition
\ref{prop:multiple-darboux-surfaces} places the same omega-limit set in
the union of the invariant surfaces.
\end{proof}

\subsection{Darboux traces on the sphere at infinity}
\label{subsec:darboux-infinity}

The Darboux structure has a natural projective trace at infinity.  Since
the compactified boundary dynamics was derived in
Section~\ref{sec:infinity}, we can now identify how the highest
homogeneous part of a Darboux polynomial restricts to an invariant
algebraic trace on the sphere at infinity.

Let $X=\nabla V$ have exact degree $d\geq1$, and write
\[
 V=V_0+V_1+\cdots+V_{d+1},
 \qquad
 F_d=\nabla V_{d+1},
\]
where the subscript denotes homogeneous degree.  Let $f$ be a Darboux
polynomial of degree $m$ with cofactor $K$.  Since
$\deg X(f)\leq m+d-1$, one has $\deg K\leq d-1$.  Denote the highest
homogeneous parts by $f_m$ and $K_{d-1}$, allowing
$K_{d-1}\equiv0$ when $\deg K<d-1$.

\begin{proposition}\label{prop:darboux-trace-infinity}
Put
\[
 W=V_{d+1}|_{\mathbb S^2},
 \qquad
 g=f_m|_{\mathbb S^2}.
\]
Then, along the spherical gradient field generated by $W$,
\begin{equation}
 \left\langle\gradS W,\gradS g\right\rangle
 =\left(K_{d-1}|_{\mathbb S^2}-m(d+1)W\right)g.
 \label{eq:spherical-darboux-equation}
\end{equation}
Consequently, the algebraic trace
\begin{equation}
 \mathcal S_{f,\infty}
 =\{u\in\mathbb S^2:f_m(u)=0\}
 \label{eq:darboux-trace-set}
\end{equation}
is invariant under the compactified flow at infinity.
\end{proposition}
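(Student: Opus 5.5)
The plan is to extract the top homogeneous component of the Darboux identity, then translate it into spherical gradients using tangential projection and Euler's identity, and finally read off invariance of the zero set of $g$ from a scalar linear equation along boundary trajectories.

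\emph{Step 1: top-degree Darboux identity.} Decompose both sides of \eqref{eq:gradient-darboux} into homogeneous components. Since $\nabla V=\sum_{j}\nabla V_{j+1}$ and $\nabla f=\sum_{i\le m}\nabla f_i$, the left-hand side has degree at most $d+m-1$. Its component of exact degree $d+m-1$ is $\langle\nabla V_{d+1},\nabla f_m\rangle$. On the right-hand side, $K=\sum_{j\le d-1}K_j$ and $f=\sum_{i\le m}f_i$, so the degree-$(d+m-1)$ component of $Kf$ is $K_{d-1}f_m$. Comparing these components gives the homogeneous identity
\[
\langle\nabla V_{d+1},\nabla f_m\rangle=K_{d-1}f_m
\]
on all of $\R^3$. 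This holds even when $\deg K<d-1$, since then $K_{d-1}\equiv0$ and both sides simply vanish.

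\emph{Step 2: spherical gradients.} As in the proof of Theorem~\ref{thm:spherical-gradient-infinity}, the spherical gradient of the restriction of a function to $\mathbb S^2$ is the tangential projection $P_u$ of its Euclidean gradient. Combined with Euler's identity, as in Proposition~\ref{prop:equilibria-infinity}, this gives
\[
\gradS W(u)=\nabla V_{d+1}(u)-(d+1)W(u)\,u,
\qquad
\gradS g(u)=\nabla f_m(u)-m\,g(u)\,u .
\]
Expand their inner product, using $\|u\|=1$ and the Euler identities $\langle u,\nabla V_{d+1}(u)\rangle=(d+1)W(u)$ and $\langle u,\nabla f_m(u)\rangle=m\,g(u)$. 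This yields
\[
\langle\nabla V_{d+1},\nabla f_m\rangle-m(d+1)Wg-m(d+1)Wg+m(d+1)Wg .
\]
Substituting Step~1 for the first term gives \eqref{eq:spherical-darboux-equation}.

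\emph{Step 3: invariance of the trace.} Let $u(\tau)$ be a trajectory of the boundary flow, which by Theorem~\ref{thm:spherical-gradient-infinity} is $u'=\gradS W(u)$. Then
\[
\frac{\dd}{\dd\tau}g(u(\tau))=\langle\gradS g,\gradS W\rangle
=\bigl(K_{d-1}-m(d+1)W\bigr)(u(\tau))\,g(u(\tau)).
\]
This is a linear scalar equation in $g(u(\tau))$ with continuous coefficient, so
\[
g(u(\tau))=g(u(0))\exp\!\int_0^\tau\bigl(K_{d-1}-m(d+1)W\bigr)(u(\sigma))\,\dd\sigma .
\]
Hence $g$ vanishes at one time if and only if it vanishes at all times, and $\mathcal S_{f,\infty}$ is invariant. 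Since $f_m$ is a nonzero homogeneous polynomial, $g\not\equiv0$ on $\mathbb S^2$, so the trace is a genuine algebraic subset of $\mathbb S^2$. I do not expect a real obstacle anywhere in this argument. The only point requiring care is the bookkeeping in Step~1: one must check that the degree-$(d+m-1)$ component of each side is exactly the stated product, including the degenerate case $K_{d-1}\equiv0$.
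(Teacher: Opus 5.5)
Your proof is correct and follows the paper's argument essentially step for step. You extract the degree-$(m+d-1)$ component of $\langle\nabla V,\nabla f\rangle=Kf$, split $\nabla V_{d+1}(u)$ and $\nabla f_m(u)$ into tangential and radial parts using Euler's identity, and read off invariance of $\{g=0\}$ from the resulting linear equation for $g$ along boundary trajectories; your explicit exponential formula in Step~3 just spells out the invariance conclusion that the paper states in one line.
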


\begin{proof}
Taking the homogeneous part of degree $m+d-1$ in the Darboux equation
$\langle\nabla V,\nabla f\rangle=Kf$ gives
\begin{equation}
 \left\langle\nabla V_{d+1},\nabla f_m\right\rangle
 =K_{d-1}f_m.
 \label{eq:leading-darboux-equation}
\end{equation}
For $u\in\mathbb S^2$, homogeneity and the orthogonal decomposition into
radial and tangential parts yield
\[
 \nabla V_{d+1}(u)=\gradS W(u)+(d+1)W(u)u
\]
and
\[
 \nabla f_m(u)=\gradS g(u)+m g(u)u.
\]
The tangential and radial components are orthogonal.  Substitution in
\eqref{eq:leading-darboux-equation} gives
\eqref{eq:spherical-darboux-equation}.  Since the derivative of $g$
along the boundary flow is the left-hand side of
\eqref{eq:spherical-darboux-equation}, the zero set of $g$ is invariant.
\end{proof}

\begin{corollary}
\label{cor:quadratic-darboux-trace}
For an exact quadratic gradient field, $d=2$ and
\begin{equation}
 \left\langle\gradS(V_3|_{\mathbb S^2}),\gradS(f_m|_{\mathbb S^2})\right\rangle
 =\left(K_1-3mV_3\right)f_m
 \quad\text{on }\mathbb S^2.
 \label{eq:quadratic-spherical-darboux}
\end{equation}
If the original cofactor is constant, then $K_1\equiv0$, and the
spherical cofactor of the trace is $-3mV_3|_{\mathbb S^2}$.
\end{corollary}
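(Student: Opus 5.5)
This is a direct specialization of Proposition~\ref{prop:darboux-trace-infinity} to exact quadratic fields, so the plan is to substitute $d=2$ and then treat the constant-cofactor case.

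\emph{Step one: the spherical Darboux identity.} An exact quadratic gradient field has $d=2$. Hence $d+1=3$ and the leading potential is $V_{d+1}=V_3$. The cofactor degree bound $\deg K\le d-1$ becomes $\deg K\le1$, so $K=K_0+K_1$ with $K_1$ linear. The factor $m(d+1)$ in \eqref{eq:spherical-darboux-equation} becomes $3m$. The identity \eqref{eq:quadratic-spherical-darboux} then follows verbatim, with $W=V_3|_{\mathbb S^2}$ and $g=f_m|_{\mathbb S^2}$. The radial decompositions $\nabla V_3(u)=\gradS W(u)+3W(u)u$ and $\nabla f_m(u)=\gradS g(u)+mg(u)u$ come from Euler's identity exactly as in that proof. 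I would re-display them only to make the coefficient $3m$ visible.

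\emph{Step two: constant cofactor.} If $K$ is a constant, its homogeneous part of degree one vanishes, so $K_1\equiv0$. I would note that this remains correct when $K$ is a nonzero constant. The leading-order equation \eqref{eq:leading-darboux-equation} compares degree $m+1$ terms. A constant cofactor contributes $K_0f_m$, which has degree $m$, so it never reaches degree $m+1$. The leading equation is therefore $\langle\nabla V_3,\nabla f_m\rangle=0$.

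\emph{Step three: the spherical cofactor.} Substituting $K_1\equiv0$ leaves the right-hand side $-3mV_3f_m$ on $\mathbb S^2$. Hence the spherical cofactor of the trace is $-3mV_3|_{\mathbb S^2}$.

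The only point needing care is the degree bookkeeping in step two. One has to make sure that the constant part of $K$ is correctly excluded from the top-degree comparison, since it is tempting to include it. Everything else is immediate substitution.
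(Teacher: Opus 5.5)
Your proposal is correct and follows the paper's route. The paper gives no separate argument: the corollary is obtained by setting $d=2$ in Proposition~\ref{prop:darboux-trace-infinity}, so that $m(d+1)=3m$ and $K_{d-1}=K_1$, and by noting that a constant cofactor has vanishing degree-one homogeneous part. Your remark that $K_0f$ enters only at degree $m$, below the compared degree $m+1$, is exactly the bookkeeping that justifies $K_1\equiv 0$.
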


\subsection{The jerk classes of 2020}

The applications to polynomial jerk equations in
\cite{MessiasSilva2020} illustrate that the Darboux criterion is much
broader than the symmetric-Jacobian condition.  In the natural
companion coordinates, a jerk equation is written as
\begin{equation}
 \dot x=y,
 \qquad
 \dot y=z,
 \qquad
 \dot z=j(x,y,z),
 \label{eq:jerk-companion}
\end{equation}
and has Jacobian
\begin{equation}
 DF(x,y,z)=
 \begin{pmatrix}
  0&1&0\\
  0&0&1\\
  j_x&j_y&j_z
 \end{pmatrix}.
 \label{eq:jerk-jacobian}
\end{equation}
This matrix cannot be symmetric, since its $(1,2)$ entry equals $1$
whereas its $(2,1)$ entry equals $0$.

Thus the natural jerk representation lies outside the
Sprott--Zeraoulia gradient class, although it may possess invariant
algebraic surfaces and Darboux invariants.  The results of
\cite{MessiasSilva2020} therefore emphasize the generality of the
algebraic method: invariant surfaces may enforce nonchaotic behavior
even when no global potential exists.  Conversely, in the
symmetric-Jacobian class, the potential excludes bounded chaos even when
no useful Darboux polynomial has yet been identified.

\subsection{Invariant surfaces, symmetry breaking, and the transition
 to chaos}

The Lorenz-like systems studied in \cite{MessiasSilva2022} possess
invariant algebraic surfaces containing stable or unstable invariant
manifolds of equilibria.  The destruction of such surfaces under
parameter perturbation can deform those manifolds and reorganize the
global phase portrait, opening a route from nonchaotic to chaotic
behavior.

The gradient formulation clarifies what this mechanism can and cannot
do inside the Sprott--Zeraoulia class.  Destroying one particular
invariant algebraic surface does not remove the strict potential if the
Jacobian remains symmetric.  Therefore it cannot, by itself, produce
bounded chaos. Indeed, the following result holds. 

\begin{proposition}\label{prop:symmetry-breaking-chaos}
Let $F_\varepsilon$ be a family of quadratic polynomial vector fields on
$\R^3$.  If, for a parameter value $\varepsilon_*$, the system generated
by $F_{\varepsilon_*}$ possesses bounded chaotic dynamics in any of the
standard senses excluded by
Theorem~\ref{thm:exclusion-standard-chaos}, then
\begin{equation}
 DF_{\varepsilon_*}(\bx)
 \neq DF_{\varepsilon_*}(\bx)^T
 \label{eq:symmetry-must-break}
\end{equation}
for at least one point $\bx\in\R^3$.
Equivalently,
\[
 \nabla\times F_{\varepsilon_*}\not\equiv0.
\]
\end{proposition}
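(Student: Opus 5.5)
The plan is a proof by contraposition that simply reads Theorem~\ref{thm:exclusion-standard-chaos} backwards. Suppose, to the contrary, that $DF_{\varepsilon_*}(\bx)=DF_{\varepsilon_*}(\bx)^T$ for every $\bx\in\R^3$. Then $F_{\varepsilon_*}$ is a polynomial vector field of degree at most two on $\R^3$ with everywhere symmetric Jacobian. By Theorem~\ref{thm:symmetric-gradient} (with $\Omega=\R^3$), or more concretely by Proposition~\ref{prop:radial-potential} and Corollary~\ref{cor:degree}, there is a polynomial $V_{\varepsilon_*}$ of degree at most three with $F_{\varepsilon_*}=\nabla V_{\varepsilon_*}$. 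The system at $\varepsilon_*$ therefore lies in the Sprott--Zeraoulia class.

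Theorem~\ref{thm:exclusion-standard-chaos} applies directly to this system. Every bounded forward trajectory converges to an equilibrium. On every compact invariant set, each of the listed notions of chaos is excluded: recurrence, transitivity and mixing, Devaney and Auslander--Yorke chaos, Li--Yorke, mean Li--Yorke and distributional chaos, positive entropy, horseshoes, and strange attractors. This contradicts the hypothesis that $F_{\varepsilon_*}$ has bounded chaotic dynamics in one of these senses. Hence symmetry must fail at some point, which is \eqref{eq:symmetry-must-break}.

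The equivalent formulation follows from the equivalence (i)$\Leftrightarrow$(ii) in Theorem~\ref{thm:symmetric-gradient}. That equivalence is pointwise: $DF(\bx)$ is symmetric exactly when $\nabla\times F(\bx)=0$, because the curl components are the differences of the off-diagonal entries. So failure of symmetry at some $\bx$ is the same as $\nabla\times F_{\varepsilon_*}\not\equiv0$.

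The only point needing care is the meaning of ``bounded chaotic dynamics.'' It has to be read as the presence of one of the excluded structures, such as a transitive or positive-entropy compact invariant set or a scrambled pair, on a compact invariant set. That is the scope of Theorem~\ref{thm:exclusion-standard-chaos}, so I will state explicitly that the hypothesis is interpreted in this way. Beyond that, no new estimate is needed.
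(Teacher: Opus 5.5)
Your proposal is correct and matches the paper's proof: both assume the Jacobian is symmetric everywhere, obtain a cubic polynomial potential, and invoke Theorem~\ref{thm:exclusion-standard-chaos} to contradict the assumed bounded chaotic dynamics. Your remark that the curl formulation follows pointwise from the equivalence \textup{(i)}$\Leftrightarrow$\textup{(ii)} of Theorem~\ref{thm:symmetric-gradient} spells out a step the paper leaves implicit.
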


\begin{proof}
If the Jacobian were symmetric everywhere, then
$F_{\varepsilon_*}=\nabla V_{\varepsilon_*}$ for a cubic polynomial
potential, and Theorem~\ref{thm:exclusion-standard-chaos} would exclude
the assumed bounded chaotic dynamics.
\end{proof}

\subsection{A unified gradient--Darboux statement}

The preceding results can be summarized in a form that displays the
complementarity between the present approach and the algebraic criteria
of \cite{MessiasSilva2018,MessiasSilva2020,MessiasSilva2022,SilvaMessias2026}.

\begin{theorem}[Gradient--Darboux phase-space structure]
\label{thm:gradient-darboux-unified}
Let $F$ be a quadratic polynomial vector field on $\R^3$ with symmetric
Jacobian, and write $F=\nabla V$ with $\deg V\leq3$.
\begin{enumerate}[label=\textup{(\roman*)}]
\item Every regular invariant algebraic surface $\mathcal S_f$ carries
the intrinsic gradient flow generated by $V|_{\mathcal S_f}$.
\item At a regular equilibrium $p\in\mathcal S_f$, the cofactor value
$K(p)$ is the normal eigenvalue of $DF(p)=\Hess V(p)$.
\item If $\mathcal S_f$ has a nonzero constant cofactor, every compact
invariant set is contained in $\mathcal S_f$, and every bounded forward
trajectory converges to an equilibrium in
$\operatorname{Crit}(V)\cap\mathcal S_f$.
\item If $\mathcal S_f$ is an affine plane with constant cofactor $k$,
the potential and flow split as
\[
 V(u,v,w)=W(u,v)+\frac{k}{2}w^2,
 \qquad
 \phi_t(u,v,w)=\bigl(\psi_t(u,v),e^{kt}w\bigr).
\]
\item If several invariant algebraic surfaces have cofactors satisfying
\eqref{eq:cofactor-linear-relation}, every bounded forward trajectory
converges to an equilibrium contained in their union.
\item The highest homogeneous part of every Darboux polynomial defines
an invariant algebraic trace on $\mathbb S^2_\infty$ through
\eqref{eq:spherical-darboux-equation}.
\end{enumerate}
Thus the potential determines the global temporal ordering and
convergence of trajectories, while the invariant algebraic surfaces
determine an explicit geometric skeleton on which the asymptotic phase
portrait is organized.
\end{theorem}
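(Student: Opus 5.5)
The plan is to treat Theorem~\ref{thm:gradient-darboux-unified} as a synthesis and verify each item by citing the corresponding earlier result. The only care needed is to check that the hypotheses of those results hold in the symmetric-Jacobian class. First, by Theorem~\ref{thm:symmetric-gradient}, Proposition~\ref{prop:radial-potential} and Corollary~\ref{cor:degree}, we write $F=\nabla V$ with $V$ a polynomial of degree at most three. In particular $V$ is real analytic, so Theorem~\ref{thm:analytic-gradient-convergence} and Corollary~\ref{cor:cubic-bounded-convergence} are available throughout.

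The items then follow one by one.
\begin{enumerate}[label=\textup{(\roman*)}]
\item This is Proposition~\ref{prop:intrinsic-gradient-surface}\textup{(ii)}. It only uses the gradient form \eqref{eq:gradient-darboux} of the Darboux equation restricted to $f=0$.
\item This is \eqref{eq:cofactor-normal-eigenvalue} from Proposition~\ref{prop:hessian-cofactor-compatibility}, together with Corollary~\ref{cor:tangent-normal-splitting}, using $DF=\Hess V$.
\item Proposition~\ref{prop:constant-cofactor-evolution}\textup{(iv)} places every compact invariant set in $\mathcal S_f$. Corollary~\ref{cor:gradient-refinement-2018} gives convergence of bounded forward trajectories to a point of $\operatorname{Crit}(V)\cap\mathcal S_f$. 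The localization there comes from \eqref{eq:omega-in-constant-surface} combined with $\omega(\bx)=\{p\}$.
\item This is Theorem~\ref{thm:affine-plane-splitting}, direction \textup{(i)}$\Rightarrow$\textup{(ii)},\textup{(iii)}. For this item the text should remind the reader that the coordinates $(u,v,w)$ are the orthogonal affine ones in which $w=f/\|a\|$.
\item This is Corollary~\ref{cor:gradient-multiple-localization}, which combines Proposition~\ref{prop:multiple-darboux-surfaces} with analytic convergence.
\item This is Proposition~\ref{prop:darboux-trace-infinity}, specialized via Corollary~\ref{cor:quadratic-darboux-trace}.
\end{enumerate}

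The main obstacle is bookkeeping of hypotheses rather than new mathematics. Item \textup{(vi)} requires $F$ to have exact degree $d\ge1$. If $F$ is affine ($d=1$), Proposition~\ref{prop:darboux-trace-infinity} still applies with $K_{d-1}=K_0$ constant. If $F$ is constant, the sphere at infinity is handled separately or excluded, as in Theorem~\ref{thm:global-compactified-sprott-zeraoulia}. I would add a sentence covering this degenerate case.

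Item \textup{(i)} should be read on the regular part $\mathcal S_f^{\mathrm{reg}}$, which is the meaning of ``regular'' in the statement.

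In item \textup{(iii)}, the convergence claim concerns bounded trajectories, so no issue with escaping orbits arises. For $k>0$, part \textup{(ii)} of Proposition~\ref{prop:constant-cofactor-evolution} shows the whole trajectory lies on $\mathcal S_f$. For $k<0$, only the limit does. Either way, the limit point lies in $\operatorname{Crit}(V)\cap\mathcal S_f$, since $\mathcal S_f$ is closed.

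With these remarks, the proof reduces to a short list of citations.
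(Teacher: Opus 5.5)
Your proposal is correct and matches the paper's proof, which likewise derives items (i)--(vi), in order, from Proposition~\ref{prop:intrinsic-gradient-surface}, Proposition~\ref{prop:hessian-cofactor-compatibility}, Corollary~\ref{cor:gradient-refinement-2018}, Theorem~\ref{thm:affine-plane-splitting}, Corollary~\ref{cor:gradient-multiple-localization}, and Proposition~\ref{prop:darboux-trace-infinity}. Your extra bookkeeping is accurate and only makes explicit what the paper leaves implicit: reading (i) on $\mathcal S_f^{\mathrm{reg}}$, separating the sign cases of $k$ in (iii), and treating the degenerate degrees $d\le 1$ in (vi).
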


\begin{proof}
Items~\textup{(i)}--\textup{(vi)} follow, respectively, from
Proposition~\ref{prop:intrinsic-gradient-surface},
Proposition~\ref{prop:hessian-cofactor-compatibility},
Corollary~\ref{cor:gradient-refinement-2018},
Theorem~\ref{thm:affine-plane-splitting},
Corollary~\ref{cor:gradient-multiple-localization}, and
Proposition~\ref{prop:darboux-trace-infinity}.
\end{proof}



\section{Conclusions and Future Perspectives}
\label{sec:conclusions}

The Sprott--Zeraoulia conjecture was originally formulated as a statement
about a particular algebraic restriction on three-dimensional quadratic
systems.  The main conclusion of the present work is that this restriction
is not merely a condition on the linearization: it places the entire vector
field inside the rigid class of polynomial gradient flows.  Once this
geometric fact is made explicit, the absence of chaotic dynamics follows
from a sequence of complementary structures.

First, the symmetry of the Jacobian provides a global polynomial potential
$V$.  Second, the identity
\[
 \frac{\dd}{\dd t}V(\bx(t))=\|\nabla V(\bx(t))\|^2
\]
orders every nonstationary trajectory.  The \L{}ojasiewicz gradient
inequality then implies that every bounded forward trajectory converges to a
single equilibrium.  Third, the Poincar\'e compactification shows that the
restriction of the compactified field to the sphere at infinity is again an
analytic gradient flow, now generated by the highest homogeneous part of the
potential.  Finally, invariant algebraic surfaces satisfy the potential
equation
\[
 \langle\nabla V,\nabla f\rangle=Kf,
\]
which connects the Darboux cofactor with the Hessian and the normal dynamics
and provides an explicit algebraic skeleton for both finite and infinite
phase-space structures.

Taken together, these results exclude the standard compact notions of chaos
considered in Section~\ref{sec:exclusion-chaos}: there are no nonstationary
recurrent points, nonconstant periodic or homoclinic orbits, transitive
compact invariant sets, Li--Yorke or distributionally scrambled sets,
positive-entropy invariant measures, Smale horseshoes, positive-entropy
symbolic subsystems, or compact strange attractors in the usual recurrent
sense.  Every invariant probability measure of the full compactified flow is
concentrated on finite or infinite equilibria.  Thus, within the precise
notions of chaos examined in this manuscript, the conjecture is verified not
only for bounded dynamics in $\R^3$ but also for the dynamics intrinsic to
the sphere at infinity.

\subsection{What is genuinely quadratic and three-dimensional?}

The original conjecture concerns quadratic fields in $\R^3$, and this setting
is responsible for the explicit $19$-parameter family, the cubic potential,
and the two-dimensional sphere at infinity.  The mechanism that excludes
chaos, however, is substantially more general.

Let $n\geq2$ and let
\[
 F\colon\R^n\longrightarrow\R^n
\]
be a polynomial vector field whose Jacobian is symmetric.  Since $\R^n$ is
simply connected, there exists a polynomial potential $V$ such that
$F=\nabla V$.  If $F$ has degree at most $d$, then $V$ has degree at most
$d+1$.  The vector space of all such fields therefore has dimension
\[
 \binom{n+d+1}{n}-1,
\]
the number of coefficients of a polynomial of degree at most $d+1$ in $n$
variables, modulo its additive constant.

The finite-space arguments of Section~\ref{sec:dynamical-gradient} use only
analyticity, the gradient identity, compactness of the relevant orbit, and
the \L{}ojasiewicz inequality.  They do not use either $n=3$ or $d=2$.
Likewise, if $F$ has exact degree $d\geq1$ and
\[
 V=V_{d+1}+V_d+\cdots+V_0
\]
is its homogeneous decomposition, the calculation of
Section~\ref{sec:infinity} extends to $\R^n$ and gives, on the sphere at
infinity $\mathbb S^{n-1}_\infty$,
\begin{equation}
 u'
 =\bigl(I-uu^T\bigr)\nabla V_{d+1}(u)
 =\operatorname{grad}_{\mathbb S^{n-1}}
   \bigl(V_{d+1}|_{\mathbb S^{n-1}}\bigr)(u).
 \label{eq:general-spherical-gradient}
\end{equation}
Thus the boundary dynamics remains gradient for arbitrary polynomial degree
and arbitrary finite dimension.

\begin{proposition}
\label{prop:dimension-degree-independent-core}
Let $n\geq2$ and let $F\colon\R^n\to\R^n$ be a nonconstant polynomial
vector field with symmetric Jacobian.  Then:
\begin{enumerate}[label=\textup{(\roman*)}]
\item $F=\nabla V$ for a polynomial potential $V$;
\item every bounded forward trajectory converges to a single equilibrium;
\item on every compact invariant subset of $\R^n$, the nonwandering set is
contained in $\operatorname{Crit}(V)$, every ergodic invariant probability
measure is a Dirac mass at an equilibrium, and every time map has zero
topological entropy;
\item the flow induced by the Poincar\'e compactification on
$\mathbb S^{n-1}_\infty$ is the analytic gradient flow in
\eqref{eq:general-spherical-gradient}; consequently, every trajectory of the
boundary flow converges to one of its equilibria and the boundary supports no
nonstationary recurrent or positive-entropy dynamics.
\end{enumerate}
\end{proposition}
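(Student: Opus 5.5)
The plan is to check, item by item, that each earlier argument used neither $n=3$ nor $d=2$, and to redo in $\R^n$ only the few computations that were written in three variables.

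For item (i), I would repeat Proposition~\ref{prop:radial-potential} verbatim in $n$ variables. The radial formula $V(\bx)=\int_0^1\langle F(t\bx),\bx\rangle\dd t$ is polynomial of degree at most $d+1$. Differentiating under the integral sign gives
\[
\partial_jV(\bx)=\int_0^1\Bigl[F_j(t\bx)+t\sum_i x_i\,\partial_jF_i(t\bx)\Bigr]\dd t .
\]
Symmetry of $DF$ lets us replace $\partial_jF_i$ by $\partial_iF_j$, so the integrand becomes $\frac{\dd}{\dd t}\bigl[tF_j(t\bx)\bigr]$ and $\partial_jV=F_j$. No appeal to simple connectivity is needed beyond this explicit construction.

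For item (ii), $V$ is real analytic, and Theorem~\ref{thm:lojasiewicz-inequality} holds in any dimension. Lemma~\ref{lem:bounded-global}, Proposition~\ref{prop:energy-balance}, Theorem~\ref{thm:omega-critical} and the proof of Theorem~\ref{thm:analytic-gradient-convergence} therefore transfer word for word. The pieces are compactness of $\Gamma$, a uniform \L{}ojasiewicz neighborhood obtained from a finite cover, the differential inequality for $e(t)^{1-\theta}$, finite length, and the Cauchy argument. None of these uses the ambient dimension.

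For item (iii), I would reproduce Proposition~\ref{prop:nonwandering-critical}, which is the strict increase of $V$ near a noncritical point. I would also reproduce Theorem~\ref{thm:invariant-measures-equilibria}: Poincaré recurrence together with Proposition~\ref{prop:immediate-exclusions}(ii) puts every invariant measure on $K\cap\operatorname{Crit}(V)$, where the time map is the identity, so ergodic measures are Dirac masses. The variational principle then gives zero entropy as in Theorem~\ref{thm:zero-entropy}. All of these are statements about compact metric spaces and a strictly monotone function, so they hold for any $n$.

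Item (iv) is the only place where genuine computation in $n$ dimensions is required. I would write $\bx=ru$ with $u\in\mathbb S^{n-1}$ and $P_u=I-uu^T$, which gives
\[
\dot r=\langle u,F(ru)\rangle,\qquad \dot u=r^{-1}P_uF(ru).
\]
Expanding $F=\sum_{j\le d}F_j$ with $F_j=\nabla V_{j+1}$ and rescaling time by $\dd\tau/\dd t=r^{d-1}$ reproduces Proposition~\ref{prop:radial-compactified-equations}. The equator $\rho=0$ is invariant, and on it the field is $P_u\nabla V_{d+1}(u)$, which is the Riemannian gradient of $V_{d+1}|_{\mathbb S^{n-1}}$ because the induced metric makes the spherical gradient the tangential projection. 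This proves \eqref{eq:general-spherical-gradient}. Convergence then follows from the \L{}ojasiewicz argument in analytic charts of the compact analytic manifold $\mathbb S^{n-1}$, exactly as in Theorem~\ref{thm:convergence-sphere-infinity}. The exclusion of nonstationary recurrence and positive entropy follows as in Theorem~\ref{thm:no-chaos-sphere-infinity}.

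The main obstacle is making the transfer of the \L{}ojasiewicz argument to the sphere rigorous. The trajectory length and the gradient norm must be measured in the spherical metric, while the inequality is local and stated in chart coordinates. In each chart the Riemannian metric is uniformly equivalent to the Euclidean one on compact pieces, so the Riemannian gradient norm and the chart gradient norm differ by bounded factors. I would absorb these factors into the constant $C$ of a uniform inequality $|W-c|^\theta\le C\|\gradS W\|$ near the compact omega-limit set. The differential inequality for $(c-W)^{1-\theta}$ then gives finite Riemannian length, and completeness of $\mathbb S^{n-1}$ gives convergence.
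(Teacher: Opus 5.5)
Your proposal is correct and follows the same route as the paper's proof. The paper uses the radial-integral potential for (i), the dimension-free \L{}ojasiewicz, recurrence, and variational-principle arguments for (ii)--(iii), and the polar/desingularized computation giving $u'=P_u\nabla V_{d+1}(u)$, followed by \L{}ojasiewicz on the compact analytic sphere, for (iv); your remark about absorbing the chart-versus-Riemannian norm factors into the constant $C$ is sound and supplies a detail the paper only asserts.
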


\begin{proof}
The global potential follows from the Poincar\'e lemma, and polynomiality and
the degree estimate follow from the radial integral construction used in
Section~\ref{sec:gradient-structure}.  Items~\textup{(ii)} and
\textup{(iii)} follow from the same \L{}ojasiewicz, recurrence, and
variational-principle arguments used in Sections~\ref{sec:dynamical-gradient}
and~\ref{sec:exclusion-chaos}; none of them depends on the dimension.  The
radial--angular decomposition and the standard desingularization give
\eqref{eq:general-spherical-gradient}.  Since
$V_{d+1}|_{\mathbb S^{n-1}}$ is analytic on a compact analytic manifold, the
\L{}ojasiewicz gradient theorem applies to the boundary flow and proves
item~\textup{(iv)}.
\end{proof}

This proposition shows that increasing the degree or the dimension does not
restore chaotic recurrence inside the symmetric-Jacobian class.  What becomes
more difficult is not the fundamental nonchaotic mechanism, but the algebraic
and geometric classification of equilibria, critical manifolds, invariant
hypersurfaces, heteroclinic connections, and escape directions.

\subsection{Polynomial fields of arbitrary degree in \texorpdfstring{$\R^3$}{R3}}

For a polynomial field of degree $d$ in $\R^3$ with symmetric Jacobian, the
potential has degree $d+1$, and the sphere at infinity is governed by
\[
 W=V_{d+1}|_{\mathbb S^2}.
\]
Therefore, the finite and boundary gradient mechanisms established here
persist without an essential change.  Several questions nevertheless become
substantially more involved as $d$ grows.

A first problem is to classify the critical set of $W$ and to determine
algebraic conditions guaranteeing that the equilibria at infinity are
isolated.  Isolation gives a unique asymptotic direction for every escaping
trajectory, whereas nonisolated critical sets require a more delicate study
of how lower-order terms approach the boundary flow.  A second problem is to
obtain quantitative escape and blow-up rates from the homogeneous expansion
of $V$.  A third is to extend the Darboux analysis of
Section~\ref{sec:gradient-darboux} to higher-degree invariant surfaces and to
understand how their leading homogeneous parts stratify the sphere at
infinity.

It would also be useful to determine normal forms for polynomial potentials
under translations, orthogonal changes of coordinates, and, when dynamically
appropriate, more general affine transformations.  In the quadratic-vector-field 
case this becomes a classification of cubic forms together with their
quadratic and linear perturbations.  For higher degree, the corresponding
problem is naturally related to the invariant theory of symmetric tensors and
to the singularity theory of polynomial functions.

\subsection{Higher-dimensional phase spaces}

For $n\geq4$, a general flow on $\mathbb S^{n-1}$ may support complicated
recurrent dynamics, and no Poincar\'e--Bendixson principle is available.
Equation~\eqref{eq:general-spherical-gradient} shows that this difficulty does
not arise from recurrence in the present class: the flow at infinity remains
gradient and hence nonchaotic.  Higher dimension instead permits more
complicated equilibrium geometry, including positive-dimensional critical
sets, Morse--Bott manifolds, higher-dimensional stable and unstable
manifolds, and large heteroclinic complexes ordered by the potential.

A systematic extension to $\R^n$ should therefore focus on the topology and
stratification of
\[
 \operatorname{Crit}(V)
 \quad\text{and}\quad
 \operatorname{Crit}_{\mathbb S^{n-1}}
 \bigl(V_{d+1}|_{\mathbb S^{n-1}}\bigr),
\]
as well as on the Morse and Conley complexes generated by their connecting
orbits.  Another natural direction is the study of invariant algebraic
hypersurfaces
\[
 f=0,
 \qquad
 \langle\nabla V,\nabla f\rangle=Kf,
\]
and the relation between the cofactor, the normal Hessian, and the topology of
the hypersurface in arbitrary dimension.

\subsection{Symmetry breaking and transitions to chaos}

The symmetric-Jacobian family may be viewed as a nonchaotic organizing
skeleton inside the space of polynomial vector fields.  A particularly
important continuation of this work is to study perturbations
\begin{equation}
 \dot\bx=\nabla V(\bx)+\varepsilon G(\bx),
 \label{eq:symmetry-breaking-perturbation}
\end{equation}
for which the antisymmetric part of the Jacobian is no longer zero.  Along a
solution of \eqref{eq:symmetry-breaking-perturbation},
\[
 \frac{\dd}{\dd t}V(\bx(t))
 =\|\nabla V(\bx(t))\|^2
  +\varepsilon\langle\nabla V(\bx(t)),G(\bx(t))\rangle,
\]
so the sign-definite monotonicity responsible for the present results may be
lost.  This formula gives a quantitative starting point for studying the
onset of periodic or recurrent behavior.

Several questions arise naturally: how large must the antisymmetric component
of $DF$ be before recurrence becomes possible?  Which local bifurcations first
appear when the symmetry is broken?  How does the destruction of an invariant
plane or another Darboux surface interact with the loss of the global
potential?  Can one derive perturbative criteria, perhaps using normal forms,
Melnikov functions, or averaging theory, that locate the first periodic,
homoclinic, or chaotic invariant sets?  These questions connect the present
work directly with the transition mechanisms investigated in
\cite{MessiasSilva2022}.

\subsection{Algebraic and computational developments}

The potential formulation also suggests a symbolic and computational program.
Given a polynomial vector field, one may first test the linear conditions
$DF=DF^T$, reconstruct $V$ by integration, compute its highest homogeneous
part, and solve the constrained critical-point equation
\[
 \nabla V_{d+1}(u)
 =(d+1)V_{d+1}(u)u,
 \qquad
 \|u\|=1,
\]
to determine the equilibria at infinity.  One may then search for Darboux
polynomials through the linear equation
\[
 \langle\nabla V,\nabla f\rangle=Kf
\]
and use their traces at infinity to refine the global phase portrait.

For the quadratic three-dimensional family, this procedure could lead to a
parameter-space stratification according to the number and type of finite and
infinite equilibria, the existence of invariant planes or higher-degree
surfaces, and the topology of their connecting orbits.  Such a classification
would complement the universal nonchaotic theorem with an explicit catalogue
of all phase-portrait structures allowed inside the $19$-parameter family.

\subsection{Final perspective}

The earlier Darboux criteria and the present gradient approach play different
but complementary roles.  Invariant algebraic surfaces provide explicit
geometric barriers, separators, and carriers of stable and unstable
manifolds.  The polynomial potential supplies the global ordering mechanism
that excludes recurrent dynamics whether or not such a surface can be found.
The Poincar\'e compactification completes this picture by showing that the
highest homogeneous part of the same potential governs the dynamics at
infinity.

The Sprott--Zeraoulia class is therefore nonchaotic for a structural reason:
it consists of analytic gradient flows in the finite phase space and induces
analytic spherical gradient flows at infinity.  This interpretation gives an analytical resolution of
the original three-dimensional quadratic problem for the compact
topological, metric, symbolic, and measure-theoretic definitions of chaos
considered here,
while opening a broader program on polynomial gradient systems, their
algebraic invariant geometry, and the mechanisms through which symmetry
breaking may create complex dynamics.

\section*{Acknowledgements}

This work was financed, in part, by the S\~ao Paulo Research Foundation (FAPESP), Brazil, Process Numbers 2023/06076-0 and 2024/15612-6.


\bibliographystyle{plainnat}
\bibliography{sprott_zeraoulia_arxiv}

\end{document}